\newtheorem{Theorem}{Theorem}[section]
\newtheorem{Lemma}[Theorem]{Lemma}
\newtheorem{Proposition}[Theorem]{Proposition}
\newtheorem{Corollary}[Theorem]{Corollary}
\theoremstyle{definition} \newtheorem{Definition}[Theorem]{Definition}
\theoremstyle{remark}\newtheorem{Remark}[Theorem]{Remark}
\theoremstyle{remark}
\newcommand{\A}{\mathop{\mathcal{A}}\nolimits}
\newcommand{\G}{\mathop{\mathcal{G}}\nolimits}
\newcommand{\F}{\mathop{\mathcal{F}}\nolimits}
\newcommand{\D}{\mathop{\mathcal{D}}\nolimits}
\newcommand{\N}{\mathop{\mathcal{N}}\nolimits}
\newcommand{\T}{\mathop{\mathcal{T}}\nolimits}
\newcommand{\E}{\mathop{\mathcal{E}}\nolimits}
\newcommand{\Ol}{\mathop{\mathcal{O}}\nolimits}
\newcommand{\V}{\mathop{\mathcal{V}}\nolimits}
\newcommand{\pr}{\mathop{\textrm{pr}}\nolimits}
\newcommand{\Hom}{\mathop{\textrm{Hom}}\nolimits}
\newcommand{\Sing}{\mathop{\textrm{Sing}}\nolimits}
\newcommand{\Res}{\mathop{\textrm{Res}}\nolimits}
\newcommand{\id}{\mathop{\textrm{id}}\nolimits}
\newcommand{\codim}{\mathop{\textrm{Codim}}\nolimits}
\begin{document}
\title{Partial holomorphic connections and extension of foliations}
\subjclass[2000]{Primary 37F75; Secondary 32S65 32A27}
\keywords{Holomorphic foliations, Singularities of holomorphic foliations, Index theorems, Extension
of foliations, Atiyah sheaf, Localization of characteristic classes, Partial holomorphic foliations}
\author{Isaia Nisoli}
\address{Dipartimento di Matematica, Largo B. Pontecorvo 5, I-56127 Pisa \\ Office +39 050 221 3237
Fax +39 050 221 3224}
\email{nisoli@mail.dm.unipi.it}

\begin{abstract}
This paper stresses the strong link between the existence of partial holomorphic
connections on the normal bundle of a foliation seen as a quotient of the ambient tangent bundle and
the extendability
of a
foliation to an infinitesimal neighborhood of
a submanifold. We find the obstructions to extendability and thanks to the theory developed we
obtain some new
Khanedani-Lehmann-Suwa type index theorems.
\end{abstract}

\maketitle

\section*{Introduction}
Localization of characteristic classes is an important tool in differential geometry, topology and dynamics in
particular for complex dynamical systems  \cite{CamSadAnn}.
In this context many different indexes have been developed during the years: among them the Baum-Bott and the
Camacho-Sad indexes. A global framework for this theory has been provided by Suwa and Lehmann \cite{Suwa}: the
fundamental principle is that the
existence of a flat partial holomorphic connection (called a holomorphic action in \cite{Suwa}) implies the
vanishing of the Chern classes associated to some vector bundles.
Suppose we are working on a compact manifold $M$ and we have a partial holomorphic connection outside an analytic
subset $\Sigma$ of $M$. We can localize these Chern classes to $\Sigma$ and, using Poincar\'{e} and Alexander
duality, define the residue of the characteristic class at $\Sigma$ (a short account of this process is
given in section \ref{SectionVanishing}).

Now, at least two different research directions arise: to adapt such a theory to singular manifolds and submanifolds
\cite{LeSu1}, \cite{LeSu2}, or to try to develop new vanishing theorems.
This paper falls into the second group. As we said such vanishings theorems arise when we have the existence of partial
holomorphic connections; this is the case when we have a holomorphic foliation which leaves a submanifold $S$ invariant.
This gives rise to index theorems for $N_{\F|_S}$, the normal bundle of the foliation seen as a
quotient of the tangent bundle of the submanifold (Baum-Bott index), $N_S$, the normal bundle to the submanifold
(Camacho-Sad index) and $N_{\F}|_S$ the normal bundle of the foliation seen as a quotient of the ambient tangent bundle
restricted to $S$ (Kahnedani-Lehmann-Suwa or variation index \cite{KaSu},\cite{LeSu1}). 
The fundamental reference on all
these topics is \cite{Suwa}.

The same tecniques allow to prove other index theorems, even if the holomorphic foliation is
transverse to the submanifold, as the index theorem for the bundle $\Hom(\F,N_S)$, which gives rise
to the tangential index \cite{Honda}. 

In the last years, a new theory was developed also for endomorphisms of a complex manifold leaving a submanifold
pointwise invariant \cite{ABTHolMap} and the case of foliation transverse to a submanifold in the Camacho-Sad and
Baum-Bott case \cite{ABTHolMapFol},\cite{Cam},\cite{CaLe},\cite{CaMoSa}.
The key to the existence of partial holomorphic connections is the vanishing of the Atiyah class, a cohomological
obstruction to the splitting of a short exact sequence of sheaves of $\Ol_S$-modules \cite{Atiyah}.
In the paper \cite{ABTHolMapFol} the Atiyah sheaf for the normal bundle of a submanifold was described in a more
concrete way, giving new insights to the problem.
Further developments as \cite{ABTEmbeddings} showed the strong connection between the existence of partial holomorphic
 connections for $N_S$ and the ``regularity'' of the embedding of a subvariety.

In section \ref{SecAtiyah} of this paper we find a more concrete realization of the Atiyah sheaf for the normal bundle
of a foliation seen as a quotient of the ambient tangent bundle and study some sufficient conditions for the existence
of a
more general variation action.
First of all in section \ref{Sec:Frobenius} we define what a foliation of the $k$-th infinitesimal neighborhood of a
submanifold is and prove some Frobenius type theorems for such foliations, which give us the possibility of choosing
atlases with some particular
structure; in these special atlases, it is clear that the existence of a foliation of the first
infinitesimal neighborhood is the key to
the existence of partial holomorphic connections on the normal bundle of a foliation seen as a quotient of the ambient
tangent bundle.
Therefore, to generalize the variation index we have to find foliations of the first infinitesimal neighborhood: with
this aim we study the problem of how to ``project'' a transversal foliation to a tangential one,
using first order
splitting (section \ref{Sec2Split}) and how to extend a foliation of a submanifold $S$ to an infinitesimal
neighborhood (section \ref{SecExtension}).
Moreover, thanks to the new realization of the Atiyah sheaf, we develop in section \ref{SectionNonInvolutive} a result
about non-involutive subsheaves of $\T_S$ which extend to the first infinitesimal neighborhood. This gives us
informations about vanishing of the characteristic classes of the involutive closure of their restriction to $S$ (the
smallest involutive subsheaf of $\T_S$ containing it) and some more results regarding the extension problem.
Thanks to the machinery developed we can then prove some new index theorems, generalizing the Khanedani-Lehmann-Suwa
action and compute their indexes is some simple cases.

\noindent \textbf{Acknowledgements. }The article is part of the author's Ph. D. thesis work and he
would like to thank Professor Marco Abate, his advisor, 
for his thoughtful guide and many useful advices and hints, Professor Tatsuo Suwa for many precious conversations, his
patience and his wisdom, and Professor Filippo Bracci for an important suggestion.
The author would like to thank Prof. Kioji Saito and the Institute for the Physics and Mathematics of the Universe,
Kashiwa, Japan and the
International Centre for Theoretical Physics, Trieste, Italy for the warm hospitality and wonderful research enviroments
offered to him.
The author would like also to thank GNSAGA, for the help in funding his mission to Japan.

\section*{Notation and conventions}
In this paper we are going to use the Einstein summation convention. To ease the understanding of the computations the
indexes are going to have a fixed range.
In this paper, $M$ is a $n$-dimensional complex manifold, $S$ a complex subvariety of codimension $m$ and $\F$ a
dimension $l$ holomorphic foliation of either $M$ or $S$, with $l\leq n-m$. Then the indexes are going to have the
following range:
\begin{itemize}
\item $h,k$ will range in $1,\ldots, n$, these are the indexes relative to the coordinate system of $M$;
\item $p,q$ will range in $m+1,\ldots,n$, in an atlas adapted to $S$ (see definition \ref{Def:AdaptedS}); these are the
indexes relative to the coordinates along $S$;
\item $r,s$ will range in $1,\ldots,m$, in an atlas adapted to $S$; these are the indexes relative to the
coordinates normal to $S$;
\item $i,j$ will range in $m+1,\ldots,m+l$, in an atlas adapted to $\F$ (see definition \ref{Def:AdaptedFS}); these are
the indexes relative to the coordinates along $\F$;
\item $u,v$ will range in $1,\ldots,m,m+l+1,\ldots,n$, in an atlas adapted to $\F$; these are the the indexes relative
to the coordinates normal to $\F$.
\end{itemize}
In case we shall need more indexes of each type, we shall prime $'$ them or put a subscript, e.g. $r_1$.
We shall denote by $\Ol_M$ the structure sheaf of holomorphic functions on $M$, by $\mathcal{I}_S$ the ideal sheaf of a
subvariety $S$ and by $\mathcal{I}_S^k$ its $k$-th power as an ideal.
If $f$ is an element of $\Ol_M$ we will denote by $[f]_{k+1}$ its image in $\Ol_{S(k)}:=\Ol_M/\mathcal{I}_S^{k+1}$.
Moreover we denote by $\T_M$ and $\T_S$ the tangent sheaves to $M$ and $S$ respectively, where defined.
The following is a definition we will use through the whole paper.
\begin{Definition}\label{Def:AdaptedS}
Let $\mathcal{U}$ be an atlas for $M$. We say that $\mathcal{U}$ is \textbf{adapted to $S$} if on each coordinate
neighborhood
$(U_{\alpha},z^1_{\alpha},\ldots,z^n_{\alpha})$ such that $U\cap S$ is not empty, we have that $S\cap
U_{\alpha}=\{z^1_{\alpha}=\ldots=z^m_{\alpha}=0\}$, where $m$ is the codimension of $S$.
\end{Definition}

\section{Foliations of $k$-th infinitesimal neighborhoods}\label{Sec:Frobenius}
In this section we shall define and develop a theory for foliations of $k$-th infinitesimal neighborhoods. We are going
to use the notion of logarithmic vectors field, introduced in \cite{Saito}. The sheaf of these vector
fields can behave badly if the subvariety $S$ they leave invariant is non regular. In the rest of the section $S$ is
assumed to be a submanifold.

\begin{Definition}
The \textbf{$k$-th infinitesimal neighborhood} of a complex submanifold $S$ is the ringed space $(S,\Ol_{S(k)})$, where by $\Ol_{S(k)}$ we denote the quotient sheaf $\Ol_M/\mathcal{I}_S^{k+1}$.
\end{Definition}

\begin{Definition}
A section $v$ of $\T_{M}$ is called \textbf{logarithmic} if $v(\mathcal{\mathcal{I}}_S)\subseteq
\mathcal{\mathcal{I}}_S$.
The sheaf $\T_M(\log S):=\{v\in\T_M\mid v(\mathcal{\mathcal{I}}_S)\subseteq \mathcal{\mathcal{I}}_S\}$ is called the \textbf{sheaf of logarithmic sections} and is a subsheaf of $\T_M$.
The \textbf{tangent sheaf of the $k$-th infinitesimal neighborhood}, denoted by $\T_{S(k)}$, is the image of the sheaf
homomorphism $\T_M(\log S)\otimes_{\Ol_M} \Ol_{S(k)}\to \T_M\otimes_{\Ol_M} \Ol_{S(k)}$ and is a sheaf on $S$.
We will say that a section $v\in \T_{S(k)}$ is \textbf{tangential to the $k$-th infinitesimal neighborhood}.
\end{Definition}
\begin{Remark}\label{StruttLog}
If a point $x$ does not belong to $S$, the stalk $\T_M(\log S)_x$ coincides with $\T_{M,x}$.
Suppose we have an atlas adapted to $S$, if $x\in S$ the stalk $\T_M(\log S)_x$ is generated by
\[ z^r\frac{\partial}{\partial z^s},\frac{\partial}{\partial z^p}.\]
Then a section $v$ of $\T_{S(k)}$ is written locally as:
\[v=[a^r]_{k+1}\frac{\partial}{\partial z^r}+[a^p]_{k+1}\frac{\partial}{\partial z^p},\]
where the $a^r$ belong to $\mathcal{I}_S$.
\end{Remark}
\begin{Remark}
In the following, given a section $v$ of $\T_{S(k)}$ and an open set $U_{\alpha}$ of $M$ intersecting $S$, we shall
denote by $\tilde{v}_{\alpha}$ a local extension of $v$ to $U_{\alpha}$ as a section of $\T_M(U_{\alpha})$; given an
atlas adapted to $S$ it is possible to build such an extension on each coordinate chart. If the open set is clear from
the discussion we shall denote the extension simply by $\tilde{v}$; please note that such an extension is not only a
section of $\T_M(U_{\alpha})$ but also a section of $\T_M(\log S)(U_{\alpha})$. Taken an extension $\tilde{v}$, denoted
by $[1]_{k+1}$ the class of $1$ in $\Ol_{S(k)}(U_{\alpha})$ we shall denote its restriction to the $k$-th infinitesimal
neighborhood by:
\[\tilde{v}\otimes[1]_{k+1}.\] 
We will prove in Lemma \ref{Lemma:Logaritmici} that this notation is consistent with the fact that the sections of
$\T_{S(k)}$ act as derivation of $\Ol_{S(k)}$.
Moreover given two open sets $U_{\alpha}$ and $U_{\beta}$ such that $U_{\alpha}\cap U_{\beta}\cap S\neq\emptyset$ and taken two extension $\tilde{v}_{\alpha}$ and $\tilde{v}_{\beta}$ respectively it follows from the definition that on $U_{\alpha}\cap U_{\beta}$ we have the following equivalence:
\begin{equation}\label{equivideale}
v=\tilde{v}_{\alpha}\otimes[1]_{k+1}=\tilde{v}_{\beta}\otimes[1]_{k+1}.
\end{equation}
\end{Remark}
\begin{Lemma}\label{Lemma:Logaritmici}
The sections of $\T_{S(k)}$ act as derivations of $\Ol_{S(k)}$. Furthermore, given two sections $v,w$ of $\T_{S(k)}$, their bracket, defined on each coordinate patch $U_{\alpha}$ such that $U_{\alpha}\cap S\neq\emptyset$  as
\[[v,w]:=[\tilde{v}_{\alpha},\tilde{w}_{\alpha}]\otimes[1]_{k+1},\]
where the bracket on the right side is the usual bracket on $\T_M$, is a well defined section of $\T_{S(k)}$.
\end{Lemma}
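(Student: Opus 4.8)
The plan is to reduce both assertions to one structural fact: a logarithmic section $\tilde v$ of $\T_M(\log S)$ preserves not merely $\mathcal{I}_S$ but every power $\mathcal{I}_S^{k+1}$. To prove this I would use the Leibniz rule on a generator $f_1\cdots f_{k+1}$ of $\mathcal{I}_S^{k+1}$ (with each $f_i\in\mathcal{I}_S$):
\[\tilde v(f_1\cdots f_{k+1})=\sum_{j} f_1\cdots \tilde v(f_j)\cdots f_{k+1},\]
where each summand contains the factor $\tilde v(f_j)\in\mathcal{I}_S$ by logarithmicity together with $k$ further factors in $\mathcal{I}_S$, so lies in $\mathcal{I}_S^{k+1}$. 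Hence $\tilde v(\mathcal{I}_S^{k+1})\subseteq\mathcal{I}_S^{k+1}$, and $\tilde v$ descends to a derivation $\bar v$ of $\Ol_{S(k)}$ by $\bar v([f]_{k+1}):=[\tilde v(f)]_{k+1}$. For the first assertion it then remains to see that $\bar v$ depends only on the section $v=\tilde v\otimes[1]_{k+1}$ and not on the chosen extension: two extensions of the same section differ by an element $a$ of $\mathcal{I}_S^{k+1}\T_M$ (this is precisely the content of $\tilde v_\alpha\otimes[1]_{k+1}=\tilde v_\beta\otimes[1]_{k+1}$, cf. (\ref{equivideale})), and since the coefficients of $a$ lie in $\mathcal{I}_S^{k+1}$ one has $a(f)\in\mathcal{I}_S^{k+1}$ for every $f$, so $a$ induces the zero derivation. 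The same observation shows the locally defined derivations agree on overlaps and glue.

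For the bracket I would first record that $\T_M(\log S)$ is closed under the Lie bracket: if $\tilde v,\tilde w$ preserve $\mathcal{I}_S$ then $[\tilde v,\tilde w](f)=\tilde v(\tilde w(f))-\tilde w(\tilde v(f))\in\mathcal{I}_S$ for $f\in\mathcal{I}_S$, so $[\tilde v_\alpha,\tilde w_\alpha]$ is logarithmic and its class $[\tilde v_\alpha,\tilde w_\alpha]\otimes[1]_{k+1}$ is a bona fide section of $\T_{S(k)}$. Well-definedness then reduces, exactly as for the first part, to the single claim that if $a\in\mathcal{I}_S^{k+1}\T_M$ and $\tilde w\in\T_M(\log S)$ then $[a,\tilde w]\in\mathcal{I}_S^{k+1}\T_M$. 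Granting this, replacing $\tilde v_\alpha$ by $\tilde v_\alpha+a$ changes the bracket by $[a,\tilde w_\alpha]$, which vanishes after $\otimes[1]_{k+1}$; antisymmetry handles a change of the $\tilde w$-extension, and (\ref{equivideale}) upgrades this to compatibility on overlaps, giving a well-defined global section.

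The main obstacle is this last claim, and it is exactly where logarithmicity is indispensable. Writing $a=\sum_h g_h\,\partial/\partial z^h$ with $g_h\in\mathcal{I}_S^{k+1}$ and $\tilde w=\sum_h b^h\,\partial/\partial z^h$, the coefficient of $\partial/\partial z^j$ in $[a,\tilde w]$ is $\sum_h\bigl(g_h\,\partial_h b^j-b^h\,\partial_h g_j\bigr)$. The first sum lies in $\mathcal{I}_S^{k+1}$ because each $g_h$ does. The delicate term is $\sum_h b^h\,\partial_h g_j$: a single differentiation only guarantees $\partial_h g_j\in\mathcal{I}_S^{k}$, so naively this need not lie in $\mathcal{I}_S^{k+1}$. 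The resolution is that $\sum_h b^h\,\partial_h g_j$ is nothing but $\tilde w(g_j)$, and by the structural fact established above a logarithmic field sends $\mathcal{I}_S^{k+1}$ into itself, whence $\tilde w(g_j)\in\mathcal{I}_S^{k+1}$. Concretely, in coordinates adapted to $S$ the normal coefficients of $\tilde w$ lie in $\mathcal{I}_S$ while the tangential fields $\partial/\partial z^p$ preserve $\mathcal{I}_S^{k+1}$, which is the description in Remark \ref{StruttLog}. This closes the claim and with it the lemma.
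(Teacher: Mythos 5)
Your proof is correct and follows essentially the same route as the paper's: both arguments rest on the two facts that extensions of a given section of $\T_{S(k)}$ differ by an element of $\mathcal{I}_S^{k+1}\T_M$ and that logarithmic fields preserve $\mathcal{I}_S^{k+1}$ (via Leibniz), and both verify well-definedness of the bracket by expanding the difference of brackets and observing each term has coefficients in $\mathcal{I}_S^{k+1}$. Your only departure is organizational: you isolate the preservation of $\mathcal{I}_S^{k+1}$ and the closure of $\T_M(\log S)$ under brackets as explicit preliminary claims, which the paper uses implicitly inside its computation.
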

\begin{proof}
Let $v$ be a section of $\T_{S(k)}$ and $f$ a section of $\Ol_{S(k)}$. Let $U_{\alpha}$ and $U_{\beta}$ two coordinate patches of an atlas adapted to $S$ such that $U_{\alpha}\cap U_{\beta}\cap S\neq\emptyset$.
On $U_{\alpha}$ we take representatives $\tilde{f}_1$ and $\tilde{f}_2$ of $f$ and an extension $\tilde{v}_{\alpha}$ of $v$.
We define:
\[v(f):=\tilde{v}_{\alpha}(\tilde{f}_1)\otimes [1]_{k+1}=[\tilde{v}_{\alpha}(\tilde{f}_1)]_{k+1}.\]
We check now that this does not depend on the extension chosen for $f$:
\[\tilde{v}_{\alpha}(\tilde{f}_1)-\tilde{v}_{\alpha}(\tilde{f}_2)=\tilde{v}_{\alpha}(\tilde{f}_1-\tilde{f}_2)=\tilde{v}_{\alpha}(h_{r_1,\ldots,r_{k+1}}z^{r_1}\ldots z^{r_{k+1}}).\]
Since $\tilde{v}_{\alpha}$ is logarithmic, then 
\[\tilde{v}_{\alpha}(h_{r_1,\ldots,r_{k+1}}z^{r_1}\ldots z^{r_{k+1}})\in\mathcal{I}_S^{k+1}\]
and
\[(\tilde{v}_{\alpha}(\tilde{f}_1)-\tilde{v}_{\alpha}(\tilde{f}_2))\otimes[1]_{k+1}=[0]_{k+1}.\]
Now, let $\tilde{v}_{\alpha}$ and $\tilde{v}_{\alpha}'$ be two extensions of $v$. Suppose $w_{1,\alpha},\ldots,w_{n,\alpha}$ are generators for $\T_M(U_{\alpha})$.
By definition:
\[\tilde{v}_{\alpha}-\tilde{v}_{\alpha}'=g^h_{\alpha} w_{h,\alpha},\]
with $g^h_{\alpha}\in \mathcal{I}_S^{k+1}$ for each $h=1,\ldots,n$.
Then:
\[(\tilde{v}_{\alpha}-\tilde{v}_{\alpha}')(\tilde{f}_1)\otimes [1]_{k+1}=g^h_{\alpha}
w_{h,\alpha}(\tilde{f}_1)\otimes
[1]_{k+1}=w_{h,\alpha}(\tilde{f}_1)\otimes [g^h_{\alpha}]_{k+1}=[0]_{k+1}.\]
This implies also that if we take extensions $\tilde{v}_{\alpha}$ and $\tilde{v}_{\beta}$ and representatives $\tilde{f}_{\alpha}$ and $\tilde{f}_{\beta}$ for $f$ on $U_{\alpha}$ and $U_{\beta}$ respectively we have that on $U_{\alpha}\cap U_{\beta}$, the derivation is well defined.

We prove now the bracket is well defined; if $u$ and $v$ are sections of $\T_{S(k)}$ the bracket is:
\[ [u,v]=[\tilde{u},\tilde{v}]\otimes [1]_{k+1}.\]
If $\tilde{u}_1,\tilde{u}_2$ are two extensions of $u$ and $\tilde{v}_1,\tilde{v}_2$ are two extension of $w$
then
\begin{align*}
[\tilde{u}_1,\tilde{v}_1]-[\tilde{u}_2,\tilde{v}_2]&=[\tilde{u}_1,\tilde{v}_1]-[\tilde{u}_1,\tilde{v}_2]+[\tilde{u}_1,
\tilde{v}_2]-[\tilde{u}_2,\tilde{v}_2]\\
&=[\tilde{u}_1,\tilde{v}_1-\tilde{v}_2]+[\tilde{u}_1-\tilde{u}_2,\tilde{v}_2].
\end{align*}
As above, we have that
\[\tilde{u}_1-\tilde{u}_2=g^h_{\alpha}w_{h,\alpha},\quad \tilde{v}_1-\tilde{v}_2=t^h_{\alpha}w_{h,\alpha},\]
with $g^h_{\alpha},t^h_{\alpha}\in\mathcal{I}_S^{k+1}$ for every $h$.
Then:
\begin{align}
[\tilde{u}_1,\tilde{v}_1-\tilde{v}_2]+&[\tilde{u}_1-\tilde{u}_2,\tilde{v}_2]=[\tilde{u}_1,t^h_{\alpha}w_{h,\alpha}]+[
g^h_ { \alpha}w_{h,\alpha},\tilde{v}_2] \nonumber
\\&=\tilde{u}_1(t^h_{\alpha})w_{h,\alpha}+t^h_{\alpha}[\tilde{u}_1,w_{h,\alpha}]-\tilde{v}_2(g^h_{\alpha})w_{h,\alpha}
+g^h_{\alpha}[w_{h,\alpha},\tilde{v}_2].\label{bracbrac}
\end{align}
Since both $\tilde{v}_1$ and $\tilde{u}_2$ are logarithmic, the restriction to the $k$-th infinitesimal neighborhood of
\eqref{bracbrac} is $0$.
\end{proof}
Therefore, the following definition makes sense.
\begin{Definition}\label{Def:FolInf}
A \textbf{regular foliation} of $S(k)$  is a rank $l$ coherent subsheaf $\F$ of $\T_{S(k)}$, such
that:
\begin{itemize}
 \item for every $x\in S$ the stalk $\T_{S(k)}/\F_x$ is $\Ol_{S(k),x}$-free;
 \item for every $x\in S$ we have that $[\F_x,\F_x]\subseteq \F_x$ (where the bracket is the one defined in Lemma
\ref{Lemma:Logaritmici});
 \item the restriction of $\F$ to $S$, denoted by $\F|_S$, is a rank $l$ foliation of $S$.
\end{itemize}
\end{Definition}
\begin{Remark}
Please note that the third condition is a simplifying condition: in the paper \cite{Bracci} a lot of
work is devoted to clarify and explain the concept of extension of a foliation and our definition
is a particular case.
We want to avoid the following situation: let $U$ be an open neighborhood of the origin in
$\mathbb{C}^2$, with
coordinate system $(z^1,z^2)$ and let $S={z^1=0}$. 
We take a subbundle of $\T_{S(1)}$ generated by $[z^1]_2\partial/\partial z^1,\partial/\partial
z^2$. Clearly, it
is involutive with respect to the bracket defined above, but its restriction to $S$ gives rise to a
rank $1$ foliation.
\end{Remark}
The main tool of this section is the Holomorphic Frobenius Theorem, whose proof can be found e.g. in
\cite[pages~38--42]{Suwa}. Lemma \ref{Lemma:Commute} is a tool we use in proving the Frobenius Theorem for foliations of
the $k$-th infinitesimal neighborhood.
We give the proof after a definition.
\begin{Definition}\label{Def:AdaptedFS}
Let $S$ be a codimension $m$ complex submanifold of $M$, a complex $n$-dimensional manifold. Let $\F$ be a rank $l$ regular foliation of $S$. We say that an atlas $\{(U_{\alpha},z^1_{\alpha},\ldots,z^n_{\alpha})\}$ is \textbf{adapted to $S$ and $\F$} if 
\begin{itemize}
 \item $U_{\alpha}\cap S=\{z^1_{\alpha}=\ldots=z^m_{\alpha}=0\},$
 \item $\F|_{U_{\alpha}\cap S}$ is generated by $\partial/\partial z^{m+1}_{\alpha}|_S,\ldots,\partial/\partial z^{m+l}_{\alpha}|_S.$
\end{itemize} 
\end{Definition}
\begin{Remark}
The existence of such an atlas follows from the Holomorphic Frobenius theorem cited above.
\end{Remark}
\begin{Lemma}\label{Lemma:Commute}
Every regular foliation $\F$ of $S(k)$ admits a local frame which can be extended locally by commuting vector fields, i.e.,
for
every point $x\in S$ there exists a neighborhood $U_x$ of $x$ in $M$ and commuting sections 
$\tilde{w}_{m+1},\ldots,\tilde{w}_{m+l}$ of $\T_M$ on $U_x$ such that
$w_{i}:=\tilde{w}_{i}\otimes [1]_{k+1}$ are generators of $\F(U_x\cap S)$. 
\end{Lemma}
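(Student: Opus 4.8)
The plan is to reduce the statement to a purely local problem in an atlas adapted to $S$ and $\F$ (Definition \ref{Def:AdaptedFS}) and then to produce a triangular, exactly commuting extension of a normalized frame. First I would fix such a chart $(U_{\alpha},z^1,\ldots,z^n)$ around $x$, so that $S=\{z^1=\cdots=z^m=0\}$ and $\F|_S$ is generated by $\partial/\partial z^{m+1}|_S,\ldots,\partial/\partial z^{m+l}|_S$. Since $\T_{S(k)}/\F$ is $\Ol_{S(k)}$-free and the reduction of $\F$ modulo $\mathcal{I}_S$ is the standard frame $\{\partial/\partial z^i|_S\}$, a Nakayama-type argument lets me choose a frame $w_{m+1},\ldots,w_{m+l}$ of $\F$ whose matrix of components along $\partial/\partial z^{m+1},\ldots,\partial/\partial z^{m+l}$ is the identity. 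After this normalization each generator has the triangular form
\[ w_i=\frac{\partial}{\partial z^i}+b_i^u\frac{\partial}{\partial z^u}, \]
with $b_i^u\in\mathcal{I}_S/\mathcal{I}_S^{k+1}$: the logarithmic condition forces the normal components to lie in $\mathcal{I}_S$, and $w_i|_S=\partial/\partial z^i|_S$ forces every $b_i^u$ to vanish on $S$.

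The algebraic core of the argument is then to exploit this triangular shape together with involutivity. Using the bracket of Lemma \ref{Lemma:Logaritmici} and any local extensions $\tilde w_i=\partial/\partial z^i+\tilde b_i^u\,\partial/\partial z^u$, a direct computation gives
\[ [w_i,w_j]=\bigl(w_i(b_j^v)-w_j(b_i^v)\bigr)\frac{\partial}{\partial z^v}, \]
so the bracket has no component along the foliated directions $\partial/\partial z^{m+1},\ldots,\partial/\partial z^{m+l}$. On the other hand involutivity of $\F$ means $[w_i,w_j]=c_{ij}^h w_h$ for some $c_{ij}^h\in\Ol_{S(k)}$; comparing the (vanishing) foliated components and using that each $w_h$ has leading term $\partial/\partial z^h$ forces $c_{ij}^h=0$. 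Hence $[w_i,w_j]=0$ in $\T_{S(k)}$, equivalently $[\tilde w_i,\tilde w_j]$ lies in $\mathcal{I}_S^{k+1}\T_M$ for every triangular extension.

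It remains to upgrade this ``commuting modulo $\mathcal{I}_S^{k+1}$'' to genuinely commuting vector fields on a full neighborhood in $M$, and this is the step I expect to be the main obstacle: exact commutation is an infinite-order condition, so no single order-$k$ adjustment can enforce it. My plan is to realize $\F$ as the restriction of an honest foliation. I would construct a rank $l$ holomorphic involutive distribution $\mathcal{D}$ on a neighborhood $U_x$ of $x$ in $M$, again of triangular form $\tilde w_i=\partial/\partial z^i+\tilde b_i^u\,\partial/\partial z^u$ with $\tilde b_i^u\in\mathcal{I}_S$, whose coefficients extend the prescribed $b_i^u$ and which satisfies $[\tilde w_i,\tilde w_j]=0$ identically. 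Because the distribution is triangular, involutivity is equivalent to exact commutation, so this amounts to solving the Frobenius integrability system $w_i(b_j^v)=w_j(b_i^v)$ for holomorphic coefficients with prescribed normal $k$-jet. The technical heart is that this jet is admissible: one solves the system order by order in the powers of $\mathcal{I}_S$, the solvability at each step resting precisely on the vanishing bracket established above (together with the Jacobi identity), and convergence of the resulting series is obtained by a standard majorant argument.

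Finally I would invoke the Holomorphic Frobenius Theorem cited in the text to straighten $\mathcal{D}$: on a possibly smaller neighborhood there are coordinates $(t^1,\ldots,t^n)$ with $\mathcal{D}=\langle\partial/\partial t^1,\ldots,\partial/\partial t^l\rangle$. The coordinate fields $\tilde w_i:=\partial/\partial t^{\,i-m}$ commute identically, and since $\mathcal{D}$ was built so that $\mathcal{D}\otimes_{\Ol_M}\Ol_{S(k)}=\F$, their restrictions $\tilde w_i\otimes[1]_{k+1}$ generate $\F(U_x\cap S)$, which is exactly the desired conclusion.
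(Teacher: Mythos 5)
Your normalization and bracket computation are correct and coincide in substance with the paper's first half: the paper likewise reduces to a triangular frame (by completing the extended generators to a frame of $\T_M$ and inverting the coefficient matrix), and likewise concludes $[w_i,w_j]=0$ in $\T_{S(k)}$ (phrased there as injectivity on $\F$ of the projection $\Pi_{*,k}$ onto the foliated coordinates, which is the same as your comparison of foliated components). The genuine gap is in your final step, which is the actual content of the lemma: you reduce everything to the claim that the nonlinear Frobenius system $\tilde w_i(\tilde b_j^v)=\tilde w_j(\tilde b_i^v)$ admits a holomorphic solution with prescribed $k$-jet along $S$, and you justify this only by saying it can be solved ``order by order'' thanks to the Jacobi identity, with convergence by ``a standard majorant argument''. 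Neither assertion is carried out, and neither is routine: at each order $d$ the unknown homogeneous terms $\beta^u_{i,d}$ enter the degree-$d$ equations not only through $\partial\beta^v_{j,d}/\partial z^i$ but also through quadratic terms coupling them to the first-order coefficients (differentiation along directions normal to $S$ lowers the degree by one), so identifying the obstruction at each order and showing it vanishes requires real work, as does setting up majorants for the coupled system. As written, the hardest part of the lemma is asserted rather than proved.

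Moreover, your premise for taking this route --- that ``exact commutation is an infinite-order condition, so no single order-$k$ adjustment can enforce it'' --- is refuted by the paper's own argument, which is finite and linear. The paper inducts on the number of frame fields: assuming the first $l'-1$ fields already admit exactly commuting extensions, it straightens those extensions to coordinate fields $\partial/\partial z^{m+1},\ldots,\partial/\partial z^{m+l'-1}$ by the holomorphic Frobenius theorem; commutation of the remaining field with coordinate fields is then the \emph{linear} condition that its coefficients be independent of those variables, and since the order-$k$ vanishing of the bracket says the offending derivatives lie in $\mathcal{I}_S^{k+1}$, one kills them by adding to each coefficient a correction in $\mathcal{I}_S^{k+1}$ obtained as a primitive of a closed holomorphic $1$-form (holomorphic Poincar\'{e} lemma, closedness coming from equality of mixed partials). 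This single correction leaves the classes mod $\mathcal{I}_S^{k+1}$ unchanged and achieves exact commutation, with no formal series and no convergence argument. If you want to complete your proof, this straightening-and-integrating device is precisely the missing mechanism.
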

\begin{proof}
Let $x$ be a point of $S$, we take a coordinate patch $(U,\phi)$ centered in $x$, adapted to $S$ and $\F|_S$. 
Let $\{v_{i}\}$ be a system of generators of $\F$ in $U\cap S$ and
$\{\tilde{v}_{i}\}$ be vector fields extending them. Call $\mathcal{D}$ the distribution spanned by the
$\tilde{v}_{i}$'s. We complete the frame
$\{\tilde{v}_{i}\}$ to a frame $\{\tilde{v}_{k}\}$ of $\T_M$, taking as $\tilde{v}_t$ the coordinate fields
$\partial/\partial z^t$.
Now, we choose holomorphic functions $f^{k}_{i}$ such that:
\[\tilde{v}_{k}=f_{k}^h\frac{\partial}{\partial z^h}.\]
Please remark that the matrix $A:=(f^{h}_{k})$ is a matrix of holomorphic
functions acting on the right:
\[|\tilde{v}^1,\ldots,\tilde{v}^n|=\bigg|\frac{\partial}{\partial z^1},\ldots,\frac{\partial}{\partial z^n}\bigg|\cdot
A.\]
By hypothesis we know that $A$ is non singular in $x$, so there exists a neighborhood (still denoted by $U$) of $x$
such that this matrix
is invertible
with inverse a matrix of holomorphic functions.
Let $(g^{k}_{h})$ be its inverse matrix. We define $\tilde{w}_{i}=g^{j}_{i}
\tilde{v}_{j}$ and we denote by $w_{i}:=\tilde{w}_{i}\otimes [1]_{k+1}$.
Each one of the $\tilde{w}_{i}$'s belongs to the module generated by
$\tilde{v}_{m+1},\ldots,\tilde{v}_{m+l}$ therefore each $w_i$ belongs to $\T_{S(k)}$. This implies,
thanks to
Lemma \ref{Lemma:Logaritmici} that 
\[
[w_{i},w_{j}]=[\tilde{w}_{i},\tilde{w}_{j}]\otimes[1]_{k+1}=[g^{i'}_i\tilde{v}_{i'},g^{j'}_j\tilde{v}_{j'}]\otimes[1]_{
k+1}\in \F.\]
We claim now that the $\tilde{w}_{j}$ generate $\mathcal{D}$ and therefore, when restricted to $S(k)$ generate $\F$. Let
$\pi$ be
the
projection
$(z^1,\ldots,z^n)\mapsto(z^{m+1},\ldots,z^{m+l})$ and $\Pi=\pi\circ\phi.$ We have:
\[\Pi_*(\tilde{w}_{i})=\Pi_*(\tilde{w}_{i})+g^t_{i}\Pi_*\bigg(\frac{\partial}{
\partial z^t} \bigg)=\Pi_*(g^k_{i}\tilde{v}_{k})=\Pi_*\bigg(\frac{\partial}{\partial
z^{i}}\bigg)=\frac{\partial}{\partial
z^{i}}, \]
so the $\tilde{w}_{i}$ generate $\mathcal{D}$. Moreover, by naturality of Lie brackets, we have that 
\[\Pi_*([\tilde{w}_i,\tilde{w}_j])=[\Pi_*(\tilde{w}_i),\Pi_*(\tilde{w}_j)].\]
The mapping $\Pi_*$ induces a map $\Pi_{*,k}:\T_M\otimes \Ol_{S(k)}\to\Ol_{S(k)}^l,$ given by:
\[\Pi_{*,k}(v\otimes [1]_{k+1})=\Pi_*(\tilde{v})\otimes[1]_{k+1}.\]
This map is injective when restricted to $\F$; since $[w_i,w_j]\in\F$ and $\Pi_{*,k}([w_i,w_j])=0$ we have that $[w_i,w_j]=0$.
We want now to modify the $\tilde{w}_i$'s to obtain $l$ independent commuting sections of $\F$, without changing their
equivalence class. Therefore, we look for extensions of the $w_i$'s which satisfy the thesis of the theorem, proceeding
by induction on the number of sections. If $l'=1$, we can take any extension of $w_{m+1}$ (every vector field commutes
with itself).
Suppose now the claim is true for $l'-1$ sections. Then, by the Holomorphic Frobenius theorem there exists a coordinate
chart
adapted to $S$ in which
$\tilde{w}_{m+1}=\partial/\partial z^{m+1},\ldots,\tilde{w}_{m+l'-1}=\partial/\partial z^{m+l'-1}.$
Now, since the $w_{i}$ are commuting when restricted to $S(k),$ if 
\[w_{m+l'}=[g^v]_{k+1}\frac{\partial}{\partial z^v}+[f^i]_{k+1}\frac{\partial}{\partial z^i},\]
we have that:
\begin{align*}
[0]_{k+1}=\frac{\partial [g^v]_{k+1}}{\partial z^{i}}\frac{\partial}{\partial z^v}+\frac{\partial [f^j]_{k+1}}{\partial
z^{i}}\frac{\partial}{\partial z^j}=\bigg[\frac{\partial g^v}{\partial z^{i}}\bigg]_{k+1}\frac{\partial}{\partial
z^v}+\bigg[\frac{\partial f^j}{\partial z^{i}}\bigg]_{k+1}\frac{\partial}{\partial z^j},
\end{align*}
where $i$ ranges in $m+1,\ldots,m+l'-1$.
The last equality tells us that:
\[\frac{\partial g^v}{\partial z^{i}}=z^{r_1}\cdots z^{r_{k+1}} h^v_{r_1,\cdots,r_{k+1},i}\,,\quad\frac{\partial
f^j}{\partial
z^{i}}=z^{r_1}\cdots z^{r_{k+1}} h^j_{r_1,\cdots,r_{k+1},i}. \]
We have to find $\tilde{g^v}$, $\tilde{f}^j$ representatives for the classes $[g^v]_{k+1}$,$[f^j]_{k+1}$ such that 
\[0=\frac{\partial \tilde{g}^v}{\partial z^{i}}\frac{\partial}{\partial z^v}+\frac{\partial \tilde{f}^j}{\partial
z^{i}}\frac{\partial}{\partial z^j}.\]
We do that for one of the $g^v$'s, the method applies to all the other coefficients.
Now, $\tilde{g}^v=g^v+z^{r_1}\cdots z^{r_{k+1}} \tilde{h}_{r_1,\ldots,r_{k+1}}$, so
\begin{align*}
\frac{\partial \tilde{g}^v}{\partial z^{i}}&=\frac{\partial g^v}{\partial z^{i}}+z^{r_1}\cdots
z^{r_{k+1}}\frac{\partial \tilde{h}^v_{r_1,\ldots,r_{k+1}}}{\partial z^{i}}\\
&=z^{r_1}\cdots z^{r_{k+1}}
h^v_{r_1,\ldots,r_{k+1},i}+z^{r_1}\cdots z^{r_{k+1}}\frac{\partial \tilde{h}^v_{r_1,\ldots,r_{k+1}}}{\partial z^{i}}.
\end{align*}
Therefore, the problem reduces to find a primitive $\tilde{h}^v_{r_1,\ldots,r_{k+1}}$ for the $1$-form 
\[\omega:=-h^v_{r_1,\ldots,r_{k+1},i} dz^i,\]
where the other coordinates are considered as parameters. 
If we denote by $\partial$ the holomorphic differential and supposing, without loss of generality, that $U$ is simply connected and centered at $x\in S$ (i.e. $\phi(x)=0$)
we have, by the Holomorphic Poincar\'{e} Lemma, that this primitive exists if and only if $\omega$ is closed. Therefore
we need to check that the mixed partial derivatives coincide:
\[z^{r_1}\cdots z^{r_{k+1}}\frac{\partial h^v_{r_1,\ldots,r_{k+1},i}}{\partial z^j}=\frac{\partial^2 g^v}{\partial
z^j\partial z^i}=\frac{\partial^2
g^v}{\partial z^i\partial z^j}=z^{r_1}\cdots z^{r_{k+1}}\frac{\partial h^v_{r_1,\ldots,r_{k+1},j}}{\partial z^i}.\]
Then, the primitive exists and is defined in $U$ by:
\[\tilde{h}^v_{r_1,\ldots,r_{k+1}}(z^1,\ldots, z^n)=\int_{\gamma}-h^v_{r_1,\ldots,r_{k+1},i} dz^i,\]
where $\gamma$ is a curve such that $\gamma(1)=(z^1,\ldots,z^n)$ and $\gamma(0)=0$.
\end{proof}
As a simple consequence of the Lemma, we have the Frobenius Theorem for $k$-th infinitesimal neighborhoods.
\begin{Corollary}[Frobenius Theorem for $k$-th infinitesimal neighborhoods]\label{Frobenius}$\,$\\
Suppose $S$ is a non singular complex submanifold of codimension $m$ in a complex manifold $M$ of dimension $n$ and
suppose we have a foliation $\F$ of $S(k)$ of rank $l$. Then there exists an atlas $\{U_{\alpha},\phi_{\alpha}\} $adapted to $S$ such that if $U_{\alpha}\cap U_{\beta}\cap S\neq \emptyset$ then:
\begin{equation}\label{secondorder}
\bigg[\frac{\partial z_{\alpha}^{t}}{\partial z_{\beta}^{i}}\bigg]_{k+1}=0,
\end{equation}
for $t=1,\ldots,m,m+l+1,\ldots,n$ and $i=1,\ldots,l$ on $U_{\alpha}\cap U_{\beta}$.
\end{Corollary}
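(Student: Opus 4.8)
The plan is to build, around each point of $S$, a single chart that is simultaneously adapted to $S$ and straightens $\F$ to coordinate directions; once such charts are assembled into an atlas, the transition condition \eqref{secondorder} will drop out of a coefficient comparison inside the free module $\T_M\otimes_{\Ol_M}\Ol_{S(k)}$. Throughout I write the foliation coordinates as $z^{m+1},\dots,z^{m+l}$, in accordance with the index conventions.

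I would fix $x\in S$ and invoke Lemma \ref{Lemma:Commute}: on a neighborhood $U_x$ there are commuting sections $\tilde{w}_{m+1},\dots,\tilde{w}_{m+l}$ of $\T_M$ whose restrictions $w_{m+i}=\tilde{w}_{m+i}\otimes[1]_{k+1}$ generate $\F$ on $U_x\cap S$. Each $\tilde{w}_{m+i}$ is logarithmic, since it lies in the $\Ol_M$-module generated by logarithmic extensions of a frame of $\F$ and the commuting modifications performed in Lemma \ref{Lemma:Commute} alter it only by sections with coefficients in $\mathcal{I}_S^{k+1}\subseteq\mathcal{I}_S$. Hence the $\tilde{w}_{m+i}$ are tangent to $S$ and $\mathbb{C}$-linearly independent near $x$.

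The delicate step, which I expect to be the main obstacle, is to straighten these fields while keeping $S$ flat. Starting from a chart adapted to $S$ and to $\F|_S$ in the sense of Definition \ref{Def:AdaptedFS}, I may assume $\tilde{w}_{m+i}|_S=\partial/\partial z^{m+i}|_S$, so that the transversal $T=\{z^{m+1}=\dots=z^{m+l}=0\}$ meets the distribution $\D:=\langle\tilde{w}_{m+1},\dots,\tilde{w}_{m+l}\rangle$ transversally along $S$, with $T\cap S=\{z^1=\dots=z^{m+l}=0\}$. Composing the commuting holomorphic flows of the $\tilde{w}_{m+i}$ with a parametrization of $T$ yields, as in the Holomorphic Frobenius Theorem, a new chart in which $\tilde{w}_{m+i}=\partial/\partial z^{m+i}$ identically. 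Because the $\tilde{w}_{m+i}$ are tangent to $S$ their flows preserve $S$, so a point lies in $S$ exactly when its base point on $T$ does; this forces the new chart to remain adapted to $S$, i.e.\ $S=\{z^1=\dots=z^m=0\}$. On $U_x\cap S$ the foliation $\F$ is then generated by $\partial/\partial z^{m+1}\otimes[1]_{k+1},\dots,\partial/\partial z^{m+l}\otimes[1]_{k+1}$, and these form a free frame of $\F$ because along-$S$ coordinate fields carry free coefficients in $\T_{S(k)}$. Collecting one such chart around every point of $S$, together with arbitrary charts off $S$, produces the atlas. This whole paragraph hinges on the logarithmic (tangency) property that makes the flows respect $S$.

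Finally, to verify \eqref{secondorder}, I would take two charts $U_\alpha,U_\beta$ of this atlas with $U_\alpha\cap U_\beta\cap S\neq\emptyset$ and expand a foliation direction of the $\beta$-chart in the $\alpha$-frame:
\[\frac{\partial}{\partial z_\beta^{m+i}}=\frac{\partial z_\alpha^{h}}{\partial z_\beta^{m+i}}\frac{\partial}{\partial z_\alpha^{h}}.\]
Restricting to $S(k)$ gives a well-defined section of $\T_{S(k)}$ by Lemma \ref{Lemma:Logaritmici}, equal to $w_{m+i}^{\beta}=\partial/\partial z_\beta^{m+i}\otimes[1]_{k+1}\in\F$. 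By the construction above this section is an $\Ol_{S(k)}$-combination of the $\partial/\partial z_\alpha^{m+j}\otimes[1]_{k+1}$ alone, so after subtracting those terms the coefficients of the foliation-normal directions $\partial/\partial z_\alpha^{t}$, for $t\in\{1,\dots,m,m+l+1,\dots,n\}$, assemble into a vanishing element of $\T_{S(k)}$. Viewing $\T_{S(k)}$ as a submodule of the free $\Ol_{S(k)}$-module $\T_M\otimes_{\Ol_M}\Ol_{S(k)}$ on the basis $\{\partial/\partial z_\alpha^{h}\otimes[1]_{k+1}\}$ (Remark \ref{StruttLog}), a coefficient comparison forces each of these to vanish in $\Ol_{S(k)}$, that is
\[\Big[\frac{\partial z_\alpha^{t}}{\partial z_\beta^{m+i}}\Big]_{k+1}=0,\]
which is precisely \eqref{secondorder}.
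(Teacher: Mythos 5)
Your proposal is correct and follows essentially the same route as the paper's proof: invoke Lemma \ref{Lemma:Commute} to get commuting extensions, straighten them to coordinate fields via the usual Holomorphic Frobenius Theorem, and then deduce \eqref{secondorder} from the fact that the $\beta$-generators of $\F$ are $\Ol_{S(k)}$-combinations of the $\alpha$-generators (your coefficient comparison in the free module $\T_M\otimes_{\Ol_M}\Ol_{S(k)}$ is the same computation the paper performs by applying $w_{i,\beta}$ to the coordinate functions $z^t_{\alpha}$). Your only real addition is making explicit, via logarithmicity and flow-invariance of $S$, why the straightening chart can be kept adapted to $S$ --- a point the paper's proof leaves implicit.
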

\begin{proof}
We take an atlas adapted to $S$ and extensions $\tilde{w}_{i,\alpha}$ as given by Lemma \ref{Lemma:Commute}. 
By the usual Holomorphic Frobenius theorem, there exist a coordinate system (modulo shrinking) on $U_{\alpha}$ such
that 
\[\tilde{w}_{m+1,\alpha}=\frac{\partial}{\partial z^{m+1}_{\alpha}},\ldots,\tilde{w}_{m+l,\alpha}=\frac{\partial}{\partial
z^{m+l}_{\alpha}}.\] 
We take such coordinate systems. Since we are dealing with a foliation of $S(k)$, we know that if
$U_{\alpha}\cap U_{\beta}\cap S\neq \emptyset$ and $\F$ is generated on each $U_{\alpha}\cap S$ by
$w_{1,\alpha},\ldots,w_{l,\alpha}$ we have that
$w_{i,\alpha}=[(c_{\alpha\beta})_{i}^j]_{k+1} w_{j,\beta}$. Hence:
\begin{align*}
\bigg[\frac{\partial z^{t}_{\alpha}}{\partial z^{i}_{\beta}}\bigg]_{k+1}&=\tilde{w}_{i,\beta}\otimes
[1]_{k+1}(z^{t}_{\alpha})=w_{i,\beta}(z^{t}_{\alpha})=[c_{i}^j]_{k+1} w_{j,\alpha}(z^{t}_{\alpha})\\
=& [c_{i}^j]_{k+1}\tilde{w}_{j,\alpha}\otimes [1]_{k+1}(z^{t}_{\alpha})=[c_{i}^j]_{k+1}\bigg[\frac{\partial
z^{t}_{\alpha}}{\partial z^{j}_{\alpha}}\bigg]_{k+1}=[c_{i}^j\delta_j^t]_{k+1}=[0]_{k+1}.
\end{align*}
\end{proof}
\begin{Remark}
It is easily seen that the existence of an atlas satisfying \eqref{secondorder} implies the existence of a foliation of $\T_{S(k)}$, generated on each chart $U_{\alpha}$ intersecting $S$ by $\{\partial/\partial z^{m+1}_{\alpha},\ldots,\partial/\partial z^{m+l}_{\alpha}\}$.
\end{Remark}
\begin{Definition}\label{CondStar}
We say that a foliation $\F$ of $S$ \textbf{extends to the $k$-th infinitesimal neighborhood}, if there exists an atlas adapted to $S$ and $\F$ such that:
\[\bigg[\frac{\partial z_{\beta}^{t}}{\partial z_{\alpha}^{i}}\bigg]_{k+1}=0,\]
for $t=1,\ldots,m,m+l+1,\ldots,n$ and $i=1,\ldots,l$ on $U_{\alpha}\cap U_{\beta}$.\\
In the special case $\F=\T_S$ we say that $S$ has \textbf{$k$-th order extendable tangent bundle}.
\end{Definition}
\begin{Remark}
Let $M$ be a complex manifold, $\F$ a regular foliation of $M$. Every leaf of $\F$ has $k$-th order extendable tangent
bundle for every $k$.
\end{Remark}
\begin{Remark}
For a submanifold $S$ having first order extendable tangent bundle is a strong topological condition. As a matter of
fact, as we will see in section \ref{IndFolia} of this paper, this implies the vanishing of many of the characteristic
classes of the normal bundle of $S$.
\end{Remark}
\begin{Remark}
If a submanifold $S$ has first order extendable tangent bundle, it is likely that
every
foliation on $S$ extends to a foliation of the first infinitesimal neighborhood. A result in this direction can be found
in Corollary \ref{FirstOrder}.
\end{Remark}

\section{The Atiyah sheaf for the Variation Action}\label{SecAtiyah}
The Atiyah sheaf is an important geometrical object defined in \cite{Atiyah}. In that paper, it is proved that the
existence of a holomorphic connection for a vector bundle $E$ is equivalent to the splitting of the following
sequence:
\begin{equation}\label{seqAti}
0\rightarrow \Hom(E,E)\rightarrow \mathcal{A}_{E}\rightarrow TM\rightarrow 0,
\end{equation}
where $\mathcal{A}_{E}$ is the Atiyah sheaf of $E$.
In \cite{ABTHolMap} it was proved that the obstruction to the existence of partial holomorphic connections for a vector bundle $E$ along a subbundle $\F$ is equivalent to the splitting of the following sequence:
\begin{equation}\label{seqAtipart}
\xymatrix{0\ar[r]&\Hom(E,E)\ar[r]&\mathcal{A}_{E,\F}\ar[r]&\F\ar[r]&0 }. 
\end{equation}
\begin{Definition}
Let $S$ be a not necessarily closed complex submanifold of $M$, $\F$ a foliation of $S$. Let
$\T_{M,S(1)}:=\T_M\otimes_{\Ol_M}\Ol_{S(1)}$ and $\T_{M,S}:=\T_M\otimes_{\Ol_M}\Ol_{S}$; if
$\theta_1:\Ol_{S(1)}\to\Ol_S$ is the canonical projection, we denote by
$\Theta_1$ the map $\id\otimes \theta_1:\T_{M,S(1)}\to\T_{M,S}.$
We see $\F$ as a subsheaf of $\T_{M,S}$, and we define the \textbf{normal bundle to the foliation in the ambient tangent bundle} as the quotient of $\T_{M,S}$ by $\F$ and we will denote it by $\N_{\F,M}$. 
Let $\T_{M,S(1)}^{\F}:=\ker(\pr\circ\Theta_1)$, where $\pr$ is the quotient map in the short
exact sequence:
\begin{equation}\label{SesImp}
\xymatrix{
& & \T_S \ar[d]& &\\
0\ar[r]&\F \ar[ur] \ar[r]^i&\T_{M,S}\ar[r]^{\pr}&\N_{\F,M}\ar[r]&0.
} 
\end{equation}
\end{Definition}
\begin{Remark}
In our case, we have to replace $E$ in \eqref{seqAti} with $\N_{\F,M}$; the computation of the
obstruction to the splitting of this
sequence is a straightforward application of the procedure in \cite{Atiyah} and therefore we omit
it.
In an atlas adapted to $S$ and $\F$, in \u{C}ech-deRham cohomology the class is represented by the
cocycle
\[\bigg\{ U_{\alpha\beta},-\frac{\partial z_{\alpha}^{t'}}{\partial z^t_{\beta}}\frac{\partial^2
z^t_{\beta}}{\partial
z^i_{\alpha}\partial z^w_{\alpha}}\bigg|_S dz^i_{\alpha}\otimes\omega^w_{\beta}\otimes
\partial_{t',\beta}\bigg\},\]
where $\{\partial_{t,\alpha}\}$ is the quotient frame for $N_{F,M}$ in $U_{\alpha}$ and
$\omega^t_{\alpha}$ is the dual
frame for $\N_{\F,M}$ on $U_{\alpha}$.
\end{Remark}
As in \cite{ABTHolMapFol}, we will define a more concrete realization of the Atiyah sheaf for the
sheaf
$\mathcal{N}_{\F,M}$. We shall prove that the splitting of the Atiyah sequence for $\N_{\F,M}$ is equivalent to
the fact that the foliation $\F$ extends to the first infinitesimal neighborhood.
\begin{Remark} By definition $\Theta_1(\T_{M,S(1)}^{\F})$ is contained in the kernel of $\pr$, so, by exactness
of sequence \eqref{SesImp}, it is contained in the image of $\F$. Moreover, for each $v\in\F$, at least locally, the element $\tilde{v}\otimes [1]_2$
belongs to $\T_{M,S(1)}^{\F}$ and is projected by $\Theta_1$ to $i(v)$.
So, $\Theta_1(\T_{M,S(1)}^{\F})=i(\F)$.
\end{Remark}
\begin{Remark}\label{incoord}
Suppose we have a coordinate system adapted to $S$ and $\F$ (Definition \ref{Def:AdaptedFS}). Then $v$ belongs to
$\T_{M,S(1)}^{\F}$ if and only if 
$v=[a^k]_2\partial/\partial z^k,$ with $[a^t]_1=0$, where $t=1,\ldots,m,m+l+1,\ldots,n$.
Analogously $v$ belongs to $\mathcal{I}_S\T_{M,S(1)}^{\F}$ if and only if $v=[a^i]_2\partial/\partial z^i$, where
$a^i\in\mathcal{I}_S$ for $i=m+1,\ldots,m+l$. 
\end{Remark}
\begin{Lemma}\label{Algebroide}
Let $S$ be a complex submanifold of a complex manifold $M$ and $\F$ a foliation of $S$. Then
\begin{enumerate}
\item every $v$ in $\T_{M,S(1)}^{\F}$ induces a derivation $g\mapsto v(g)$ of $\Ol_{S(1)}$;
\item there exist a natural $\mathbb{C}$-linear map \label{commutatore}
$\{\ \cdot\ ,\ \cdot\ \}:\T_{M,S(1)}^{\F}\otimes\T_{M,S(1)}^{\F}\to\T_{M,S(1)}^{\F}$ such that
\begin{enumerate}
\item $\{u,v\}=-\{v,u\},$
\item $\{u,\{v,w\}\}+\{v,\{w,u\}\}+\{w,\{u,v\}\}=0,$
\item $\{gu,v\}=g\{u,v\}- v(g) u,$, for all $g\in\Ol_{S(1)}$
\item $\Theta_1(\{u,v\})=[\Theta_1(u),\Theta_1(v)].$
\end{enumerate}  
\end{enumerate}
\end{Lemma}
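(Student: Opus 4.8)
The plan is to exhibit $\T_{M,S(1)}^{\F}$ as a Lie algebroid over $\Ol_{S(1)}$ whose anchor is $\Theta_1$ (with image $i(\F)$), defining both the action on functions and the bracket by lifting to genuine vector fields on $M$ and reducing modulo $\mathcal{I}_S^2$, exactly in the spirit of Lemma \ref{Lemma:Logaritmici}.

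For (1) I would first note, by comparing the local descriptions in Remark \ref{incoord} and Remark \ref{StruttLog}, that $\T_{M,S(1)}^{\F}$ is a subsheaf of $\T_{S(1)}$: every $v=[a^k]_2\,\partial/\partial z^k$ in $\T_{M,S(1)}^{\F}$ has its normal components $a^r$ ($r=1,\dots,m$) in $\mathcal{I}_S$ because these are among the transverse indices $t$, so any representative $\tilde v$ is a logarithmic field. The formula $v(g):=[\tilde v(\tilde g)]_2$ is then the derivation already provided by Lemma \ref{Lemma:Logaritmici}, and in particular its independence of the lifts of $v$ and $g$ is inherited from there.

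For (2) I would set $\{u,v\}:=[\tilde u,\tilde v]\otimes[1]_2$, choosing extensions $\tilde u,\tilde v\in\T_M$ whose transverse components $\tilde u(z^t),\tilde v(z^t)$ lie in $\mathcal{I}_S$ (these exist by Remark \ref{incoord} and are automatically logarithmic), and then check three things in order. First---the one genuinely new point---that $\{u,v\}$ lands in $\T_{M,S(1)}^{\F}$ rather than merely in $\T_{S(1)}$: computing $[\tilde u,\tilde v](z^t)=\tilde u(\tilde v(z^t))-\tilde v(\tilde u(z^t))$ and using that each $\tilde v(z^t)\in\mathcal{I}_S$ together with the logarithmic property of $\tilde u$ (and symmetrically) shows the transverse components of $[\tilde u,\tilde v]$ remain in $\mathcal{I}_S$. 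Second, independence of the extensions, which is the main computational obstacle but is a verbatim repetition of the bracket argument of Lemma \ref{Lemma:Logaritmici}: write $\tilde u_1-\tilde u_2=g^h w_h$ and $\tilde v_1-\tilde v_2=t^h w_h$ with $g^h,t^h\in\mathcal{I}_S^2$, expand the difference of brackets bilinearly, and observe every term reduces to zero in $\Ol_{S(1)}$.

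The four properties then follow from the corresponding identities for the Lie bracket of vector fields, since reduction modulo $\mathcal{I}_S^2$ preserves them. Antisymmetry (a) is immediate, and the Jacobi identity (b) holds because the cyclic sum $[\tilde u,[\tilde v,\tilde w]]+[\tilde v,[\tilde w,\tilde u]]+[\tilde w,[\tilde u,\tilde v]]$ already vanishes on $\T_M$ (here $[\tilde v,\tilde w]$ is a legitimate extension of $\{v,w\}$ by the first check). For the Leibniz rule (c) I would extend $gu$ by $\tilde g\tilde u$ and apply $[\tilde g\tilde u,\tilde v]=\tilde g[\tilde u,\tilde v]-\tilde v(\tilde g)\tilde u$, reducing to $g\{u,v\}-v(g)u$ once one notes that $\T_{M,S(1)}^{\F}$ is an $\Ol_{S(1)}$-module. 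Finally, for (d), exactness of \eqref{SesImp} forces $\Theta_1(u),\Theta_1(v)\in i(\F)$, so both sides lie in the involutive sheaf $\F$; reducing $[\tilde u,\tilde v]$ modulo $\mathcal{I}_S$ and dropping the transverse components (which lie in $\mathcal{I}_S$) returns exactly $[\Theta_1(u),\Theta_1(v)]$ computed in $\F$.
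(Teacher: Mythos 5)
Your proposal is correct and takes essentially the same route as the paper's own proof: both reduce part (1) to Lemma \ref{Lemma:Logaritmici} by observing that $\T_{M,S(1)}^{\F}$ sits inside $\T_{S(1)}$, both define the bracket by lifting to logarithmic representatives and restricting modulo $\mathcal{I}_S^2$, and both obtain properties (a)--(c) from the usual vector-field identities and (d) by noting that the transverse components of the bracket lie in $\mathcal{I}_S$. The only cosmetic difference is that you establish closure of the bracket in $\T_{M,S(1)}^{\F}$ via the logarithmic property applied to the coordinates $z^t$, whereas the paper gets the same fact from an explicit coordinate expansion of $\{u,v\}$.
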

\begin{proof}
\begin{enumerate}
\item Let $(U;z^1,\ldots,z^n)$ be a coordinate chart adapted to $S$ and $\F$. An element
$v=[a^k]_2\frac{\partial}{\partial z^k}\in \T_{M,S(1)}$ belongs to $\T_{M,S(1)}^{\F}$ if and only if $[a^{t}]_1=0.$
Remembering Remark \ref{StruttLog} we see that $v$ belongs to $\T_{S(1)}$ and Lemma \ref{Lemma:Logaritmici} gives the
assertion. 
\item We define $\{\ \cdot\ ,\ \cdot\ \}$ by setting 
\[\{u,v\}(f)=u(v(f))-v(u(f)),\]
for every $f\in \Ol_{S(1)}$. Please note that, since $u$ and $v$ belong to $\T_{S(1)}$ this bracket coincides with the
bracket defined on $\T_{S(1)}$; the first three properties are proved exactly as for the usual bracket of vector
fields, while the fourth follows from a simple computation in coordinates. Suppose $(U;z^1,\ldots,z^n)$ is a
coordinate chart adapted to $S$ and $\F$, $u=[a^k]_2\frac{\partial}{\partial z^k},\,v=[b^k]_2\frac{\partial}{\partial
z^k}$ with $[a^t]_1=0$ and $[b^t]_1=0$. First of all we compute the Lie brackets on $\T_{M,S(1)}^{\F}$ in coordinates: 
\begin{align*}
\{u,v\}&=\bigg[a^h\frac{\partial b^k}{\partial z^h}-b^h\frac{\partial a^k}{\partial z^h}\bigg]_2\frac{\partial}{\partial
z^k}\\
&=\bigg[a^t\frac{\partial b^u}{\partial z^t}+a^i\frac{\partial b^u}{\partial
z^i}-b^t\frac{\partial a^u}{\partial z^t}-b^i\frac{\partial a^u}{\partial
z^i}\bigg]_2\frac{\partial}{\partial z^u}\\
&+\bigg[a^t\frac{\partial b^j}{\partial z^t}-b^t\frac{\partial a^j}{\partial z^t}\bigg]_2\frac{\partial}{\partial z^j}
+\bigg[a^i\frac{\partial b^j}{\partial z^i}-b^i\frac{\partial a^j}{\partial z^i}\bigg]_2\frac{\partial}{\partial z^j}.
\end{align*}
Please note that the coefficients in the first two summands of the last expression all belong to $\mathcal{I}_S/\mathcal{I}_S^2$. Therefore:
\begin{align*}
\Theta_1(\{u,v\})&=\bigg[a^i\frac{\partial b^j}{\partial z^i}-b^i\frac{\partial a^j}{\partial
z^i}\bigg]_1\frac{\partial}{\partial
z^j}=[\Theta_1(u),\Theta_1(v)].
\end{align*}
\end{enumerate}
\end{proof}
\begin{Remark}
In general, given two vector fields $u,v$ in $\T_{M,S(1)}$, we can define a bracket as:
$[u,v](f)=u(v(f))-v(u(f)),$
for $f\in \Ol_{S(1)}$.
Please note that this bracket is not a well defined section of $\T_{M,S(1)}$ but only of $\T_{M,S}$. In other words
$[u(v(f))-v(u(f))]_2$ is not well defined, while $[u(v(f))-v(u(f))]_1$ is.
\end{Remark}
\begin{Lemma}\label{BenDef}
Let $S$ be an $m$-codimensional complex submanifold of a complex manifold $M$ of complex dimension $n$ and $\F$ a
foliation
of $S$. Then:
\begin{enumerate}
 \item $u\in \T_{M,S(1)}^{\F}$ is such that $\pr([u,s])=0$ for all $s\in\T_{M,S(1)}$ if and only if $u\in
\mathcal{\mathcal{I}}_S\T_{M,S(1)}^{\F};$\label{benDefConn}
\item if $u\in \mathcal{\mathcal{I}}_S\T_{M,S(1)}^{\F}$ and $v\in \T_{M,S(1)}^{\F}$ then $\{u,v\}\in
\mathcal{\mathcal{I}}_S\T_{M,S(1)}^{\F};$\label{benDef}
\item the quotient sheaf 
\[\mathcal{A}=\T_{M,S(1)}^{\F}/\mathcal{\mathcal{I}}_S\T_{M,S(1)}^{\F}\]
admits a natural structure of $\Ol_S$-locally free sheaf such that the map induced by $\Theta_1$, whose image lies in
$\F$, is an $\Ol_S$-morphism.
\end{enumerate}
\end{Lemma}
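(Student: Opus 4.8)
The plan is to prove all three parts in a chart adapted to $S$ and $\F$ (Definition \ref{Def:AdaptedFS}), using the explicit descriptions from Remark \ref{incoord}: a section of $\T_{M,S(1)}^{\F}$ is $u=[a^k]_2\partial/\partial z^k$ with $[a^t]_1=0$, and it lies in $\mathcal{I}_S\T_{M,S(1)}^{\F}$ exactly when $[a^t]_2=0$ and $a^i\in\mathcal{I}_S$. Throughout I exploit one arithmetic fact about the filtration $\mathcal{I}_S=(z^1,\dots,z^m)$: differentiating by a coordinate \emph{along} $S$ (an index $i$ or $v$) preserves the order, $\partial/\partial z^i(\mathcal{I}_S^p)\subseteq\mathcal{I}_S^p$, whereas a \emph{normal} derivative $\partial/\partial z^r$ lowers it, $\partial/\partial z^r(\mathcal{I}_S^p)\subseteq\mathcal{I}_S^{p-1}$; I also use the elementary consequence that $f\in\mathcal{I}_S$ with all first derivatives in $\mathcal{I}_S$ forces $f\in\mathcal{I}_S^2$.

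For part (\ref{benDefConn}) I write $s=[b^k]_2\partial/\partial z^k$ arbitrary and note that, since $\pr$ annihilates the frame $\partial/\partial z^i$ of $\F$, the class $\pr([u,s])$ is read off the normal-to-$\F$ coefficients $[a^h\partial b^t/\partial z^h-b^h\partial a^t/\partial z^h]_1$ of $[u,s]$. If $u\in\mathcal{I}_S\T_{M,S(1)}^{\F}$, substituting $a^t\in\mathcal{I}_S^2$ and $a^i\in\mathcal{I}_S$ makes every summand lie in $\mathcal{I}_S$, so $\pr([u,s])=0$. Conversely, testing against $s=\partial/\partial z^h$ gives $[\partial a^t/\partial z^h]_1=0$ for all $h$, whence $a^t\in\mathcal{I}_S^2$ by the elementary fact; then testing against $s=z^{i_0}\partial/\partial z^{t_0}$ collapses the coefficient to $[a^{i_0}]_1$, so its vanishing gives $a^{i_0}\in\mathcal{I}_S$ for each $i_0$. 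By Remark \ref{incoord} this is precisely $u\in\mathcal{I}_S\T_{M,S(1)}^{\F}$.

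Part (\ref{benDef}) is the technical heart. Writing $u=[a^k]_2\partial/\partial z^k$ and $v=[b^k]_2\partial/\partial z^k$, I substitute $a^t\in\mathcal{I}_S^2$, $a^i\in\mathcal{I}_S$ and $b^t\in\mathcal{I}_S$ into the coordinate expression for $\{u,v\}$ computed in Lemma \ref{Algebroide} and check that its normal-to-$\F$ coefficients lie in $\mathcal{I}_S^2$ and its along-$\F$ coefficients in $\mathcal{I}_S$; by Remark \ref{incoord} this is exactly $\{u,v\}\in\mathcal{I}_S\T_{M,S(1)}^{\F}$. The along-$\F$ coefficients are immediate, each summand carrying a factor in $\mathcal{I}_S$ (for $b^i\partial a^j/\partial z^i$ one uses $a^j\in\mathcal{I}_S$ and order preservation under $\partial/\partial z^i$). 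The normal-to-$\F$ coefficients are where the order bookkeeping is essential, and I expect this to be the main obstacle: the one summand not manifestly of second order is $b^i\partial a^t/\partial z^i$, which nonetheless lies in $\mathcal{I}_S^2$ because $a^t\in\mathcal{I}_S^2$ and $\partial/\partial z^i$ differentiates along $S$, hence does not lower the order.

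For part (3), since $\mathcal{I}_S\T_{M,S(1)}^{\F}$ is the submodule $\mathcal{I}_S\cdot\T_{M,S(1)}^{\F}$, the quotient $\mathcal{A}$ is killed by $\mathcal{I}_S$ and so carries a natural $\Ol_{S(1)}/\mathcal{I}_S\Ol_{S(1)}=\Ol_S$-module structure. Local freeness follows from Remark \ref{incoord}: in an adapted chart $\T_{M,S(1)}^{\F}\cong(\mathcal{I}_S/\mathcal{I}_S^2)^{\oplus(n-l)}\oplus\Ol_{S(1)}^{\oplus l}$ and $\mathcal{I}_S\T_{M,S(1)}^{\F}\cong(\mathcal{I}_S/\mathcal{I}_S^2)^{\oplus l}$, so $\mathcal{A}\cong(\mathcal{I}_S/\mathcal{I}_S^2)^{\oplus(n-l)}\oplus\Ol_S^{\oplus l}$, which is $\Ol_S$-locally free because the conormal sheaf $\mathcal{I}_S/\mathcal{I}_S^2$ of the submanifold $S$ is locally free of rank $m$. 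Finally $\Theta_1$ maps $\mathcal{I}_S\T_{M,S(1)}^{\F}$ to $0$, since its image consists of the fields $[a^i]_1\partial/\partial z^i|_S$ with $a^i\in\mathcal{I}_S$; it therefore descends to a map $\mathcal{A}\to\F$ whose image is $i(\F)\cong\F$, and being the factorization of an $\Ol_{S(1)}$-linear map that vanishes on the $\mathcal{I}_S$-multiples, this induced map is automatically $\Ol_S$-linear.
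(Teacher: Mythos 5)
Your proof is correct and follows essentially the same route as the paper's: coordinate computations in an adapted chart with test sections for part (1), order bookkeeping in the filtration $\mathcal{I}_S^p$ for part (2) (which the paper leaves as ``a direct computation in coordinates''), and the same kill-the-$\mathcal{I}_S$-multiples quotient argument for part (3). The only differences are cosmetic: slightly different test sections ($z^{i_0}\,\partial/\partial z^{t_0}$ versus the paper's $[z^j]_2\,\partial/\partial z^1$, and all coordinate fields $\partial/\partial z^h$ versus only the normal ones $\partial/\partial z^r$), plus an explicit direct-sum description of $\mathcal{A}$ in place of the paper's list of free generators.
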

\begin{proof}
\begin{enumerate}
 \item Writing $u=[a^k]_2\frac{\partial}{\partial z^k},$ with $[a^t]_1=0$, and $s=[b^h]_2\frac{\partial}{\partial
z^h}\in \T_{M,S(1)}$, we have:
\[\pr([u,s])=\bigg[a^k\frac{\partial b^t}{\partial z^k}-b^k\frac{\partial a^t}{\partial
z^k}\bigg]_1\frac{\partial}{\partial z^t}.\]
If $u$ belongs to $\mathcal{\mathcal{I}}_S \T_{M,S(1)}^{\F}$ clearly $\pr([u,s])=0.$

Conversely, let $u$ be such that $\pr([u,s])=0$ for each $s\in \T_{M,S(1)}.$ We claim it belongs to
$\mathcal{I}_S\T_{M,S(1)}^{\F}$.
We know that $u$ belongs to $\T_{M,S(1)}^{\F}$, so that $[a^t]_1=0$. Let $s=\partial/\partial z^r$,
with $r=1,\ldots,m$. Then $[\partial
a^t/\partial z^r]_1=0$. Now, we take a representative $h_s z^s$, with $s=1,\ldots,m$, for the class
$[a^t]_1$. Computing:
\[0=\bigg[\frac{\partial a^t}{\partial z^r}\bigg]_1=\bigg[\frac{\partial h_s}{\partial z^s}z^s+h_s\delta^s_r\bigg]_1=[h_s]_1.\]
So, for each $s$, we have that $h_s$ belongs to $\mathcal{I}_S$, implying that $[a^t]_2=0$.
Fix now a $j$ in $m+1,\ldots,n$ and let $s=[z^{j}]_2\frac{\partial}{\partial z^1}$.
Then
\[0=-\bigg[z^{j}\frac{\partial a^t}{\partial z^1}\bigg]_1\frac{\partial}{\partial
z^t}+\bigg[a^k\delta_k^{j}\bigg]_1\frac{\partial}{\partial z^1}=[a^{j}]_1\frac{\partial}{\partial z^1},\]
where the last equality follows from the preceeding step, where we proved that $[a^{t}]_2=0$ and thus that
$\bigg[\frac{\partial a^{t}}{\partial z^1}\bigg]_1=0.$
So, $[a^j]_1=0$ and $u$ belongs to $\mathcal{I}_S\T_{M,S(1)}^{\F}$.
\item This follows by a direct computation in coordinates.
\item The sheaf $\T_{M,S(1)}^{\F}$ is an $\Ol_{S(1)}$-submodule of $\T_{M,S(1)}$ such that $g\cdot v$ belongs to
$\mathcal{I}_S\T_{M,S(1)}^{\F}$ for every $g\in \mathcal{I}_S/\mathcal{I}_S^2$ and $v\in \T_{M,S(1)}^{\F}$. Therefore
the $\Ol_{S(1)}$ structure induces a natural $\Ol_S$-module structure on $\mathcal{A}$.\\
Remember $\T_{M,S(1)}^{\F}$ is generated locally, in an atlas adapted to $S$ by $\partial/\partial z^j$, with
$j=m+1,\ldots,m+l$ and by $[z^r]_2\partial/\partial z^s$, with $r$ and $s$ varying in $1,\ldots,m$. 
Then, the sheaf $\mathcal{A}$ is a locally free $\Ol_S$-module freely generated by $\pi(\frac{\partial}{\partial z^j})$
and $\pi([z^s]_2\frac{\partial}{\partial z^t})$, where $\pi:\T_{M,S(1)}^{\F}\to\mathcal{A}$ is the quotient map.
Moreover, $\mathcal{I}_S\T_{M,S(1)}^{\F}$ lies in the kernel of $\Theta_1$ so $\Theta_1$ factors through a map that we
will denote again by $\Theta_1:\mathcal{A}\to \F,$ which is clearly an $\Ol_S$-morphism.
\end{enumerate}
\end{proof}
\begin{Definition}
Let $S$ be a complex submanifold of a complex manifold $M$ and let $\F$ be a foliation of $S$.
The \textbf{Atiyah sheaf of $\F$} is the locally free $\Ol_S$-module
\[\mathcal{A}=\T_{M,S(1)}^{\F}/\mathcal{I}_S\T_{M,S(1)}^{\F}.\]
\end{Definition}
\begin{Theorem}\label{TheoAtiyahSplitting}
Let $S$ be a codimension $m$ submanifold of a complex manifold $M$ of dimension $n$ and let $\F$ be a foliation of $S$.
Then there exists a natural exact sequence of locally free $\Ol_S$-modules
\[\xymatrix{ 0\ar[r] &\Hom(\mathcal{N}_S,\mathcal{N}_{\F,M})\ar[r]&\mathcal{A}\ar[r]^{\Theta_1}&\F\ar[r]&0 }.\]
whose splitting is equivalent to the splitting of the sequence \eqref{seqAtipart} taking as $E$
$\N_{\F,M}$.
\end{Theorem}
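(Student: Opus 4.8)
The plan is to prove the statement in two stages: first build the short exact sequence by identifying the kernel of $\Theta_1$, and then compare the two splitting problems by producing an explicit morphism of sequences.

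For the exact sequence, I would start from Lemma \ref{BenDef}, which already gives that $\Theta_1\colon\mathcal{A}\to\F$ is a surjective $\Ol_S$-morphism, so only the kernel remains to be computed. Since $\Theta_1=\id\otimes\theta_1$ kills exactly the coefficients lying in $\mathcal{I}_S/\mathcal{I}_S^2$, one has at the level of $\T_{M,S(1)}$ that $\ker(\Theta_1\colon\T_{M,S(1)}\to\T_{M,S})=\T_{M,S}\otimes_{\Ol_S}(\mathcal{I}_S/\mathcal{I}_S^2)=\T_{M,S}\otimes_{\Ol_S}\N_S^{*}$, and every such element satisfies the coordinate criterion of Remark \ref{incoord}, hence already lies in $\T_{M,S(1)}^{\F}$. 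Passing to the quotient $\mathcal{A}=\T_{M,S(1)}^{\F}/\mathcal{I}_S\T_{M,S(1)}^{\F}$ and using that, in the same coordinate description, $\mathcal{I}_S\T_{M,S(1)}^{\F}$ is precisely the image of $\F\otimes_{\Ol_S}\N_S^{*}$ inside $\T_{M,S}\otimes_{\Ol_S}\N_S^{*}$, I obtain
\[\ker(\Theta_1\colon\mathcal{A}\to\F)=\frac{\T_{M,S}\otimes_{\Ol_S}\N_S^{*}}{\F\otimes_{\Ol_S}\N_S^{*}}=(\T_{M,S}/\F)\otimes_{\Ol_S}\N_S^{*}=\N_{\F,M}\otimes_{\Ol_S}\N_S^{*}=\Hom(\N_S,\N_{\F,M}).\]
Naturality of the resulting sequence is then immediate, as every arrow is assembled from the canonical operations $\otimes$, $\ker$ and quotient, none of which requires a choice of chart.

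For the comparison of splittings, I would produce an explicit morphism from this sequence to \eqref{seqAtipart} with $E=\N_{\F,M}$. Given $[u]\in\mathcal{A}$ represented by $u\in\T_{M,S(1)}^{\F}$, define an operator $D^{[u]}$ on $\N_{\F,M}$ by $D^{[u]}(\pr s)=\pr([u,s])$, where $s$ lifts a section of $\N_{\F,M}$ to $\T_{M,S(1)}$ and $[u,s]$ is the bracket of the Remark following Lemma \ref{Algebroide}, which is well defined in $\T_{M,S}$. The Leibniz rule for that bracket shows $D^{[u]}$ is a first-order operator with symbol $\Theta_1(u)\in\F$, while part \ref{benDefConn} of Lemma \ref{BenDef} guarantees both that $D^{[u]}$ depends only on the class $[u]$ and that $[u]\mapsto D^{[u]}$ is injective. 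This yields an injective $\Ol_S$-morphism $\Phi\colon\mathcal{A}\to\mathcal{A}_{\N_{\F,M},\F}$ commuting with the projections onto $\F$, and on kernels $\Phi$ restricts to the natural inclusion $j\colon\Hom(\N_S,\N_{\F,M})\hookrightarrow\Hom(\N_{\F,M},\N_{\F,M})$ dual to the surjection $\N_{\F,M}\twoheadrightarrow\N_S$ induced by $\F\subseteq\T_S$. Consequently any splitting $\sigma$ of the new sequence gives a splitting $\Phi\circ\sigma$ of \eqref{seqAtipart}, which disposes of one implication immediately.

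The step I expect to be the main obstacle is the converse: a splitting of \eqref{seqAtipart} need not a priori lie in the image of the non-surjective $\Phi$, whose cokernel is $\Hom(\T_S/\F,\N_{\F,M})$. To resolve this I would compute the Atiyah obstruction in an atlas adapted to $S$ and $\F$ refined as in Corollary \ref{Frobenius}. In such charts one has $\partial z^t_\beta/\partial z^i_\alpha\in\mathcal{I}_S$ for $t$ normal to $\F$ and $i$ tangent to $\F$, so a direct differentiation shows that the mixed second derivatives $\partial^2 z^t_\beta/(\partial z^i_\alpha\partial z^w_\alpha)|_S$ occurring in the cocycle recalled in the Remark computing the Atiyah class of $\N_{\F,M}$ vanish whenever $w$ runs over the $\T_S/\F$ directions, and survive only when $w$ is normal to $S$. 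Thus the obstruction to splitting \eqref{seqAtipart} is represented by a Čech--deRham cocycle already taking values in the subsheaf $\Hom(\N_S,\N_{\F,M})$, where it coincides with the obstruction to splitting the new sequence; equivalently, $\mathrm{im}\,\Phi$ is exactly the set of partial connections restricting to the canonical Bott connection on $\T_S/\F$. Since that Bott connection is intrinsic, a given splitting of \eqref{seqAtipart} can be corrected, without altering its symbol, to agree with Bott along $\T_S/\F$, landing in $\mathrm{im}\,\Phi$ and hence pulling back through the injective $\Phi$ to a splitting of the new sequence. The delicate point to check carefully is precisely that this correction can be carried out globally, i.e.\ that the coboundary realizing the splitting may be chosen within the subsheaf $\Hom(\N_S,\N_{\F,M})$; the special form of the transition functions from Corollary \ref{Frobenius} is what makes this possible, and comparison with Definition \ref{CondStar} then recovers the geometric reading that both sequences split exactly when $\F$ extends to the first infinitesimal neighborhood.
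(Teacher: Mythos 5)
Your construction of the sequence and the forward implication are sound, and at bottom they follow the same route as the paper. The identification of $\ker\Theta_1$ with $\Hom(\N_S,\N_{\F,M})$ via tensor-exactness ($\ker\Theta_1=\T_{M,S}\otimes\N_S^*$, quotiented by $\mathcal{I}_S\T_{M,S(1)}^{\F}=\F\otimes\N_S^*$, using Remark \ref{incoord}) is a cleaner, chart-free version of what the paper obtains by computing the transition functions of the generators $\pi([z^s_{\alpha}]_2\partial/\partial z^t_{\alpha})$; your morphism $\Phi$ is essentially the universal connection of Proposition \ref{PropHolConnection} repackaged as a morphism of Atiyah sequences, and your decisive computation -- that in suitable charts the mixed second derivatives occurring in the obstruction cocycle of \eqref{seqAtipart} survive only in the directions normal to $S$, so that cocycle takes values in the subsheaf $\Hom(\N_S,\N_{\F,M})$ where it coincides with \eqref{eq:ObstructionNormal} -- is exactly the paper's central step.

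The genuine problem is your appeal to Corollary \ref{Frobenius}, which is circular: that corollary presupposes a foliation of $S(1)$, i.e.\ that $\F$ already extends to the first infinitesimal neighborhood, and in the converse direction (splitting of \eqref{seqAtipart} implies splitting of the new sequence) no such extension is available -- producing one is essentially what the splitting you are trying to establish delivers (Corollary \ref{CorollaryHolomorphicAction}). For the first use this is harmless, because the only fact you extract, $\partial z^t_{\beta}/\partial z^i_{\alpha}\in\mathcal{I}_S$ for $t$ transverse to $\F$ and $i$ tangent to $\F$, already holds in any atlas merely adapted to $S$ and $\F$ in the sense of Definition \ref{Def:AdaptedFS} (it expresses that $\partial/\partial z^i_{\alpha}|_S$ is an $\Ol_S$-combination of the $\partial/\partial z^j_{\beta}|_S$), and this is precisely what the paper uses; with that citation repaired your cocycle identification agrees with the paper's proof. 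But your closing step -- correcting an arbitrary splitting of \eqref{seqAtipart} so that it restricts to the Bott connection on $\T_S/\F$, i.e.\ lifting the global discrepancy section of $\F^*\otimes\Hom(\T_S/\F,\N_{\F,M})$ to $\F^*\otimes\Hom(\N_{\F,M},\N_{\F,M})$ -- is again justified only by ``the special form of the transition functions from Corollary \ref{Frobenius}'', so as written the converse is not proved. You should be aware that the paper does not resolve this point either: having computed both obstructions with the same coordinate-induced local splittings and identified the two cocycles, it reads off the equivalence directly. So the subtlety you flag is real and goes beyond the paper's own level of detail, but your proposed resolution assumes exactly what is to be proven; either argue the lifting directly, or conclude at the cocycle level as the paper does.
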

\begin{proof}
We work in a chart adapted to $S$ and $\F$. The kernel of $\Theta_1$ is locally freely generated by the images under
$\pi:\T_{M,S(1)}^{\F}\to\mathcal{A}$ of $[z^s_{\alpha}]_2\frac{\partial}{\partial z^t_{\alpha}}$. We would like to
understand how the generators behave under change of coordinates, to see if $\ker(\Theta_1)$ is isomorphic to any known sheaf of sections of a known vector bundle.
We compute the coordinate change maps:
\begin{align}
\pi\bigg([z^s_{\alpha}]_2\frac{\partial}{\partial z^{t}}\bigg)&=\pi\bigg([z^s_{\alpha}]_2\bigg[\frac{\partial
z^k_{\beta}}{\partial z^{t}_{\alpha}}\bigg]_2\frac{\partial}{\partial
z^k_{\beta}}\bigg)=\pi\bigg([z^s_{\alpha}]_2\bigg[\frac{\partial z^w_{\beta}}{\partial
z^{t}_{\alpha}}\bigg]_2\frac{\partial}{\partial z^w_{\beta}}\bigg)\label{EqThSplitAtiyahPrimaEq} \\ 
&=\pi\bigg(\bigg[\frac{\partial z^s_{\alpha}}{\partial z^{s_1}_{\beta}}\bigg]_2[z^{s_1}_{\beta}]_2\bigg[\frac{\partial
z^w_{\beta}}{\partial z^{t}_{\alpha}}\bigg]_2\frac{\partial}{\partial
z^w_{\beta}}\bigg)\nonumber\\&=\bigg[\frac{\partial
z^s_{\alpha}}{\partial z^{s_1}_{\beta}}\frac{\partial z^w_{\beta}}{\partial
z^{t}_{\alpha}}\bigg]_1\pi\bigg([z^{s_1}_{\beta}]_2\frac{\partial}{\partial
z^w_{\beta}}\bigg),\label{EqThSplitAtiyahTerzaEq}
\end{align}
where the last equality in \eqref{EqThSplitAtiyahPrimaEq} comes from the quotient map and the one in
\eqref{EqThSplitAtiyahTerzaEq} comes from the newly acquired structure of $\Ol_S$-module. As a consequence, the kernel
of
$\Theta_1$ is isomorphic to $\Hom(\mathcal{N}_S,\mathcal{N}_{\F,M})$.
Now, if we define local splittings of the sequence by setting 
\[\sigma_{\alpha}\bigg(\frac{\partial}{\partial z^j_{\alpha}}\bigg)=\pi\bigg(\frac{\partial}{\partial
z^j_{\alpha}}\bigg),\]
for $j=m+1,\ldots,m+l$,
and extending by $\Ol_S$-linearity, we can compute the obstruction to find a splitting of the sequence:
\begin{align}
(\sigma_{\beta}-\sigma_{\alpha})\bigg(\frac{\partial}{\partial
z^j_{\beta}}\bigg)&=\sigma_{\beta}\bigg(\frac{\partial}{\partial
z^j_{\beta}}\bigg)-\sigma_{\alpha}\bigg(\bigg[\frac{\partial z^i_{\alpha}}{\partial
z^j_{\beta}}\bigg]_1\frac{\partial}{\partial z^i_{\alpha}}\bigg)\nonumber\\
&=\sigma_{\beta}\bigg(\frac{\partial}{\partial z^j_{\beta}}\bigg)-\bigg[\frac{\partial
z^i_{\alpha}}{\partial z^j_{\beta}}\bigg]_1\sigma_{\alpha}\bigg(\frac{\partial}{\partial
z^i_{\alpha}}\bigg)\nonumber\\
&=\pi\bigg(\frac{\partial}{\partial z^j_{\beta}}\bigg)-\bigg[\frac{\partial z^i_{\alpha}}{\partial
z^j_{\beta}}\bigg]_1\pi\bigg(\frac{\partial}{\partial z^i_{\alpha}}\bigg)\nonumber\\
&=\pi\bigg(\bigg[\frac{\partial z^t_{\alpha}}{\partial z^j_{\beta}}\bigg]_2\frac{\partial}{\partial z^t_{\alpha}}\bigg)
=\pi\bigg(\bigg[\frac{\partial^2 z^t_{\alpha}}{\partial z^r_{\beta}\partial
z^j_{\beta}}z^r_{\beta}\bigg]_2\frac{\partial}{\partial z^t_{\alpha}}\bigg)\nonumber\\
&=\bigg[\frac{\partial^2 z^t_{\alpha}}{\partial z^r_{\beta}\partial z^j_{\beta}}\frac{\partial z^r_{\beta}}{\partial
z^s_{\alpha}}\bigg]_1\pi\bigg([z^s_{\alpha}]_2\frac{\partial}{\partial
z^t_{\alpha}}\bigg)\label{eq:ObstructionNormal}.
\end{align}
Please remark that, since $\partial z^t_{\alpha}/\partial z^j_{\beta}$ lies in the ideal
$\mathcal{I}_S$ for
$t=1,\ldots,m,m+l+1,\ldots,n$ and $j=m+1,\ldots,m+l$ it follows that
\[\frac{\partial^2 z^t_{\alpha}}{\partial z^p_{\beta}\partial z^j_{\beta}}\in \mathcal{I}_S\]
for $t=1,\ldots,m,m+l+1,\ldots,n$, $j=m+1,\ldots,m+l$ and $p=m+1,\ldots,n$.
Therefore we have that
\[\bigg[\frac{\partial^2 z^t_{\alpha}}{\partial z^w_{\beta}\partial z^j_{\beta}}\bigg]_1=[0]_1\]
for $t,w=1,\ldots,m,m+l+1,\ldots,n$ and $j=m+1,\ldots,m+l$ if and only if
\[\bigg[\frac{\partial^2 z^t_{\alpha}}{\partial z^r_{\beta}\partial z^j_{\beta}}\bigg]_1=[0]_1\]
for $t,w=1,\ldots,m,m+l+1,\ldots,n$, $j=m+1,\ldots,m+l$ and $r=1,\ldots,m$.
Hence, class \eqref{eq:ObstructionNormal} vanishes if and only if \eqref{seqAtipart} splits.
\end{proof}
It is easily noted that in the case $\F$ is the tangent bundle to $S$ the Atiyah sheaf of $\F$ is nothing else than the
Atiyah sheaf of $S$, defined in \cite{ABTHolMapFol}.

\begin{Definition}
Let $\F$ be a sheaf of $\Ol_S$-modules over a complex manifold $S$, equipped with a $\Ol_S$-morphism $X:\F\to\T_S$. We
say that $\F$ is a \textbf{Lie algebroid of anchor $X$} if there is a $\mathbb{C}$-bilinear map $\{\ \cdot\ ,\,\cdot\
\}:\F\oplus\F\to\F$ such that:
\begin{enumerate}
\item $\{v,u\}=-\{u,v\}$;
\item $\{u,\{v,w\}\}+\{v,\{w,u\}\}+\{w,\{u,v\}\}=0$;
\item $\{g\cdot u, v\}=g\cdot \{u,v\}-X(v)(g)\cdot u$ for all $g\in\Ol_S$ and $u,v\in\F.$
\end{enumerate}
\end{Definition}
\begin{Definition}
Let $\mathcal{E}$ and $\mathcal{F}$ be locally free sheaves of $\Ol_S$-modules over a complex manifold $S$. Given a section $X\in H^0(S,\T_S\otimes\F^*)$, a \textbf{holomorphic $X$-connection on $\mathcal{E}$} is a $\mathbb{C}$-linear map $\tilde{X}:\mathcal{E}\to\F^*\otimes\mathcal{E}$ such that:
\[\tilde{X}(g\cdot s)=X^*(dg)\otimes s+g\tilde{X}(s),\]
for each $g\in\Ol_S$ and $s\in\mathcal{E}$, where $X^*$ is the dual map of $X$. The notation $\tilde{X}_v(s)$  is equivalent to $\tilde{X}(s)(v)$.

If $\F$ is a Lie algebroid of anchor $X$ we define the \textbf{curvature} of $\tilde{X}$ to be:
\[R_{u,v}(s)=\tilde{X}_u\circ\tilde{X}_v(s)-\tilde{X}_v\circ\tilde{X}_u(s)-\tilde{X}_{\{u,v\}}(s).\]
We say that $\tilde{X}$ is \textbf{flat} if $R\equiv 0$.
\end{Definition}

\begin{Proposition}\label{PropHolConnection}
Let $S$ be a complex submanifold of a complex manifold $M$ and $\F$ a holomorphic foliation of $S$. Then:
\begin{enumerate}
 \item the Atiyah sheaf of $\F$ has a natural structure of Lie algebroid of anchor $\Theta_1$ such that
 \[\Theta_1\{q_1,q_2\}=[\Theta_1(q_1),\Theta_1(q_2)]\]
for all $q_1,q_2\in\mathcal{A};$
\item there is a natural holomorphic $\Theta_1$-connection $\tilde{X}:\mathcal{N}_{\F,M}\to\mathcal{A}^*\otimes
\mathcal{N}_{\F,M}$ on $\N_{\F,M}$ given by
\[\tilde{X}_q(s)=\pr([v,\tilde{s}])\]
for all $q\in\mathcal{A}$ and $s\in N_{\F,M}$, where $v\in\T_{M,S(1)}^{\F}$ and $\tilde{s}\in\T_{M,S(1)}$ are such that
$\pi(v)=q$ and $\pr\circ\Theta_1(\tilde{s})=s;$
\item this holomorphic $\Theta_1$-connection is flat.
\end{enumerate}
\end{Proposition}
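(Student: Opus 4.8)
The plan is to handle the three assertions in turn, building everything on the bracket $\{\,\cdot\,,\,\cdot\,\}$ of Lemma \ref{Algebroide} on $\T_{M,S(1)}^{\F}$ and on the two characterizations in Lemma \ref{BenDef}. For assertion (1) I would first descend the bracket from $\T_{M,S(1)}^{\F}$ to the quotient $\mathcal{A}=\T_{M,S(1)}^{\F}/\mathcal{I}_S\T_{M,S(1)}^{\F}$. Since $\{u,v\}\in\mathcal{I}_S\T_{M,S(1)}^{\F}$ whenever $u\in\mathcal{I}_S\T_{M,S(1)}^{\F}$ by Lemma \ref{BenDef}(2), the bracket passes to $\mathcal{A}$; antisymmetry and the Jacobi identity are then inherited directly from Lemma \ref{Algebroide}(2a)--(2b). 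For the anchor--Leibniz identity I would lift $g\in\Ol_S$ to some $\hat g\in\Ol_{S(1)}$ and apply property (2c), $\{\hat g u,v\}=\hat g\{u,v\}-v(\hat g)u$; projecting to $\mathcal{A}$, using that the $\Ol_S$-action on $\mathcal{A}$ is the one induced from $\Ol_{S(1)}$ and the coordinate identity $[v(\hat g)]_1=\Theta_1(v)(g)$, yields $\{g q_1,q_2\}=g\{q_1,q_2\}-\Theta_1(q_2)(g)\,q_1$. The compatibility $\Theta_1\{q_1,q_2\}=[\Theta_1 q_1,\Theta_1 q_2]$ is just Lemma \ref{Algebroide}(2d) read in the quotient.

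For assertion (2) the essential work is the well-definedness of $\tilde X_q(s)=\pr([v,\tilde s])$, where $[v,\tilde s]$ is the $\T_{M,S}$-valued bracket of the Remark following Lemma \ref{Algebroide}, $\pi(v)=q$ and $\pr\circ\Theta_1(\tilde s)=s$. Independence of the chosen lift $\tilde s$ of $s$: two lifts differ by some $w\in\T_{M,S(1)}$ with $\pr\circ\Theta_1(w)=0$, i.e.\ $w\in\T_{M,S(1)}^{\F}$, and then $[v,w]=\Theta_1\{v,w\}=[\Theta_1 v,\Theta_1 w]$ lies in $\F=\ker\pr$ because $\F$ is involutive, so $\pr([v,w])=0$. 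Independence of the lift $v$ of $q$ is exactly Lemma \ref{BenDef}(1): two lifts differ by an element of $\mathcal{I}_S\T_{M,S(1)}^{\F}$, whose bracket with any $\tilde s\in\T_{M,S(1)}$ projects to zero under $\pr$. The Leibniz rule then follows by expanding $[v,\hat g\tilde s]=v(\hat g)\tilde s+\hat g[v,\tilde s]$ and applying the $\Ol_S$-linear map $\pr$, using $[v(\hat g)]_1=\Theta_1(q)(g)$ to get $\tilde X_q(g\cdot s)=\Theta_1(q)(g)\,s+g\,\tilde X_q(s)$; the $\Ol_S$-linearity of $q\mapsto\tilde X_q(s)$ (needed so that $\tilde X(s)\in\mathcal{A}^*\otimes\N_{\F,M}$) comes symmetrically from $[\hat g v,\tilde s]=\hat g[v,\tilde s]-\tilde s(\hat g)v$ together with $\Theta_1(v)\in\ker\pr$.

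For assertion (3) I expect the computation to collapse to the Jacobi identity for genuine vector fields. Having established that $\tilde X$ is independent of all choices, I would evaluate $R_{u,v}(s)$ on convenient representatives: lift $u,v$ to logarithmic fields $\bar u,\bar v\in\T_M(\log S)$ and $s$ to a field $\bar s\in\T_M$, so that $\tilde X_v(s)=\pr([\bar v,\bar s]\otimes[1]_1)$ where $[\bar v,\bar s]$ is the ordinary Lie bracket on $M$. Using $[\bar v,\bar s]$ as an extension of a lift of $\tilde X_v(s)$ gives $\tilde X_u\tilde X_v(s)=\pr([\bar u,[\bar v,\bar s]]\otimes[1]_1)$, and symmetrically for $\tilde X_v\tilde X_u(s)$; moreover $\{u,v\}$ lifts to $[\bar u,\bar v]\otimes[1]_2$, whose logarithmic extension is $[\bar u,\bar v]$, whence $\tilde X_{\{u,v\}}(s)=\pr([[\bar u,\bar v],\bar s]\otimes[1]_1)$. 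The Jacobi identity $[\bar u,[\bar v,\bar s]]-[\bar v,[\bar u,\bar s]]=[[\bar u,\bar v],\bar s]$ on $\T_M$, applied after $\pr(\,\cdot\otimes[1]_1)$, then gives $R_{u,v}(s)=0$.

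The main obstacle is bookkeeping across the two levels of lifting: objects are only defined modulo $\mathcal{I}_S^2$, and the bracket $[v,\tilde s]$ is a priori well defined only in $\T_{M,S}$, so one must verify at each step that the relevant expression survives to the correct order. In particular, the legitimacy of the convenient representatives chosen in (3) rests entirely on the independence-of-lifts statements proved in (2), and those in turn rest on involutivity of $\F$ and on Lemma \ref{BenDef}(1); so I would present (2) carefully and then treat (3) as a short formal consequence of the Jacobi identity.
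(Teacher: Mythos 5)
Your proposal is correct and follows essentially the same route as the paper's proof: part (1) descends the bracket of Lemma \ref{Algebroide} to $\mathcal{A}$ via Lemma \ref{BenDef}(2) and inherits the algebroid axioms, part (2) proves well-definedness using Lemma \ref{BenDef}(1) for the first argument and the involutivity/closure of the bracket on $\T_{M,S(1)}^{\F}$ for the second, then checks the two Leibniz rules exactly as the paper does, and part (3) uses the independence of lifts to reduce flatness to the Jacobi identity. The only (harmless) difference is that in (3) you lift to genuine logarithmic vector fields on $M$ before iterating brackets, which makes explicit the bookkeeping that the paper compresses into ``computing in coordinates, it follows from the usual Jacobi identity.''
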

\begin{proof}
\begin{enumerate}
 \item We set
\[\{q_1,q_2\}=\pi(\{v_1,v_2\}),\]
where $v_i\in\T_{M,S(1)}^{\F}$ are such that $q_i=\pi(v_i),$ for $i=1,2$.
This is well defined: if $q_1=0,$ then $v_1$ is in $\mathcal{I}_S\T_{M,S(1)}^{\F}$ and then, by \ref{benDef} of Lemma
\ref{BenDef} we have
that 
$\{q_1,q_2\}=0$. The other properties follow directly from Lemma \ref{Algebroide}.
\item We check the connection is well defined. Suppose now $q=0$; this means that
$v\in\mathcal{I}_S\T_{M,S(1)}^{\F}$; then, by Lemma \ref{BenDef}.\ref{benDefConn}, we have that
$\pr([v,\tilde{s}])=0$, for every $\tilde{s}\in\T_{M,S(1)}$. Now, if
$\pr\circ\Theta_1(\tilde{s})=0,$ we have that $\tilde{s}\in\T_{M,S(1)}^{\F}$, so $\{v,\tilde{s}\}$
is in $\T_{M,S(1)}^{\F}$, which implies that $\tilde{X}_q(s)=0$. \\
We check now it is a $\Theta_1$-connection. It is $\Ol_S$-linear in the first entry since:
\begin{align*}
\tilde{X}_{[f]_1\cdot q}(s)=\pr([[f]_2 v,\tilde{s}])=\pr([f]_1 [v,\tilde{s}]-\tilde{s}([f]_2)\Theta_1(v))=[f]_1
\tilde{X}_q(v),
\end{align*}
 where the last equality comes from the fact that $v$ belongs to $\T_{M,S(1)}^{\F},$ which is the kernel of
$\pr\circ\Theta_1$.
We check the $\Theta_1$-Leibniz rule for the second entry:
\begin{align*}
\tilde{X}_q([f]_1 s)&=\pr([v,[f]_2\tilde{s}])=\pr([f]_1[v,\tilde{s}]+v([f]_2)\cdot\Theta_1(\tilde{s}))\\
&=[f]_1\tilde{X}_q(s)+\Theta_1(q)([f]_1)\cdot s,
\end{align*}
where the last equality comes from the equality:
\[[v([f]_2)]_1=\Theta_1(v)([f]_1)=\Theta_1(\pi(v))([f]_1),\]
for every $[f]_2\in \Ol_{S(1)}$ and for every $v\in\T_{M,S(1)}^{\F}$. Thus, $\tilde{X}$ is a holomorphic
$\Theta_1$-connection.
\item We compute the curvature:
\begin{align*}
R_{q_1,q_2}(s)&=\tilde{X}_{q_1}\circ\tilde{X}_{q_2}(s)-\tilde{X}_{q_2}\circ\tilde{X}_{q_1}(s)-\tilde{X}_{\{q_1,q_2\}}(s)\\
&=\pr([u , \widetilde{\pr([v,\tilde{s}])}])-\pr([v,\widetilde{\pr([u,\tilde{s}])}])-\pr([[u,v],\tilde{s}]).
\end{align*}
As we proved before, the connection does not depend on the extension chosen for the second entry, so we can rewrite the expression as:
\[\pr([u,[v,\tilde{s}]])-\pr([v,[u,\tilde{s}]])-\pr([[u,v],\tilde{s}]).\]
Computing in coordinates, it follows from the usual Jacobi identity for vector fields that it is identically $0$.
\end{enumerate}
\end{proof}
\begin{Definition}
Let $S$ be a complex submanifold of a complex manifold $M$ and $\F$ a foliation of $S$. The holomorphic $\Theta_1$-
connection $\tilde{X}:\mathcal{N}_{\F,M}\to\mathcal{A}^*\otimes \mathcal{N}_{\F,M}$ just introduced is called
the \textbf{universal holomorphic connection} on $\N_{\F,M}$.
\end{Definition}
\begin{Corollary}\label{CorollaryHolomorphicAction}
Let $S$ be a submanifold of a complex manifold $M$ and suppose there exists a foliation $\F$ of the
first infinitesimal neighborhood of $S$. Then, there exists a flat partial holomorphic connection
$(\delta,\F)$ on $\N_{\F,M}$ along $\F$. 
\end{Corollary}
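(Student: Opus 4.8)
The plan is to reduce the corollary to the splitting of the Atiyah sequence of Theorem \ref{TheoAtiyahSplitting}, and then to build the connection by pulling back the universal holomorphic connection of Proposition \ref{PropHolConnection} along a splitting. Denote by $\F|_S$ the restriction to $S$ of the given foliation of $S(1)$; by the third condition of Definition \ref{Def:FolInf} this is a genuine rank $l$ foliation of $S$, so the Atiyah sheaf $\mathcal{A}$, the anchor $\Theta_1\colon\mathcal{A}\to\F|_S$ and the universal $\Theta_1$-connection $\tilde X\colon\N_{\F,M}\to\mathcal{A}^*\otimes\N_{\F,M}$ of Proposition \ref{PropHolConnection} are all available. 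The connection I want is $\delta_v(s):=\tilde X_{\sigma(v)}(s)$ for a suitable $\Ol_S$-linear splitting $\sigma\colon\F|_S\to\mathcal{A}$ of $\Theta_1$; essentially the whole argument consists in producing $\sigma$ and showing that it respects brackets.

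First I would produce $\sigma$. Because $\F$ is a foliation of $S(1)$, Corollary \ref{Frobenius} furnishes an atlas adapted to $S$ (and, by its construction, to $\F|_S$) in which \eqref{secondorder} holds; this is exactly the statement that $\F|_S$ extends to the first infinitesimal neighborhood in the sense of Definition \ref{CondStar}. The obstruction to gluing the local splittings $\sigma_\alpha$ built in the proof of Theorem \ref{TheoAtiyahSplitting} is the class \eqref{eq:ObstructionNormal}, which vanishes precisely under this extension condition; hence the $\sigma_\alpha$ glue to a global $\Ol_S$-linear splitting $\sigma\colon\F|_S\to\mathcal{A}$ with $\Theta_1\circ\sigma=\id$. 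In such an atlas $\F|_S$ is generated by the commuting coordinate fields $e_i:=\partial/\partial z^i_\alpha|_S$ ($i=m+1,\dots,m+l$), and $\sigma(e_i)=E_i:=\pi(\partial/\partial z^i_\alpha)$.

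The key step, and the one I expect to be the main obstacle, is to verify that $\sigma$ is a morphism of Lie algebroids, i.e. $\{\sigma(u),\sigma(v)\}=\sigma([u,v])$ for all $u,v\in\F|_S$. On the frame this is immediate: $\{E_i,E_j\}=\pi(\{\partial/\partial z^i,\partial/\partial z^j\})=0$ since the coordinate fields commute, and likewise $[e_i,e_j]=0$. Writing $u=f^ie_i$ and $v=g^je_j$ with $f^i,g^j\in\Ol_S$ and expanding both sides using the Lie-algebroid Leibniz rule of Proposition \ref{PropHolConnection} (in the form $\{gq_1,q_2\}=g\{q_1,q_2\}-\Theta_1(q_2)(g)q_1$) together with $\Theta_1(E_i)=e_i$, one finds that both $\{\sigma(u),\sigma(v)\}$ and $\sigma([u,v])$ reduce to $f^i(\partial g^j/\partial z^i)E_j-g^j(\partial f^i/\partial z^j)E_i$ on $S$, hence coincide. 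It is exactly here that the commuting-frame refinement of Lemma \ref{Lemma:Commute} underlying Corollary \ref{Frobenius} is indispensable: a generic splitting of $\Theta_1$ need not be bracket-compatible.

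Finally I would assemble the connection and check flatness. Set $\delta_v(s)=\tilde X_{\sigma(v)}(s)$. That $\delta$ is an honest partial connection along $\F|_S$ is inherited from Proposition \ref{PropHolConnection}: $\Ol_S$-linearity in $v$ from that of $\sigma$ and $\tilde X$, and the Leibniz rule $\delta_v(gs)=v(g)s+g\,\delta_v(s)$ from the $\Theta_1$-Leibniz rule of $\tilde X$ together with $\Theta_1(\sigma(v))=v$. For flatness, comparing the curvature of $\delta$ with that of $\tilde X$ gives
\[
\delta_u\delta_v(s)-\delta_v\delta_u(s)-\delta_{[u,v]}(s)=R_{\sigma(u),\sigma(v)}(s)+\tilde X_{\{\sigma(u),\sigma(v)\}-\sigma([u,v])}(s).
\]
The first summand vanishes because $\tilde X$ is flat by Proposition \ref{PropHolConnection}, and the second vanishes because $\sigma$ is a Lie-algebroid morphism. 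Thus $(\delta,\F|_S)$ is the desired flat partial holomorphic connection on $\N_{\F,M}$, which is what the statement calls a connection along $\F$.
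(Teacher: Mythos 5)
Your proof is correct, and its core coincides with the paper's own argument: in the adapted atlas supplied by Corollary \ref{Frobenius} the paper likewise defines the splitting chartwise by $\partial/\partial z^i_{\alpha}\mapsto\pi\big([1]_2\otimes\partial/\partial z^i_{\alpha}\big)$ and sets $\delta_v(s)=\tilde X_{\psi(v)}(s)=\pr([\psi(v),\tilde s])$. Where you differ is in bookkeeping, and in both places your version is the more complete one. First, the paper never verifies that its chartwise definition of $\psi$ is globally well defined on overlaps; that is exactly your gluing step, which works because in the adapted atlas the cocycle \eqref{eq:ObstructionNormal} is not merely null-cohomologous but identically zero (the extension condition puts $\partial z^t_{\alpha}/\partial z^i_{\beta}$ in $\mathcal{I}_S^2$, hence its first derivatives in $\mathcal{I}_S$). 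Second, the paper proves flatness by writing the curvature as $\pr\big([\tilde u,[\tilde v,\tilde s]]-[\tilde v,[\tilde u,\tilde s]]-[[\tilde u,\tilde v],\tilde s]\big)$ and invoking Jacobi; but identifying $\delta_{[u,v]}(s)$ with $\pr([[\tilde u,\tilde v],\tilde s])$ already presupposes the bracket-compatibility $\{\psi(u),\psi(v)\}=\psi([u,v])$, which the paper never states and which you isolate and prove as the Lie-algebroid-morphism step before deducing flatness from Proposition \ref{PropHolConnection}. So this is the same route as the paper's, with the two steps the paper leaves implicit made explicit.
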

\begin{proof}
We want to define now the splitting map between $\F|_S$ and $\mathcal{A}$; this is
really simple since in an atlas adapted to $S$ and $\F$ each of $[1]_2\otimes \partial/\partial
z^i_{\alpha}$ belongs to $\T_{M,S(1)}^{\F}$.
Therefore we define $\psi:\F|_S\to\mathcal{A}$ as
\[\psi: \frac{\partial}{\partial z^i_{\alpha}}\mapsto\pi\bigg([1]_2\otimes \frac{\partial}{\partial
z^i_{\alpha}}\bigg),\]
for each $i=m+1,\ldots,m+l$, where $\pi$ is the map from $\T_{M,S(1)}^{\F}$ to
$\mathcal{A}$.
We compute now the explicit form of the induced partial holomorphic connection.
Indeed, let $v$ belong to $\F$ and $s$ belong to $\N_{\F,M}$; since $\psi(v)$ belongs to
$\T_{M,S(1)}^{\F}$, if we take a lift $\tilde{s}$ of $s$ to $\T_{M,S(1)}^{\F}$, i.e.
$\pr\circ\Theta_1(\tilde{s})=s$, we have that the partial holomorphic connection $(\delta,\F)$ along
$\F$ induced by the universal holomorphic connection for $\N_{\F,M}$  is given by:
\[\delta_v(s)=\tilde{X}_{\psi(v)}(s)=\pr([\psi(v),\tilde{s}]).\]
We prove now this partial holomorphic connection is flat; indeed
\[\delta_u(\delta_v(s))-\delta_v(\delta_u(s))-\delta_{[u,v]}((s))=\pr([\tilde{u},[\tilde{v},\tilde{s
}]] - [\tilde{v},[\tilde{u},\tilde{s}]] - [[\tilde{u},\tilde{v}],\tilde{s}])=0,\]
by the Jacobi identity.
\end{proof}

\section{Splittings and foliations of the $1$-st infinitesimal neighborhood}\label{Sec2Split}
In this section we will deal with a stronger version of splitting (Definition \ref{split}). The main idea is that,
given a splitting of a submanifold, there exist maps which permit us to ``project'' vector fields transversal to the
first infinitesimal neighborhood into vector fields which are tangential to the first infinitesimal neighborhood. We
cite the following proposition which stems from a result of commutative algebra \cite[Proposition
16.2]{EisenbudCommAlg} which ties the splitting of the conormal sequence
\begin{equation*}
\xymatrix{0\ar[r]& \mathcal{I}_S / \mathcal{I}_S^2\ar[r]^{d_2}&\Omega_{M}\otimes \Ol_S\ar[r]^p
&\Omega_S\ar[r] & 0}
\end{equation*}
with the splitting of the following short exact sequences:
\begin{equation*}
\xymatrix{0\ar[r]& \mathcal{I}_S / \mathcal{I}_S^2\ar[r]^{i_1}&\Ol_{S(1)}\ar[r]^{\theta_1}
&\Ol_S\ar[r] & 0,}\\
\end{equation*}
and the normal sequence
\begin{equation*}
\xymatrix{0\ar[r]& \T_S \ar[r]^{\iota}&\T_{M,S}\ar[r]^{p_2} &\N_S\ar[r] & 0.}
\end{equation*}
\begin{Proposition}[\cite{ABTHolMapFol} Prop. 2.7]\label{Prop:Splitting}
Let $S$ be a reduced, globally irreducible subvariety of a complex manifold $M$. Then, there
exists is a $1-1$ correspondence among the following classes of morphisms:
\begin{itemize}
 \item morphisms $\sigma:\Omega_S\to\Omega_{M}\otimes \Ol_S$ of sheaves of $\Ol_S$-modules such that $p\circ\sigma=id$;
 \item morphisms $\tau:\Omega_{M,S}\to \mathcal{I}_S/\mathcal{I}_S^2$ of sheaves of $\Ol_S$-modules such that $\tau\circ
d_2=id$;
 \item $\theta_1$-derivations $\tilde{\rho}:\Ol_{S(1)}\to \mathcal{I}_S/\mathcal{I}_S^2$ such that $\tilde{\rho}\circ
i_1=id$, where $i_1$ is the canonical inclusion;
 \item morphisms $\rho:\Ol_S\to \Ol_{S(1)}$ of sheaves of rings such that $\theta_1\circ\rho=id$.
\end{itemize}
Moreover, if any of the former classes is not empty, then there is a $1-1$ correspondence with the following classes of
morphisms
\begin{itemize}
 \item morphisms $\tau^*:\N_S\to \T_{M,S}$ of sheaves of $\Ol_S$-modules such that $p_2\circ\tau^*=id$;
 \item morphisms $\sigma^*:\T_{M,S}\to\T_S$ of sheaves of $\Ol_S$-modules such that $\iota\circ\sigma^*$, where
$\iota:\T_S\to\T_{M,S}$ is the canonical inclusion.
\end{itemize}
\end{Proposition}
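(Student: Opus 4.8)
The plan is to split the four morphism classes of the first list into two elementary section/retraction dictionaries, to link them through the universal property of the module of differentials, and finally to reach the two classes of the second list by dualization. The elementary input is this: for any short exact sequence of $\Ol_S$-modules $0\to A\xrightarrow{f}B\xrightarrow{g}C\to0$, a section $s$ of $g$ and a retraction $r$ of $f$ determine each other uniquely through the idempotent decomposition $\id_B=s\circ g+f\circ r$. Applied to the conormal sequence this matches the $\sigma$ (sections of $p$) bijectively with the $\tau$ (retractions of $d_2$); applied to the square-zero sequence $0\to\mathcal{I}_S/\mathcal{I}_S^2\xrightarrow{i_1}\Ol_{S(1)}\xrightarrow{\theta_1}\Ol_S\to0$ it matches additive sections of $\theta_1$ with additive retractions $\tilde\rho$ of $i_1$; and, once the normal sequence is known to be exact, it matches the $\tau^*$ (sections of $p_2$) with the $\sigma^*$ (retractions of $\iota$). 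Throughout I use that $\mathcal{I}_S\cdot(\mathcal{I}_S/\mathcal{I}_S^2)=0$, so that $\mathcal{I}_S/\mathcal{I}_S^2$ is genuinely an $\Ol_S$-module and all the listed maps make sense.

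Next I would bridge the two sequences via Kähler differentials. Given a $\theta_1$-derivation $\tilde\rho:\Ol_{S(1)}\to\mathcal{I}_S/\mathcal{I}_S^2$ with $\tilde\rho\circ i_1=\id$, precomposition with the projection $\Ol_M\to\Ol_{S(1)}$ yields a $\mathbb{C}$-derivation of $\Ol_M$ into the $\Ol_M$-module $\mathcal{I}_S/\mathcal{I}_S^2$; by the universal property of $\Omega_M$ it factors through $d:\Ol_M\to\Omega_M$, and since the target is an $\Ol_S$-module it descends to an $\Ol_S$-linear map $\tau:\Omega_M\otimes\Ol_S\to\mathcal{I}_S/\mathcal{I}_S^2$ with $\tau\circ d_2=\id$. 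Conversely, composing a retraction $\tau$ with the universal derivation gives a derivation $\Ol_M\to\mathcal{I}_S/\mathcal{I}_S^2$ that annihilates $\mathcal{I}_S^2$ (again because $\mathcal{I}_S$ acts as $0$ on $\mathcal{I}_S/\mathcal{I}_S^2$), hence descends to the required $\theta_1$-derivation on $\Ol_{S(1)}$. This establishes the $\tau\leftrightarrow\tilde\rho$ correspondence.

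For the $\tilde\rho\leftrightarrow\rho$ correspondence I would note that a ring section $\rho$ produces the derivation $\tilde\rho=\id-\rho\circ\theta_1$, whose image lies in $\ker\theta_1=\mathcal{I}_S/\mathcal{I}_S^2$ and whose Leibniz rule is forced by multiplicativity of $\rho$ together with the square-zero identity $(\mathcal{I}_S/\mathcal{I}_S^2)^2=0$. Reading the same computation backwards shows that the additive section of $\theta_1$ attached to a $\theta_1$-derivation (via the dictionary of the first paragraph) is automatically multiplicative, i.e. a homomorphism of sheaves of rings. This closes the cycle among the four classes of the first list.

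The remaining, and in my view hardest, step is the passage to the second list. If any of the four classes is non-empty then $\sigma$ exhibits $\Omega_S$ as a direct summand of the locally free sheaf $\Omega_M\otimes\Ol_S$, so $\Omega_S$ is locally free; applying $\Hom(-,\Ol_S)$ to the now split conormal sequence produces the normal sequence, which is thereby forced to be exact and split, and carries $\sigma,\tau$ precisely to $\sigma^*=\Hom(\sigma,\Ol_S)$ and $\tau^*=\Hom(\tau,\Ol_S)$. This dualization is the delicate point: a priori $\Omega_S$ need not be locally free and the contravariant functor $\Hom(-,\Ol_S)$ is only left exact, so the right-exactness of the dualized sequence genuinely uses the splitting, and this is exactly where the reducedness and global irreducibility of $S$ are invoked to control $\Omega_S$. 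Everything else reduces to verifying that the explicit formulas above are mutually inverse, which is routine.
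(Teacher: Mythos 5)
First, a framing remark: the paper you are working from does not prove this proposition at all --- it imports it verbatim from \cite{ABTHolMapFol}, Prop.~2.7 --- so your attempt has to be judged against what the statement itself requires rather than against an in-paper argument.

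Much of your plan is correct and standard: the square-zero trick giving $\rho\leftrightarrow\tilde{\rho}$, the bridge $\tilde{\rho}\leftrightarrow\tau$ via the universal property of $\Omega_M$ (using $\mathcal{I}_S\cdot(\mathcal{I}_S/\mathcal{I}_S^2)=0$), and the direction $\tau\Rightarrow\sigma$ (the map $\id-d_2\circ\tau$ kills $\mathrm{im}\,d_2=\ker p$, hence factors through $p$) all work with no hypotheses on $S$. The genuine gap is the other half of your very first step. Your ``elementary input'' is a dictionary for \emph{short exact} sequences, but the conormal sequence is only exact in the weaker sense
$\mathcal{I}_S/\mathcal{I}_S^2\xrightarrow{d_2}\Omega_M\otimes\Ol_S\xrightarrow{p}\Omega_S\to 0$:
for a singular reduced, globally irreducible $S$ the map $d_2$ need \emph{not} be injective; its kernel is exactly the torsion of the conormal module, and this can be nonzero. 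Concretely, for the monomial curve $S=\{(t^3,t^4,t^5)\}\subset\mathbb{C}^3$, whose ideal is generated by the $2\times 2$ minors of $\left(\begin{smallmatrix} x&y&z\\ y&z&x^2\end{smallmatrix}\right)$, Hilbert--Burch gives a presentation $\mathcal{I}_S/\mathcal{I}_S^2\cong\Ol_S^3/\langle(x,y,z),(y,z,x^2)\rangle$ in which the class of $(z,x^2,xy)$ is nonzero but annihilated by $x$; so $\ker d_2\neq 0$ there. Consequently, given $\sigma$, the endomorphism $\id-\sigma\circ p$ does land in $\mathrm{im}\,d_2$, but producing $\tau$ requires lifting it through $d_2$, which is neither canonical nor in general possible when $d_2$ is not injective; and even injectivity of your assignment $\tau\mapsto\sigma$ fails a priori, since two retractions differing by $\bar{\delta}\circ p$ with $\bar{\delta}:\Omega_S\to\ker d_2$ induce the same $\sigma$. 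So the claimed bijection $\{\sigma\}\leftrightarrow\{\tau\}$ is unproved precisely in the singular case --- the only case in which the hypotheses of the proposition have any content.

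Relatedly, you invoke reducedness and global irreducibility in the wrong place. In the dualization step they are not needed: any additive functor, in particular $\Hom(\,\cdot\,,\Ol_S)$, carries split exact sequences to split exact sequences, so once the conormal sequence splits the normal sequence is split exact for free (what does need an argument there is bijectivity of $\sigma\mapsto\sigma^*$, i.e.\ reflexivity of the sheaves involved, which you leave unaddressed). Where the hypotheses must really enter is to repair the gap above, and one clean way is a lemma missing from your write-up: if some $\sigma$ exists, then $\sigma\circ p$ is an idempotent exhibiting $\Omega_S$ as a direct summand of the locally free sheaf $\Omega_M\otimes\Ol_S$; hence every stalk of $\Omega_S$ is free, the rank is constant (connectedness from global irreducibility) and equals the generic rank $\dim S$ (density of the regular part from reducedness), and a reduced complex space with $\Omega_S$ locally free of rank $\dim S$ is nonsingular. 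Once $S$ is known to be smooth, $d_2$ is injective, all the sheaves in both lists are locally free and reflexive, and your dictionaries apply verbatim; without an argument of this kind the proof does not go through.
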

\begin{Definition}\label{split}
Let $S$ be a reduced, globally irreducible subvariety of a complex manifold $M$. We say that $S$ \textbf{splits} in $M$
if there
exists a morphism of sheaves $\sigma: \Omega_S\to\Omega_{M,S}$ such that $p\circ\sigma= \id$ where
$p:\Omega_{M,S}\to\Omega_S$ is the canonical projection. 
\end{Definition}
\begin{Remark} In \cite{ABTHolMapFol} it is proved that a submanifold splits if and only if there exists an atlas
adapted to $S$ such that:
\[\bigg[\frac{\partial z^p_{\beta}}{\partial z^r_{\alpha}}\bigg]_1=0.\]
\end{Remark}
A natural generalization of the concept of splitting is the notion of $k$-splitting, developed in
\cite{ABTHolMapFol} and \cite{ABTEmbeddings}. We will use extensively the notion of $2$-splitting.
\begin{Definition}
Let $S$ be a submanifold of a complex manifold $M$. We shall say that $S$ \textbf{$k$-splits} into $M$ if and only if
there is
an infinitesimal retraction of $S(k)$ onto $S$, that is if there is a $k$-th order lifting
$\rho:\Ol_S\to\Ol_M/\mathcal{I}_S^{k+1}$ or in still other words, if the exact sequence
\begin{equation}\label{k-splitting}
0\to\mathcal{I}_S/\mathcal{I}_S^{k+1}\hookrightarrow\Ol_M/\mathcal{I}_S^{k+1}\to\Ol_M/\mathcal{I}
_S\to 0 
\end{equation}
splits as a sequence of sheaves of rings.
\end{Definition}
\begin{Remark}
Please note that, in case the sequence above splits, the map
$\tilde{\rho}:\Ol_M/\mathcal{I}_S^{k+1}\to\mathcal{I}_S/\mathcal{I}_S^{k+1}$ is a
$\theta_k$ derivation, i.e.,
\[\tilde{\rho}([fg]_{k+1})=\theta_k([f]_{k+1})\tilde{\rho}([g]_{k+1})+\theta_k([g]_{k+1})\tilde{\rho}([f]_{k+1}).\] 
The sheaf $\mathcal{I}_S/\mathcal{I}_S^{k+1}$ has  a natural structure of $\Ol_{S(1)}$-module: the
multiplications given by 
$[f]_{k}[h]_{k+1}=[fh]_{k+1}$ is well defined; indeed, let $[\tilde{f}_1]_{k+1}$ and
$[\tilde{f}_2]_{k+1}$ be two representatives of
$[f]_k$. Then $[\tilde{f}_2-\tilde{f}_1]_{k+1}$ belongs to $\mathcal{I}_S^k/\mathcal{I}_S^{k+1}$ and
therefore $[\tilde{f}_1 h]_{k+1}-[\tilde{f}_2
h]_{k+1}=[0]_{k+1}$ since $h$ belongs to $\mathcal{I}_S/\mathcal{I}_S^{k+1}$.
\end{Remark}
\begin{Remark}
Theorem 2.1 of \cite{ABTEmbeddings} proves that $S$ is $k$-splitting if and only if there exists a $k$-splitting atlas,
i.e. an atlas $\{U_{\alpha},z_{\alpha}\}$ adapted to a complex submanifold $S$  such that:
\[\frac{\partial z^p_{\beta}}{\partial z^r_{\alpha}}\in\mathcal{I}_S^k\]
for all $r=1,\ldots,m,p=m+1,\ldots,n$ and for each pair of indices
$\alpha,\beta$ such that $U_{\alpha}\cap U_{\beta}\cap S\neq\emptyset$.
\end{Remark}
\begin{Definition}\label{DefFfaithful}
If $\F$ is foliation of $M$ of rank $l$ strictly smaller than the dimension of $S$, if we denote by
$\sigma^*$ the map
from
$\T_{M,S}$ to $\T_{S}$ given in Proposition \ref{Prop:Splitting}, we shall denote by $\F^\sigma$ the
coherent sheaf of
$\Ol_S$-modules given by
\[\F^{\sigma}:=\sigma^*(\F|_{S}).\]
We shall say that $\sigma^{*}$ is \textbf{$\F$-faithful outside an analytic subset} $\Sigma \subset
S$ if
$\F^{\sigma}$ is a regular foliation of $S$ of rank $l$ on $S \setminus\Sigma$. If $\Sigma =
\emptyset$ we shall
simply say that $\sigma^*$ is \textbf{$\F$-faithful}.
\end{Definition}

We refer to \cite{ABTHolMapFol} for a treatment of $\F$-faithfulness in the case of splittings.
Assume that $\sigma^*$ is $\F$-faithful: an interesting question is whether there exists an analogue of $\sigma^*$ from
$\T_{M,S(1)}$ to
$\T_{S(1)}$, which restricted to $\T_{M,S}$ coincides with $\sigma^*$; this would permit us to
project a transversal foliation to a foliation of the first infinitesimal neighborhood.
 
First of all, we can suppose we are working on a splitting submanifold $S$.
\begin{Definition}
 We will call the sheaf $\T_{M,S(k)}:=\T_M\otimes \Ol_{S(k)}$ the \textbf{restriction of the ambient
tangent sheaf to
the $k$-th infinitesimal neighborhood}. 
\end{Definition}
We look for a splitting of the following sequence:
\begin{equation}\label{2split}
0\rightarrow \T_{S(1)}\rightarrow \T_{M,S(1)}\rightarrow \mathcal{N}_{S(1)}\rightarrow 0,
\end{equation}
where $\N_{S(1)}$ is the quotient of the two modules.
\begin{Remark}
Let
$(U_{\alpha},z^1_{\alpha},\ldots,z^n_{\alpha})$ be a
coordinate system adapted to $S$. Please remember Remark \ref{StruttLog}; since $S$ is a
submanifold the ideal of $S$ is
generated by
$z^1_{\alpha},\ldots, z^r_{\alpha}$ and we have
that $\T_{S(1)}$ is generated in $U_{\alpha}$ by:
\[ [z^r_{\alpha}]_2\frac{\partial}{\partial z^s_{\alpha}},\frac{\partial}{\partial
z^{m+1}_{\alpha}},\ldots,\frac{\partial}{\partial z^n_{\alpha}},\]
for $r,s$ varying in $1,\ldots,m$ while $\T_{M,S(1)}$ is generated on $U_{\alpha}$ by
\[\frac{\partial}{\partial z^1_{\alpha}},\ldots,\frac{\partial}{\partial z^n_{\alpha}}.\]
Let $\partial_{r,\alpha}$ be the image of $\partial/\partial z^r_{\alpha}$ in $\N_{S(1)}$ and
$\omega^r_{\alpha}$ its
dual element.
Now let \[v=[f^k_{\alpha}]_2\frac{\partial}{\partial z^k_{\alpha}}\]
be a section of $\T_{M,S(1)}$; we can see that the image of $v$ in $\N_{S(1)}$ is nothing else than
$[f^r_{\alpha}]_1\partial_{r,\alpha}$.
We denote by $[v]$ the equivalence class of $v$ in $\N_{S(1)}$; please note that, given a function
$[g]_2$ in $\Ol_{S(1)}$ the $\Ol_{S(1)}$-module structure is given by
\[[g]_2\cdot[v]=[\theta_1([g]_2)\cdot v].\]
We compute now the transition functions of $\N_{S(1)}$; if we are in an atlas adapted to $S$ we have
that
$z^s_{\alpha}=h^s_{\alpha\beta,r}z^r_{\beta}$.
We have that:
\begin{align}
\partial_{r,\alpha}&=\bigg[\frac{\partial}{\partial z^r_{\alpha}}\bigg]=\bigg[\frac{\partial
z^k_{\beta}}{\partial
z^r_{\alpha}}\frac{\partial}{\partial z^k_{\beta}}\bigg]=\bigg[\frac{\partial z^s_{\beta}}{\partial
z^r_{\alpha}}\frac{\partial}{\partial z^s_{\beta}}\bigg]=\bigg[\frac{\partial
(h^{s}_{\alpha\beta,r'}z^{r'}_{\beta})}{\partial
z^r_{\alpha}}\frac{\partial}{\partial z^s_{\beta}}\bigg]\nonumber\\ 
&=\bigg[\frac{\partial
(h^{s}_{\alpha\beta,r'})}{\partial
z^r_{\alpha}}z^{r'}_{\beta}\frac{\partial}{\partial
z^s_{\beta}}\bigg]+\bigg[h^{s}_{\alpha\beta,r'}\delta_r^{r'}\frac{\partial}{\partial
z^s_{\beta}}\bigg]=[h^{s}_{\alpha\beta,r}]_2\partial_{s,\beta},\nonumber
\end{align}
where last equality comes from the equivalence relations that define $\N_{S(1)}$.
\end{Remark}
\begin{Remark}
Please note that the transition functions for $(\N_{S(1)})^*$, as an $\Ol_{S(1)}$-module are
given by:
\[\omega^s_{\beta}=[h^s_{\alpha\beta,r}]_2\omega^r_{\alpha}.\]
Please note that $(\N_{S(1)})^*$ is isomorphic to $\mathcal{I}_S/\mathcal{I}_S^2$ with the structure
of $\Ol_{S(1)}$
module given by the projection $\theta_1:\Ol_{S(1)}\to\Ol_S$. 
\end{Remark}

\begin{Lemma}\label{2Split}
Let $M$ be a $n$-dimensional complex manifold, $S$ a submanifold of codimension $r$. Then sequence \eqref{2split} splits
if
 $S$ is $2$-splitting, i.e. there exists an atlas adapted to $S$ such that:
\begin{equation}\label{Cociclo2TangBundle}
\bigg[\frac{\partial z^p_{\beta}}{\partial z^r_{\alpha}}\bigg]_2\equiv [0]_2,
\end{equation}
for $p=m+1,\ldots,n$ and $r=1,\ldots,m$.
\end{Lemma}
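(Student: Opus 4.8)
The plan is to use the special atlas provided by the hypothesis to build the splitting chart by chart, and then to see that condition \eqref{Cociclo2TangBundle} is exactly what forces the local pieces to be chart–independent. First I would fix a $2$-splitting atlas, so that $\big[\partial z^p_{\beta}/\partial z^r_{\alpha}\big]_2=0$ on every overlap; being in particular a $1$-splitting atlas, it also furnishes a comorphism $\rho\colon\Ol_S\to\Ol_{S(1)}$ with $\theta_1\circ\rho=\id$, and I use $\rho$ to regard the $\Ol_{S(1)}$-modules $\T_{M,S(1)}$ and $\T_{S(1)}$ as sheaves of $\Ol_S$-modules. This is the right category in which to split \eqref{2split}, since $\N_{S(1)}$ is annihilated by $\mathcal{I}_S/\mathcal{I}_S^2$ and is therefore already a genuine $\Ol_S$-module (so no $\Ol_{S(1)}$-linear section can exist, and one must work over $\Ol_S$). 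Denote by $q\colon\T_{M,S(1)}\to\N_{S(1)}$ the projection appearing in \eqref{2split}.

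The key remark, and the reason $2$-splitting is the natural hypothesis, is that on an overlap $U_{\alpha}\cap U_{\beta}$ the normal coordinate field transforms as
\[\frac{\partial}{\partial z^r_{\alpha}}=\Big[\frac{\partial z^s_{\beta}}{\partial z^r_{\alpha}}\Big]_2\frac{\partial}{\partial z^s_{\beta}}+\Big[\frac{\partial z^p_{\beta}}{\partial z^r_{\alpha}}\Big]_2\frac{\partial}{\partial z^p_{\beta}},\]
and in a $2$-splitting atlas the second summand vanishes identically. Hence the local frames $\{\partial/\partial z^r_{\alpha}\}_{r=1}^m$ transform among themselves through the invertible normal block $\big[\partial z^s_{\beta}/\partial z^r_{\alpha}\big]_2$ and span a globally well defined rank $m$ subsheaf $\mathcal{R}$ of $\T_{M,S(1)}$ with $q(\mathcal{R})=\N_{S(1)}$. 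I would then define local sections $\tau_{\alpha}\colon\N_{S(1)}\to\T_{M,S(1)}$ by $\partial_{r,\alpha}\mapsto\partial/\partial z^r_{\alpha}$, extended by $\Ol_S$-linearity through $\rho$; that $q\circ\tau_{\alpha}=\id$ is immediate from the description of the quotient recalled in the Remark before the statement.

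The heart of the argument is to verify $\tau_{\alpha}=\tau_{\beta}$ on overlaps, which I would carry out by a \u{C}ech computation patterned on the one in the proof of Theorem \ref{TheoAtiyahSplitting}. Writing $\partial_{r,\alpha}=\big[\partial z^s_{\beta}/\partial z^r_{\alpha}\big]_1\,\partial_{s,\beta}$ in $\N_{S(1)}$ and comparing with the transformation of $\partial/\partial z^r_{\alpha}$ above, the discrepancy $\tau_{\alpha}-\tau_{\beta}$ splits into a term carrying the factor $\big[\partial z^p_{\beta}/\partial z^r_{\alpha}\big]_2$, which is annihilated by \eqref{Cociclo2TangBundle}, and a second, purely second-order term $\big(\big[\partial z^s_{\beta}/\partial z^r_{\alpha}\big]_2-\rho\big(\big[\partial z^s_{\beta}/\partial z^r_{\alpha}\big]_1\big)\big)\partial/\partial z^s_{\beta}$, which lies in $\T_{S(1)}$ and measures the compatibility of $\rho$ with the normal block of the Jacobian. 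I expect the main obstacle to be exactly this residual term: one must check that the comorphism attached to the $2$-splitting atlas lifts the restricted Jacobian to the correct class modulo $\mathcal{I}_S^2$, or equivalently that this $\Hom(\N_{S(1)},\T_{S(1)})$-valued cochain is a coboundary, so that after a harmless modification of the $\tau_{\alpha}$ the gluing holds. This is precisely the point at which the full second-order strength of $2$-splitting, and not merely $1$-splitting, is consumed, in complete analogy with Proposition \ref{Prop:Splitting} and \cite{ABTEmbeddings}; the bookkeeping of which classes are taken modulo $\mathcal{I}_S$ and which modulo $\mathcal{I}_S^2$ is the delicate part. Once the $\tau_{\alpha}$ are seen to glue, they define a global $\Ol_S$-linear section of $q$, and \eqref{2split} splits.
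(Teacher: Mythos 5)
Your overall strategy is the same as the paper's: lift the frame by $\partial_{r,\alpha}\mapsto\partial/\partial z^r_{\alpha}$ chart by chart and compare on overlaps, killing the component along $\partial/\partial z^p_{\beta}$ by \eqref{Cociclo2TangBundle}. The gap is exactly the step you defer. The residual term is not a loose end ``to be checked'': in your formulation it \emph{is} the whole obstruction, and it is not controlled by the hypothesis of the lemma. Indeed
\[
\bigg[\frac{\partial z^s_{\beta}}{\partial z^r_{\alpha}}\bigg]_2-\rho\bigg(\bigg[\frac{\partial z^s_{\beta}}{\partial z^r_{\alpha}}\bigg]_1\bigg)
=\tilde{\rho}\bigg(\bigg[\frac{\partial z^s_{\beta}}{\partial z^r_{\alpha}}\bigg]_2\bigg),
\]
so this term measures the normal derivatives $\partial^2 z^s_{\beta}/\partial z^{r'}_{\alpha}\partial z^r_{\alpha}$ modulo $\mathcal{I}_S$, about which \eqref{Cociclo2TangBundle} says nothing: that condition constrains only the mixed blocks $\partial z^p_{\beta}/\partial z^r_{\alpha}$. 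Concretely, the two--chart atlas $z^1_{\beta}=z^1_{\alpha}(1+z^1_{\alpha})$, $z^2_{\beta}=z^2_{\alpha}$, $S=\{z^1=0\}$, satisfies \eqref{Cociclo2TangBundle}, yet there your residual is $[2z^1_{\alpha}]_2\,\partial/\partial z^1_{\beta}\neq 0$, so your $\tau_{\alpha}$ do not glue and your argument produces no $0$-cochain correcting them. Your closing claim that the residual is ``precisely where the full second-order strength of $2$-splitting is consumed'' is therefore backwards (and inconsistent with your own previous sentence): the $2$-splitting hypothesis is consumed entirely in the term you already killed --- with $1$-splitting alone that term survives as the cocycle \eqref{ClassStrongSplit} --- whereas the vanishing of your residual is a strictly stronger, second-order normalization of the atlas, of the kind provided by the comfortably embedded condition of \cite{ABTEmbeddings}, not by $2$-splitting.

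The paper closes this same point by different bookkeeping, not by a cohomological correction. It never inserts $\rho$ into the lifts: the $0$-cochain is $\{\omega^r_{\alpha}\otimes\partial/\partial z^r_{\alpha}\}$ with coefficients kept as full Jacobian classes modulo $\mathcal{I}_S^2$, and on overlaps the normal-block contributions cancel \emph{exactly}, because in any splitting atlas
\[
\bigg[\frac{\partial z^s_{\alpha}}{\partial z^{r'}_{\beta}}\,\frac{\partial z^r_{\beta}}{\partial z^s_{\alpha}}\bigg]_2=[\delta^r_{r'}]_2,
\]
which is the content of the Remark immediately following the proof of Lemma \ref{2Split}. Hence $\tau_{\beta}-\tau_{\alpha}$ reduces to \eqref{ClassStrongSplit} alone, which vanishes identically under \eqref{Cociclo2TangBundle}, and the local splittings agree on the nose. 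To repair your proof you must either adopt this bookkeeping --- i.e.\ arrange the transition behaviour of your lifts to involve $[\partial z^s_{\beta}/\partial z^r_{\alpha}]_2$ itself rather than $\rho([\partial z^s_{\beta}/\partial z^r_{\alpha}]_1)$, which is exactly where the $\Ol_S$-versus-$\Ol_{S(1)}$ subtlety you rightly flag at the outset has to be confronted --- or else supply an actual proof that your residual cocycle is a coboundary; the latter does not follow from \eqref{Cociclo2TangBundle} by anything you have written.
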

\begin{proof}
We have to compute the image in $H^1(S,\Hom(\mathcal{N}_{S(1)},\T_{S(1)}))$ through the
coboundary operator of cochain $\{U_{\alpha}\cap
S,\omega^r_{\alpha}\otimes\partial_{r,\alpha} \}$ representing the identity in $H^0(\mathcal{U},
\Hom(\mathcal{N}_{S(1)},\mathcal{N}_{S(1)})).$
We compute then:
\begin{align}
\delta(U_{\alpha},\omega^r_{\alpha}\otimes\partial_{r,\alpha})=&\omega^r_{\beta}\otimes\frac{\partial}{\partial
z^r_{\beta}}-\omega^s_{\alpha}\otimes\frac{\partial}{\partial z^s_{\alpha}}\nonumber\\
=&\omega^r_{\beta}\otimes\frac{\partial}{\partial z^r_{\beta}}-\bigg[\frac{\partial z^s_{\alpha}}{\partial
z^{r'}_{\beta}}\frac{\partial z^k_{\beta}}{\partial
z^s_{\alpha}}\bigg]_2\omega^{r'}_{\beta}\otimes\frac{\partial}{\partial
z^k_{\beta}}\nonumber\\
=&\omega^r_{\beta}\otimes\frac{\partial}{\partial z^r_{\beta}}-\bigg[\frac{\partial z^s_{\alpha}}{\partial
z^{r'}_{\beta}}\frac{\partial z^r_{\beta}}{\partial z^s_{\alpha}}\bigg]_2
\omega^{r'}_{\beta}\otimes\frac{\partial}{\partial z^r_{\beta}}\nonumber\\&-\bigg[\frac{\partial z^p_{\beta}}{\partial
z^s_{\alpha}}\frac{\partial z^s_{\alpha}}{\partial
z^{r'}_{\beta}}\bigg]_2\omega^{r'}_{\beta}\otimes\frac{\partial}{\partial
z^p_{\beta}}\nonumber\\
=&-\bigg[\frac{\partial z^p_{\beta}}{\partial
z^s_{\alpha}}\frac{\partial z^s_{\alpha}}{\partial
z^{r'}_{\beta}}\bigg]_2\omega^{r'}_{\beta}\otimes\frac{\partial}{\partial
z^p_{\beta}}.\label{ClassStrongSplit}
\end{align}
This class is clearly zero if we are using a $2$-splitting atlas.
\end{proof}
\begin{Remark}
In the last equality of the computation above there is marginal subtle point. If $S$ is $2$-splitting then it is
splitting. We saw above that this implies that in an atlas adapted to $S$ and to the splitting
\[\frac{\partial z^p_{\alpha}}{\partial z^r_{\beta}}\in \mathcal{I}_S, \quad\frac{\partial
z^r_{\alpha}}{\partial z^p_{\beta}}\in
\mathcal{I}_S.\]
We know also that:
\[\frac{\partial z^k_{\alpha}}{\partial z^r_{\beta}}\frac{\partial z^s_{\beta}}{\partial z^k_{\alpha}}=\delta_r^s.\]
Restricting ourselves to the $1$-st infinitesimal neighborhood we have that:
\[[\delta_r^s]_2=\bigg[\frac{\partial z^k_{\alpha}}{\partial z^r_{\beta}}\frac{\partial z^s_{\beta}}{\partial
z^k_{\alpha}}\bigg]_2=\bigg[\frac{\partial z^{r'}_{\alpha}}{\partial z^r_{\beta}}\frac{\partial z^s_{\beta}}{\partial
z^{r'}_{\alpha}}\bigg]_2+\bigg[\frac{\partial z^p_{\alpha}}{\partial z^r_{\beta}}\frac{\partial z^s_{\beta}}{\partial
z^p_{\alpha}}\bigg]_2=\bigg[\frac{\partial z^{r'}_{\alpha}}{\partial z^r_{\beta}}\frac{\partial z^s_{\beta}}{\partial
z^{r'}_{\alpha}}\bigg]_2,\]
using the splitting hypothesis.
\end{Remark}
\begin{Remark}
Looking at how we have constructed the splitting in the former Lemma, if $\tilde{\rho}$ is the
$\theta_1$-derivation associated to the splitting of $S$, we have that the splitting morphism
$\sigma^*:\T_{M,S(1)}\to\T_{S(1)}$ is given in an atlas adapted to the two splitting by:
\[f^k\frac{\partial}{\partial z^k}\mapsto \tilde{\rho}(f^r)\frac{\partial}{\partial
z^r}+f^p\frac{\partial}{\partial z^p}.\] 
\end{Remark}
Now the natural question is under which conditions the splitting of sequence \eqref{2split} is
equivalent to
the existence of a $2$-splitting atlas. It seems like the splitting of this sequence is not
enough.
Indeed, if we try to follow the usual approach in proving the argument, as in Proposition
\cite[Theorem 2.1]{ABTEmbeddings}, we have some problems.
The first thing we can remark is that the dual of $\T_{M,S(1)}$
is nothing else than $\Omega_M\otimes \Ol_{S(1)}$. Now, a splitting of \eqref{2split} implies there
exists a map
$\gamma$ from $\Omega_M\otimes \Ol_{S(1)}$ to $(\N_{S(1)})^*$  and since 
we remarked that $(\N_{S(1)})^*$ is isomorphic to $\mathcal{I}_S/\mathcal{I}_S^2$ as an
$\Ol_{S(1)}$-module, through
a map
\[\tau:(\N_{S(1)})^*\to \mathcal{I}_S/\mathcal{I}_S^2.\]
this gives rise to a splitting of the map
\[d_2:\mathcal{I}_S/\mathcal{I}_S^2\to\Omega_M\otimes \Ol_{S(1)}\]
which sends an element $[f]_2$ of $\mathcal{I}_S/\mathcal{I}_S^2$ into $df\otimes [1]_2$.
Now, there exists a well defined map $d_3$ from $\Ol_M/\mathcal{I}_S^3$ to $\Omega_M\otimes
\Ol_{S(1)}$, which
sends a class
$[f]_3$ to $d\tilde{f}\otimes [1]_2$.
The big problem is that, even if we suppose $S$ comfortably embedded (\cite{ABTEmbeddings}), i.e.,
the sequence
\[\xymatrix{0\ar[r]&\mathcal{I}_S^2/\mathcal{I}_S^3
\ar[r]&\mathcal{I}_S/\mathcal{I}_S^3\ar[r]&\mathcal{I}_S/\mathcal{I}_S^2 \ar[r] & 0}\]
splits, we have that the splitting of
\eqref{2split} only gives us a map between $\Ol_{S(1)}$ and the image through the splitting
$\nu:\mathcal{I}_S/\mathcal{I}_S^3\to\mathcal{I}_S/\mathcal{I}_S^2$ of
$\mathcal{I}_S/\mathcal{I}_S^2$ in $\mathcal{I}_S/\mathcal{I}_S^3$ and this map is not surjective.
Therefore it is
not a $\theta_{2,1}$-derivation splitting the short exact sequence of morphisms of rings
\[\xymatrix{0\ar[r]&\mathcal{I}_S/\mathcal{I}_S^3\ar[r]&\Ol_{S(1)}\ar[r]&\Ol_S\ar[r]&0.}\]
\begin{Remark}
To solve this problem we could find under which conditions there exists a splitting of the map
$\tilde{d}_3:\mathcal{I}_S^2/\mathcal{I}_S^3\to\Omega_{M,S(1)}$. Using such a splitting and the
comfortable embedding we
could
find
a $\theta_{2,1}$-derivation splitting $\iota:\mathcal{I}_S/\mathcal{I}_S^3\to\Ol_{S(1)}$.
\end{Remark}

We define now a notion parallel to the one in Definition \ref{DefFfaithful}.
\begin{Definition}
If $\F$ is foliation of $M$ of rank $l$ strictly smaller than dimension $S$, if we denote by $\sigma^*_2$ the map from
$\T_{M,S(1)}$ to $\T_{S(1)}$ splitting sequence \eqref{2split}, we shall denote by $\F^{\sigma_2}$ the coherent sheaf of
$\Ol_S(1)$-modules given by
\[\F^{\sigma_2}:=\sigma^*_2(\F|_{S(1)}).\]
We shall say that $\sigma^{*}_2$ is \textbf{first order $\F$-faithful outside an analytic subset}
$\Sigma \subset S$ if
$\F^{\sigma_2}$
is a regular foliation of $S(1)$ of rank $l$ on $S \setminus\Sigma$. If $\Sigma = \emptyset$ we shall simply say that
$\sigma^*_2$ is \textbf{first order $\F$-faithful}.
\end{Definition}
We state some results coming from \cite{ABTHolMapFol}, in particular Lemma 7.5 and Lemma 7.6 about $\F$-faithfulness.
\begin{Lemma}[\cite{ABTHolMapFol}, Lemma 7.5]\label{Lemma:Faithfulness}
Let $S$ be a splitting submanifold of a complex manifold $M$ , and let $\F$ be a
holomorphic foliation on $M$ of dimension equal to $1$ or to the dimension of $S$. If there
exists $x_0\in  S \setminus \Sing(F)$ such that $\F$ is tangent to $S$ at $x_0$ , i.e., $(\F|_S)x_0\subset \T_{S,x_0}$,
then any splitting morphism is $\F$-faithful outside a suitable analytic subset of $S$.
\end{Lemma}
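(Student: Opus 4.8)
The plan is to reduce the statement to a local, pointwise computation about when the image of the foliation under the splitting morphism $\sigma^*$ has the right rank. Recall that $\F^{\sigma} = \sigma^*(\F|_S)$, and that $\sigma^*$ is $\F$-faithful outside $\Sigma$ precisely when $\F^{\sigma}$ is a regular rank $l$ foliation off $\Sigma$. Since we are in the situation $l = 1$ or $l = \dim S$, the tangency hypothesis at a single point $x_0$ will be promoted to an open (indeed generic) condition, and the failure locus will be cut out as an analytic subset.

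First I would fix an atlas adapted to $S$ together with the splitting, so that, by the Remark following Lemma \ref{2Split} (in the splitting analogue from \cite{ABTHolMapFol}), the morphism $\sigma^*:\T_{M,S}\to\T_S$ has the explicit coordinate form sending $f^k\,\partial/\partial z^k$ to $\rho(f^r)\,\partial/\partial z^r + f^p\,\partial/\partial z^p$, where $\rho$ is the derivation attached to the splitting. Writing local generators of $\F|_S$ in these coordinates, $\sigma^*$ acts linearly on their coefficient matrix, and $\F^{\sigma}$ drops rank exactly where a certain matrix of holomorphic functions degenerates; this degeneracy locus is visibly an analytic subset $\Sigma$ of $S$. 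The content of the lemma is then that $\Sigma \neq S$, i.e. that $\sigma^*$ does not kill the whole foliation identically.

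The role of the tangency hypothesis is to guarantee nontriviality at $x_0$. In the case $l = \dim S$, the foliation $\F|_S$ spans a subspace of $\T_{M,S}$ of dimension $\dim S$ along $S$; tangency at $x_0$ means $(\F|_S)_{x_0}\subset\T_{S,x_0}$, so at $x_0$ the map $\sigma^*$ restricts to the identity on this subspace and hence preserves the rank there. In the case $l = 1$, a single generating vector field is tangent to $S$ at $x_0$, so its image under $\sigma^*$ is nonzero at $x_0$ (it equals the genuine tangential part), and a nonvanishing holomorphic section stays nonvanishing on an open neighborhood. In either case the rank condition holds at $x_0$, so $x_0\notin\Sigma$, whence $\Sigma\neq S$; since $S$ is irreducible, $\Sigma$ is a proper analytic subset and $\F^{\sigma}$ is a regular rank $l$ foliation on $S\setminus\Sigma$.

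The main obstacle I anticipate is not the existence of a degeneracy locus but verifying that $\F^{\sigma}$ is genuinely \emph{involutive} (hence a foliation, not merely a rank $l$ distribution) on the complement, and that it is saturated so that $\Sigma$ is the honest singular set. Involutivity should follow from the compatibility of $\sigma^*$ with brackets implied by Proposition \ref{Prop:Splitting}, but one must check that projecting via $\sigma^*$ does not destroy the bracket-closure that $\F$ enjoyed; this is where the splitting structure, rather than an arbitrary linear projection, is essential. The restriction to $l=1$ or $l=\dim S$ is exactly what lets one sidestep intermediate-rank complications in this involutivity check, so I would treat those two extreme cases separately and lean on the tangency at $x_0$ to anchor the rank, then extend by the open-ness of maximal rank.
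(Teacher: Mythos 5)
First, a point of comparison: this lemma is stated in the paper purely as a quotation of \cite{ABTHolMapFol}, Lemma 7.5; the paper contains no proof of it, so your proposal can only be judged against what the argument must be, not against an internal proof.

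The first half of your plan is sound. In an adapted atlas, $\F^{\sigma}=\sigma^*(\F|_S)$ is generated by the $\sigma^*$-images of local generators of $\F$, the locus where these images fail to be pointwise linearly independent is an analytic subset $\Sigma$, and since $\sigma^*\circ\iota=\id$ on $\T_S$, the tangency hypothesis at the regular point $x_0$ forces $\F^{\sigma}$ to have full rank $l$ at $x_0$; hence $x_0\notin\Sigma$ and $\Sigma$ is proper. This is exactly the right use of the hypothesis.

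The gap is the involutivity step, which you correctly identify as the crux but then resolve by appealing to a mechanism that does not exist. Proposition \ref{Prop:Splitting} produces $\sigma^*$ only as a morphism of sheaves of $\Ol_S$-modules; an $\Ol_S$-linear splitting of the normal sequence has no compatibility with Lie brackets, and the image under $\sigma^*$ of an involutive subsheaf of $\T_{M,S}$ is in general \emph{not} involutive. Indeed, if such bracket-compatibility held, the lemma would be true for every rank $1\leq l\leq \dim S$, and the hypothesis ``dimension equal to $1$ or to the dimension of $S$'' would be superfluous. The actual role of the two extreme ranks is that involutivity there is automatic, for elementary reasons your proposal never states: for $l=1$, any locally free rank-one subsheaf of $\T_S$ is involutive, since $[fv,gv]=\bigl(fv(g)-gv(f)\bigr)v$; for $l=\dim S$, at any point of $S\setminus\Sigma$ the $\dim S$ image fields form a local frame of $\T_S$, so $\F^{\sigma}$ coincides with $\T_S$ near that point and involutivity (and regularity of the quotient) is trivial. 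Substituting these two observations for your appeal to bracket-compatibility closes the argument; as written, the central step is missing.
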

\begin{Lemma}[\cite{ABTHolMapFol}, Lemma 7.6]
Let $S$ be a non-singular hypersurface splitting in a complex manifold $M$ , and
let $\F$ be a one dimensional holomorphic foliation on $M$. Assume that $S$ is not contained
in $\Sing(F)$. Then there is at most one splitting morphism $\sigma^*$ which is not $\F$-faithful outside
a suitable analytic subset of $S$.
\end{Lemma}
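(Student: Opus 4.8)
The plan is to work in an atlas adapted to the hypersurface and to reduce the whole question to a statement about a single line subbundle. Since $S$ has codimension $m=1$, I fix such an atlas with $S=\{z^1=0\}$ and tangential indices $p,q$ ranging in $2,\ldots,n$. By Proposition \ref{Prop:Splitting} a splitting morphism is a retraction $\sigma^*\colon\T_{M,S}\to\T_S$ with $\sigma^*\circ\iota=\id$; since $\iota$ is the identity on the tangential fields, such a $\sigma^*$ is completely determined by the image of the single normal generator, say
\[\sigma^*\Bigl(\frac{\partial}{\partial z^1}\Bigr)=\lambda^p\,\frac{\partial}{\partial z^p}, \qquad \lambda^p\in\Ol_S.\]
Equivalently, $\sigma^*$ is the projection onto $\T_S$ along the line subbundle $L:=\ker\sigma^*\subset\T_{M,S}$, a complement to $\T_S$ that maps isomorphically onto $\N_S$ under $p_2$. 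Thus splittings correspond bijectively to such complements $L$.

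First I would record the elementary but crucial fact that the difference of two splittings is \emph{global}: if $\sigma_1^*,\sigma_2^*$ are two retractions then $(\sigma_1^*-\sigma_2^*)\circ\iota=0$, so $\sigma_1^*-\sigma_2^*$ factors through $\N_S$ and defines a holomorphic section of $\Hom(\N_S,\T_S)=\N_S^*\otimes\T_S$, given in coordinates by $(\lambda_1^p-\lambda_2^p)\,\partial/\partial z^p$. Next I would translate the faithfulness condition. Taking a local generator $X$ of $\F$ and writing $X|_S=a\,\partial/\partial z^1+b^p\,\partial/\partial z^p$ with $a,b^p\in\Ol_S$, one computes
\[\sigma^*(X|_S)=\bigl(a\lambda^p+b^p\bigr)\frac{\partial}{\partial z^p}.\]
Because $\F$ has dimension $l=1$, the sheaf $\F^\sigma=\sigma^*(\F|_S)$ is generated by this single vector, so $\sigma^*$ fails to be $\F$-faithful outside every analytic subset precisely when $\sigma^*(X|_S)\equiv 0$, i.e. $a\lambda^p+b^p\equiv 0$ for all $p$; geometrically this says $\F|_S\subseteq L$ generically, i.e. $X|_S$ is a local section of $\ker\sigma^*$.

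Finally I would derive uniqueness. Suppose $\sigma^*$ is non-$\F$-faithful in the above sense. If $a\equiv 0$ the relations force $b^p\equiv 0$, whence $X|_S\equiv 0$ and $S\subseteq\Sing(\F)$, against the hypothesis; hence $a\not\equiv 0$, so $X|_S$ is transverse to $S$ on the dense open set $U:=\{a\neq 0\}$. On $U$ the line $\langle X|_S\rangle$ projects isomorphically onto $\N_S$, so the only rank-one complement $L$ of $\T_S$ containing $\F|_S$ is $L=\langle X|_S\rangle$ itself; equivalently $a\lambda^p=-b^p$ determines $\lambda^p=-b^p/a$ uniquely there. Consequently any two non-$\F$-faithful splittings agree on $U$, so their difference is a holomorphic section of $\N_S^*\otimes\T_S$ vanishing on the dense open $U$; by the identity principle on the (globally irreducible, hence connected) manifold $S$ this section is identically zero, giving $\sigma_1^*=\sigma_2^*$.

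The step I expect to require the most care is the honest verification that ``not $\F$-faithful outside a suitable analytic subset'' is equivalent to the sheaf-level vanishing $\sigma^*(X|_S)\equiv 0$ rather than vanishing on some proper subvariety, together with checking that the local data $X$ and $\lambda^p$ patch into genuinely global objects so that the coordinate computation really describes a global morphism and section; once this is in place, the transversality dichotomy and the concluding identity-principle argument are routine.
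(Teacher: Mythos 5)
The paper never proves this statement: it is imported verbatim from \cite{ABTHolMapFol} (Lemma 7.6) as a quoted result, so there is no internal proof to compare yours against. Judged on its own, your argument is correct and is the natural one, presumably close in spirit to the proof in the reference: you encode a splitting $\sigma^*$ by the functions $\lambda^p$ (equivalently by the complementary line $L=\ker\sigma^*$), observe that for a rank-one foliation the failure of $\F$-faithfulness outside every analytic subset is exactly the sheaf-level condition $\sigma^*(X|_S)\equiv 0$, i.e.\ $\F|_S\subseteq\ker\sigma^*$, and then use the hypothesis $S\not\subseteq\Sing(\F)$ to force $a\not\equiv 0$, which pins down $\lambda^p=-b^p/a$ on the dense open set $\{a\neq 0\}$ and hence, by the identity principle on the irreducible $S$, pins down $\sigma^*$ everywhere. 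Two points you flag deserve to be written out, though neither is a gap. First, the equivalence you rely on needs the remark that the local generators $\sigma^*(X_\alpha|_S)$ differ on overlaps by the unit transition functions of $\F$, so their common zero locus is a globally well-defined analytic subset of $S$; when it is proper, $\F^\sigma$ is off it a line subbundle of $\T_S$, automatically involutive, hence a rank-one regular foliation, which is what makes ``not faithful outside any analytic subset'' equivalent to identical vanishing. Second, your step ``$a\equiv 0$ on a chart forces $X|_S\equiv 0$, whence $S\subseteq\Sing(\F)$'' uses the same irreducibility/identity-principle argument you invoke only at the end: chart-level vanishing of $X|_S$ gives an open subset of $S$ inside the analytic set $\Sing(\F)\cap S$, and irreducibility of $S$ then yields $S\subseteq\Sing(\F)$; this must be run on every chart to conclude that the transversality locus is dense in all of $S$, which is what the final identity-principle step requires.
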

Indeed, speaking of first order $\F$-faithfulness we have a simple results which gives us some
insight.
\begin{Lemma}
Let $S$ be a submanifold $2$-splitting in a complex manifold $M$ , and
let $\F$ be a one dimensional holomorphic foliation on $M$. Let $\sigma_2^*:\T_{M,S(1)}\to\T_{S(1)}$ be the splitting
morphism. If $\sigma_2^*|_S$ is $\F$-faithful outside an analytic subset $\Sigma$, then $\sigma_2^*$ is
first order $\F$-faithful outside $\Sigma$.
\end{Lemma}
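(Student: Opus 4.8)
The plan is to check, on $S\setminus\Sigma$, the three conditions defining a regular foliation of $S(1)$ in Definition \ref{Def:FolInf} for the rank-one sheaf $\F^{\sigma_2}=\sigma_2^*(\F|_{S(1)})$, using the explicit shape of the splitting $\sigma_2^*$ produced in Lemma \ref{2Split} together with the fact that its restriction $\sigma_2^*|_S$ is the base splitting $\sigma^*$ of Proposition \ref{Prop:Splitting}. Throughout I work in a $2$-splitting atlas adapted to $S$, where $\sigma_2^*$ acts by $[f^k]_2\,\partial/\partial z^k\mapsto\tilde{\rho}([f^r]_2)\,\partial/\partial z^r+[f^p]_2\,\partial/\partial z^p$, with $\tilde{\rho}$ the $\theta_1$-derivation attached to the splitting.

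First I would fix a local generator $v=f^k\,\partial/\partial z^k$ of $\F$; then $\F^{\sigma_2}$ is generated over $\Ol_{S(1)}$ by the single section
\[
w:=\sigma_2^*([v]_2)=\tilde{\rho}([f^r]_2)\,\frac{\partial}{\partial z^r}+[f^p]_2\,\frac{\partial}{\partial z^p}\in\T_{S(1)}.
\]
Reducing modulo $\mathcal{I}_S$ kills the first summand (its coefficients lie in $\mathcal{I}_S/\mathcal{I}_S^2$) and leaves $\bar{w}=[f^p]_1\,\partial/\partial z^p$, which is exactly the local generator of $\F^{\sigma}=\sigma^*(\F|_S)$; hence $\F^{\sigma_2}|_S=\F^{\sigma}$. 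This settles the third condition of Definition \ref{Def:FolInf}: on $S\setminus\Sigma$ the restriction $\F^{\sigma_2}|_S$ is a rank-one foliation of $S$, since $\sigma^*=\sigma_2^*|_S$ is $\F$-faithful there by hypothesis. The involutivity condition is automatic for a rank-one sheaf, because $\{gw,hw\}\in\Ol_{S(1)}\,w$ by the Leibniz rule for the bracket of Lemma \ref{Lemma:Logaritmici} together with $\{w,w\}=0$.

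The heart of the matter is the freeness and subbundle (local splitting) condition. The key observation is that $\F$-faithfulness of $\sigma^*$ at a point $x\in S\setminus\Sigma$ means $\bar{w}(x)\neq 0$, so some tangential coefficient satisfies $f^{p_0}(x)\neq 0$, and therefore $[f^{p_0}]_2$ is a unit in $\Ol_{S(1),x}$. From this I would deduce at once that $g\mapsto gw$ is injective, so $\F^{\sigma_2}\cong\Ol_{S(1)}$ is locally free of rank one: if $gw=0$ its $\partial/\partial z^{p_0}$-component $g[f^{p_0}]_2$ vanishes, whence $g=0$. Moreover, solving for $\partial/\partial z^{p_0}$ from the unit coefficient expresses it through $w$ and the remaining local generators $\{[z^r]_2\,\partial/\partial z^s\}\cup\{\partial/\partial z^p:p\neq p_0\}$ of $\T_{S(1)}$; letting $Q$ be the submodule these generate, one checks $\T_{S(1)}=\Ol_{S(1)}\,w\oplus Q$, so $\F^{\sigma_2}$ is a local direct summand of exactly the type prescribed by the Frobenius normal form of Corollary \ref{Frobenius}. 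This provides the remaining conditions of Definition \ref{Def:FolInf}, valid on all of $S\setminus\Sigma$.

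The main obstacle is that $\T_{S(1)}$ is not locally free: along the normal directions it carries the torsion summand $(\mathcal{I}_S/\mathcal{I}_S^2)\,\partial/\partial z^s$, so a naive rank or linear-independence count cannot be transported from $S$ to $S(1)$. The device that overcomes this is precisely the lifting of information across the non-reduced structure, namely the passage from the non-vanishing of the tangential part of $\bar{w}$ on $S$ to the invertibility of $[f^{p_0}]_2$ over $\Ol_{S(1)}$; once this unit is available, the base-level splitting of $\F^{\sigma}$ extends, with no extra hypotheses, to a splitting over the first infinitesimal neighborhood, which is exactly what first order $\F$-faithfulness asserts.
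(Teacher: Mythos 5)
Your proof is correct and takes essentially the same route as the paper's own (much terser) proof: verify the three conditions of Definition \ref{Def:FolInf}, with the restriction condition holding by the $\F$-faithfulness hypothesis, involutivity automatic in rank one, and the freeness/direct-summand condition coming from the fact that at each $x\in S\setminus\Sigma$ the generator of $\F^{\sigma_2}$ has non-vanishing restriction to $S$, so its tangential coefficient is a unit in $\Ol_{S(1),x}$. Your explicit coordinate computation identifying $\F^{\sigma_2}|_S$ with $\F^{\sigma}$ and the decomposition $\T_{S(1)}=\Ol_{S(1)}\,w\oplus Q$ simply flesh out what the paper asserts in a single sentence.
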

\begin{proof}
We check that $\F^{\sigma_2}$ satisfies the requests of Definition \ref{Def:FolInf}. By hypothesis $\F^{\sigma_2}|_S$ is
a foliation of $S$. Since the rank of $\F^{\sigma_2}$ is $1$ it is an involutive subbundle of $\T_{S(1)}$; moreover,
for each point $x\in S\setminus \Sigma$ we can find a generator $v$ of $\F^{\sigma_2}_x$ such that $v|_S$ is non zero.
Therefore, $\T_{S(1),x}/\F^{\sigma_2}_x$ is $\Ol_{S(1)}$-free. 
\end{proof}
Directly from this last Lemma and Lemma \ref{Lemma:Faithfulness} we have that.
\begin{Corollary}
Let $S$ be a splitting submanifold of a complex manifold $M$ , and let $\F$ be a
holomorphic foliation on $M$ of dimension equal to $1$ or to the dimension of $S$. If there
exists $x_0\in  S \setminus \Sing(F)$ such that $\F$ is tangent to $S$ at $x_0$ , i.e., $(\F|_S)x_0\subset \T_{S,x_0}$,
then any $2$-splitting morphism is first order $\F$-faithful outside a suitable analytic subset of
$S$.
\end{Corollary}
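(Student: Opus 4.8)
The plan is to read the Corollary as a formal composition of Lemma \ref{Lemma:Faithfulness} with the Lemma immediately preceding it, so that essentially all the work lies in checking that the hypotheses of those two results are met. First I would dispose of the trivial case: if $S$ carries no $2$-splitting morphism there is nothing to prove, so I may assume one exists. This forces $S$ to be $2$-splitting and hence, as already recorded in the Remark after Lemma \ref{2Split}, splitting; thus Lemma \ref{Lemma:Faithfulness} becomes available.

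The only point that is not purely formal is the passage from a $2$-splitting morphism $\sigma_2^*$ to its restriction on $S$. Here I would invoke the explicit coordinate description of the splitting morphism given in the Remark following Lemma \ref{2Split}, namely $f^k\partial/\partial z^k\mapsto \tilde{\rho}(f^r)\partial/\partial z^r+f^p\partial/\partial z^p$. Reducing modulo $\mathcal{I}_S$ kills the term $\tilde{\rho}(f^r)$, since $\tilde{\rho}$ takes values in $\mathcal{I}_S/\mathcal{I}_S^2$, so that $\sigma_2^*$ restricts on $S$ to a genuine splitting morphism $\sigma^*=\sigma_2^*|_S\colon\T_{M,S}\to\T_S$ of the ordinary normal sequence, in the sense of Proposition \ref{Prop:Splitting}. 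With this identification, the tangency hypothesis $(\F|_S)_{x_0}\subset\T_{S,x_0}$ at a regular point $x_0\in S\setminus\Sing(\F)$, together with the assumption $\dim\F\in\{1,\dim S\}$, lets me apply Lemma \ref{Lemma:Faithfulness} to $\sigma^*$ and conclude that $\sigma_2^*|_S$ is $\F$-faithful outside a suitable analytic subset $\Sigma\subset S$. Feeding this into the Lemma immediately preceding the Corollary, whose hypothesis is exactly that $\sigma_2^*|_S$ is $\F$-faithful outside $\Sigma$, yields that $\sigma_2^*$ is first order $\F$-faithful outside $\Sigma$, which is the assertion.

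The step I expect to be the genuine obstacle is reconciling the dimension hypotheses. The preceding Lemma is stated only for a one-dimensional $\F$, whereas the Corollary also admits $\dim\F=\dim S$. For $\dim\F=1$ the chain above goes through verbatim, since involutivity of a rank-one sheaf is automatic and one needs only a generator of $\F^{\sigma_2}_x$ whose restriction to $S$ is nonzero. For $\dim\F=\dim S$ I would instead have to verify all three requirements of Definition \ref{Def:FolInf} for $\F^{\sigma_2}$ directly: that $\T_{S(1),x}/\F^{\sigma_2}_x$ stays $\Ol_{S(1)}$-free outside $\Sigma$, that $\F^{\sigma_2}|_S$ is a rank $l$ foliation of $S$, and above all that $[\F^{\sigma_2},\F^{\sigma_2}]\subseteq\F^{\sigma_2}$ for the bracket of Lemma \ref{Lemma:Logaritmici}, involutivity being no longer free once the rank exceeds one. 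I would attack this last, delicate point through the coordinate formula for $\sigma_2^*$ combined with the tangency at $x_0$, which should force the projected generators to close up under the bracket generically on $S$; once that freeness and involutivity are checked in the top-dimensional case, the two lemmas chain together and give the result.
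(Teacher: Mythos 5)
Your proposal is correct and is essentially the paper's own proof: the paper derives this Corollary in one line, ``directly from'' the Lemma immediately preceding it together with Lemma \ref{Lemma:Faithfulness}, which is exactly the chain you describe, with the paper leaving implicit both the vacuity remark and the check (which you carry out correctly via the coordinate formula for $\sigma_2^*$) that $\sigma_2^*|_S$ is a splitting morphism in the sense of Proposition \ref{Prop:Splitting}. The obstacle you flag in the case $\dim\F=\dim S$ is real but is a defect of the paper's statement rather than of your argument: the preceding Lemma is stated only for rank one, and indeed $\F$-faithfulness and its first order analogue are only defined (Definition \ref{DefFfaithful} and the definition following Lemma \ref{2Split}) for rank strictly smaller than $\dim S$, so the Corollary's effective content is the rank-one case, which your chain settles completely.
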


\section{Extension of foliations and embedding in the normal bundle}\label{SecExtension}
\begin{Definition}
Let $S$ be a codimension $m$ submanifold of a $n$-dimensional complex manifold $M$. Let $S(1)$ be its first
infinitesimal neighborhood and $S_{N}(1)$ be the first infinitesimal neighborhood of its embedding as the zero section
of its normal bundle in $M$. We denote by $\mathcal{O}_{N_S}$ the structure sheaf of the normal bundle of $S$ and by $
\mathcal{I}_{S,N_S}$ the ideal sheaf of $S$ in $N_S$.
We say $S_{N}(1)$ is \textbf{isomorphic} to $S(1)$ if there exists an isomorphism
$\phi:\mathcal{O}_{N_S}/\mathcal{I}^2_{S,N_S}\to\mathcal{O}_M/\mathcal{I}_S^2$ such that $\theta_1\circ\phi=\theta_1^N$,
where
$\theta_1:\mathcal{O}_M/\mathcal{I}_S^2\to\mathcal{O}_S$ and
$\theta_1^N:\mathcal{O}_{N_S}/\mathcal{I}^2_{S,N_S}\to\mathcal{O}_S$
are the canonical projections.
\end{Definition}
\begin{Proposition}[\cite{ABTHolMapFol}~Prop. 1.3]
Let $S$ be a submanifold of a complex manifold $M$. Then $S$ splits into $M$ if and only if its first infinitesimal
neighborhood $S(1)$ in $M$ is isomorphic to its first infinitesimal neighborhood $S_N(1)$ in $N_S$, where we are
identifying $S$ with the zero section of $N_S$.
\end{Proposition}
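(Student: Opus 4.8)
The plan is to read both conditions as statements about square--zero extensions of $\Ol_S$ and to exploit the one structural fact that makes the equivalence work: the first infinitesimal neighborhood of the zero section of a vector bundle is \emph{canonically} split. First I would replace the geometric notion of splitting by an algebraic one. By the last item in the list of Proposition \ref{Prop:Splitting}, $S$ splits into $M$ in the sense of Definition \ref{split} if and only if there is a morphism of sheaves of rings $\rho\colon\Ol_S\to\Ol_{S(1)}$ with $\theta_1\circ\rho=\id$; that is, a ring--theoretic section of $\theta_1$. So the whole statement reduces to proving that such a section exists if and only if there is an isomorphism $\phi$ as in the definition. The relevant observation is that $\Ol_{S(1)}=\Ol_M/\mathcal{I}_S^2$ and $\Ol_{N_S}/\mathcal{I}_{S,N_S}^2$ are both extensions of $\Ol_S$ whose kernel ideal has square zero, and that in both cases this kernel is canonically the conormal sheaf: $\mathcal{I}_S/\mathcal{I}_S^2\cong\N_S^*\cong\mathcal{I}_{S,N_S}/\mathcal{I}_{S,N_S}^2$, the last being the standard identification of the conormal sheaf of the zero section of $N_S$ with $\N_S^*$.

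Next I would record that $S_N(1)$ is always split. The bundle projection $\pi\colon N_S\to S$ induces a ring homomorphism $\pi^*\colon\Ol_S\to\Ol_{N_S}$; composing with the quotient to $\Ol_{N_S}/\mathcal{I}_{S,N_S}^2$ yields a morphism of sheaves of rings $s_N\colon\Ol_S\to\Ol_{N_S}/\mathcal{I}_{S,N_S}^2$ which is a section of $\theta_1^N$. Concretely, in local bundle coordinates with fiber coordinates $w^1,\dots,w^m$ one has $\Ol_{N_S}/\mathcal{I}_{S,N_S}^2=\Ol_S\oplus\N_S^*$ as sheaves of rings, the summand $\N_S^*$ being spanned by the classes $[w^r]$ and forming a square--zero ideal, while $s_N$ is simply $g\mapsto g\oplus 0$.

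For the forward implication, assume an isomorphism $\phi\colon\Ol_{N_S}/\mathcal{I}_{S,N_S}^2\to\Ol_M/\mathcal{I}_S^2$ with $\theta_1\circ\phi=\theta_1^N$. Then $\rho:=\phi\circ s_N$ is a composition of morphisms of sheaves of rings, hence one itself, and $\theta_1\circ\rho=\theta_1\circ\phi\circ s_N=\theta_1^N\circ s_N=\id$; so $\rho$ is a ring section of $\theta_1$ and Proposition \ref{Prop:Splitting} gives that $S$ splits. For the converse, starting from a ring section $\rho$ I would define
\[
\phi\colon\Ol_{N_S}/\mathcal{I}_{S,N_S}^2=\Ol_S\oplus\N_S^*\longrightarrow\Ol_{S(1)},\qquad \phi(g\oplus\xi)=\rho(g)+\iota(\xi),
\]
where $\iota\colon\N_S^*\cong\mathcal{I}_S/\mathcal{I}_S^2\hookrightarrow\Ol_{S(1)}$ is the canonical inclusion of the square--zero ideal. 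The content to verify is multiplicativity: because the $\Ol_{S(1)}$--module structure on $\mathcal{I}_S/\mathcal{I}_S^2$ factors through $\theta_1$ and the ideal squares to zero, the cross terms satisfy $\rho(g)\,\iota(\xi')=\iota(g\cdot\xi')$ while $\iota(\xi)\,\iota(\xi')=0$, which matches the product rule $(g\oplus\xi)(g'\oplus\xi')=gg'\oplus(g\cdot\xi'+g'\cdot\xi)$ of the trivial extension. One then checks $\theta_1\circ\phi=\theta_1^N$ directly, and since $\phi$ induces the identity on $\Ol_S$ and the isomorphism $\iota$ on the conormal kernel, the short five lemma shows $\phi$ is an isomorphism.

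The step I expect to be the main obstacle is the bookkeeping of the canonical identification $\mathcal{I}_{S,N_S}/\mathcal{I}_{S,N_S}^2\cong\mathcal{I}_S/\mathcal{I}_S^2$ together with the verification that $\phi$ respects the ring structure; everything else is formal once one knows that $S_N(1)$ is the trivial square--zero extension, which is the true engine of the proof. It is precisely the compatibility requirement $\theta_1\circ\phi=\theta_1^N$ in the definition of isomorphic infinitesimal neighborhoods that keeps the two constructions mutually inverse, so I would be careful to invoke that hypothesis in both directions rather than merely transporting the kernel ideals.
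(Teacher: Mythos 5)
Your proof is correct, but note that there is nothing in the paper to compare it against: the proposition is imported verbatim from \cite{ABTHolMapFol} (Prop.~1.3) and stated without proof, so the only meaningful comparison is with the method of that reference and with how this paper uses the statement. Your route is the structural, coordinate-free one: both $\Ol_{S(1)}$ and $\Ol_{N_S}/\mathcal{I}_{S,N_S}^2$ are square-zero extensions of $\Ol_S$ by the conormal sheaf (the identification $\mathcal{I}_{S,N_S}/\mathcal{I}_{S,N_S}^2\cong \N_S^*\cong\mathcal{I}_S/\mathcal{I}_S^2$ you invoke is exactly Remark~\ref{InfNeigh} of the paper), the extension coming from $N_S$ is canonically trivialized by $\pi^*$, and then the equivalence is the classification of split square-zero extensions: an isomorphism $\phi$ compatible with the projections transports the canonical ring section $s_N$ to a ring section $\rho$ of $\theta_1$, which is splitting by Proposition~\ref{Prop:Splitting}, while conversely a ring section $\rho$ exhibits $\Ol_{S(1)}$ as the same trivial extension, the five lemma making your $\phi$ an isomorphism. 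Your key verifications (multiplicativity of $\phi$, using that the $\Ol_{S(1)}$-module structure on $\mathcal{I}_S/\mathcal{I}_S^2$ factors through $\theta_1$ and that the ideal squares to zero) are the right ones and are complete. The cited source, consistently with the methods this paper uses everywhere else (see the Remark following Definition~\ref{split} and the proof of Lemma~\ref{LemmaFolEstNormImplFolEst}), argues instead in adapted atlases: splitting is characterized by the existence of charts with $\big[\partial z^p_{\beta}/\partial z^r_{\alpha}\big]_1=0$, and the isomorphism of first infinitesimal neighborhoods is assembled chart by chart by matching these transition functions against the fibrewise-linear ones of $N_S$ modulo $\mathcal{I}^2$. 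Your argument buys conceptual clarity and independence of coordinates, making explicit that the whole equivalence rests on the canonical splitting of $S_N(1)$; the atlas argument buys the explicit coordinate form of $\phi$, which is what this paper actually manipulates when it applies the proposition (as in Lemma~\ref{LemmaFolEstNormImplFolEst}, where coordinates on $M$ are produced as $\phi$-images of bundle coordinates on $N_S$).
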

\begin{Remark}\label{InfNeigh}
In general, given a vector bundle $E$ over a submanifold $S$, we have that $TE|_S$ is canonically isomorphic to
$TS\oplus E$. When $E$ is $N_S$ this implies that the projection on the second summand of
$TN_S|_S=TS\oplus N_S$ gives
rise to an isomorphism of $N_S$ and $N_{0_S}$, i.e., the normal bundle of $S$ as the zero section of $N_S$. Therefore we
have
an isomorphism between $\mathcal{I}_S/\mathcal{I}_S^2$ and $\mathcal{I}_{S,N_S}/\mathcal{I}_{S,N_S}^2.$
\end{Remark}

Let $S$ be a codimension $m$ submanifold of an $n$-dimensional complex manifold $M$ and let $\F$ be a foliation of
$S$.
Thanks to the Holomorphic Frobenius theorem, we know that there exists an atlas $\{(U_{\alpha};
z^1_{\alpha},\ldots,z^n_{\alpha})\}$
adapted to $S$ and $\F$. In such an atlas we know that $\F=\ker(dz_{\alpha}^{m+l+1}|_S,\ldots, dz_{\alpha}^n|_S)$.
An equivalent formulation of the Frobenius theorem states that a submodule of $\Omega^1(S)$ is integrable if and only if
each stalk is generated by exact forms.
We denote by $\pi:N_S\to S$ the normal bundle of $S$.
The map $\pi$ is holomorphic, therefore $\pi^*(dz_{\alpha}^{k}|_S)$ is a well defined local holomorphic $1$-form on
$\pi^{-1}(U_{\alpha})\subset N_S$. Moreover, since
$\{dz_{\alpha}^{m+l+1}|_S,\ldots, dz_{\alpha}^n|_S\}$ is an integrable system of $1$-forms, so is
$\{\pi^*(dz_{\alpha}^{m+l+1}|_S),\ldots, \pi^*(dz_{\alpha}^n|_S)\}$.
Then $\{\pi^*(dz_{\alpha}^{m+l+1}|_S),\ldots, \pi^*(dz_{\alpha}^n|_S)\}$ defines a foliation $\tilde{\F}$ of $N_S$,
whose leaves are the preimages of the leaves of $\mathcal{F}$ through $\pi$. Since $S$ is regular $TM$ is trivialized
on each coordinate neighborhoods and so is $N_S$.
In the following we use the atlas $\{(\pi^{-1}(U_{\alpha}),v_{\alpha}^1,\ldots,
v^m_{\alpha},z^{m+1}_{\alpha},\ldots,z^{n}_{\alpha}\}$ of $N_S$ given by the trivializations of the normal bundle, where
$v^r_{\alpha}$ are the coordinates in the fiber; then $\tilde{\F}$ is generated on $\pi^{-1}(U_{\alpha})$ by 
\[\frac{\partial}{\partial v^1_{\alpha}},\ldots,\frac{\partial}{\partial v^m_{\alpha}},\frac{\partial}{\partial
z^{m+1}_{\alpha}}\bigg|_S,\ldots,\frac{\partial}{\partial z^{m+l}_{\alpha}}\bigg|_S.\]
The fibers of $\pi$ are the leaves of a holomorphic foliation of $N_S$, called the \textbf{vertical foliation}, which we
denote
by $\mathcal{V}$.
On $\pi^{-1}(U_{\alpha})$ it is generated by 
\[\frac{\partial}{\partial v^1_{\alpha}},\ldots,\frac{\partial}{\partial v^m_{\alpha}}.\]
We study now the splitting of the following sequence, when restricted to the first infinitesimal neighborhood of the
embedding of $S$ as the zero section of $N_S$:
\begin{equation}\label{folest}
\xymatrix{0 \ar[r] &\mathcal{V} \ar[r]^{\iota} &\tilde{\F} \ar[r]^{\pr} &\tilde{\F}/\mathcal{V} \ar[r] & 0}.
\end{equation}
A result of Grothendieck \cite{Grothendieck} tells us that the splitting of the sequence is equivalent
to the vanishing of a cohomology class in $H^1(M,\Hom(\tilde{\F}/\mathcal{V},\mathcal{V}))$.
The splitting of this sequence is equivalent to the fact that there exist an isomorphism
$\tilde{\F}\simeq\mathcal{V}\oplus\tilde{\F}/\mathcal{V}$ compatible with the projection $\pr$ and the map $\iota$.
Even if this result was already used implicitly in Section \ref{SecAtiyah} we sketch a proof to show how it can be
used
operatively in our work.
Indeed, let $\omega$ be the cohomology class associated to the splitting of a short exact sequence of sheaves 
\begin{equation}\label{SecEsCort}
0\to \E\to\F\to\G\to 0;
\end{equation}
this obstruction is the image of the identity homomorphism in $H^0(M,\Hom(\G,\G))$ into $H^1(M,\Hom(\G,\E))$.
By the Long Exact Sequence Theorem for \u{C}ech cohomology we compute $\omega$ in the following way: let
$\{U_{\alpha},\textrm{Id}\}$ be the class representing the identity in 
$H^0(M,\Hom(\G,\G))$, we take a lift $(U_{\alpha},\tau_{\alpha})$ in $C^0(\mathcal{U},\Hom(\G,\F))$ and take its
\u{C}ech coboundary, $\{U_{\alpha\beta}, \tau_{\beta}-\tau_{\alpha}\}$. Clearly,
$\pr\circ\tau_{\beta}-\pr\circ\tau_{\alpha}=0$, so this is a well defined element of $C^1(\mathcal{U},\Hom(\G,\E))$. By
diagram chasing, it is shown this is a \u{C}ech cocycle which represents $\omega$.
Suppose now $\omega$ is $0$ in cohomology: this means there exists a cochain
$\{U_{\alpha},\sigma_{\alpha}\}$ in
$C^0(\mathcal{U},\Hom(\G,\E))$ whose coboundary is $\omega$, i.e.
$\sigma_{\beta}-\sigma_{\alpha}=\tau_{\beta}-\tau_{\alpha}$.
We define now a \u{C}ech cochain in $C^0(\mathcal{U},\Hom(\E\oplus\G,\F))$ as $\{U_{\alpha},\theta_{\alpha}\}$ where
$\theta_{\alpha}$ is defined on each $U_{\alpha}$ as:
\[\theta_{\alpha}:(v,w)\mapsto (\iota(v-\sigma_{\alpha}(w))+\tau_{\alpha}(w)).\]
We compute now $\delta\{U_{\alpha},\theta_{\alpha}\}$; on each $U_{\alpha\beta}$:
\begin{align*}
&\iota(v-\sigma_{\beta}(w))+\tau_{\beta}(w)-\iota(v-\sigma_{\alpha}(w))+\tau_{\alpha}(w)\\&=\iota(\sigma_{
\alpha }(w)-\sigma_{\beta}(w))+\tau_{\beta}(w)-\tau_{\alpha}(w)=0.
\end{align*}
So, we have a global isomorphism of sheaves between $\E\oplus\G$ and $\F$ satisfying our requests.
\begin{Remark}
Please note that $\tilde{\F}/\mathcal{V}$, when restricted to $S$ is nothing else but the foliation $\F$.
This follows directly from our construction of $\tilde{\F}$ as the pull-back foliation defined by the
integrable system $\{\pi^*(dz_{\alpha}^{m+l+1}|_S),\ldots, \pi^*(dz_{\alpha}^n|_S)\}$.
\end{Remark}
\begin{Lemma}\label{LemmaFolEstNormImplFolEst}
Let $S$ be a splitting submanifold in $M$. If there exists a foliation of the first infinitesimal neighborhood of the
embedding of $S$ as the zero section of its normal bundle, then there exists a foliation of the first infinitesimal
neighborhood of $S$ embedded in $M$.
\end{Lemma}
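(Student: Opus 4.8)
The plan is to turn the splitting hypothesis into an isomorphism of ringed spaces $S_N(1)\cong S(1)$ and then transport the given foliation across it, observing that every clause of Definition \ref{Def:FolInf} is intrinsic to the structure sheaf and is therefore preserved.

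First I would recall, from the Proposition of \cite{ABTHolMapFol} quoted above, that since $S$ splits there is an isomorphism of sheaves of $\mathbb{C}$-algebras $\phi\colon\mathcal{O}_{N_S}/\mathcal{I}_{S,N_S}^2\to\mathcal{O}_M/\mathcal{I}_S^2$ with $\theta_1\circ\phi=\theta_1^N$; in the notation of this section this is an isomorphism $\phi\colon\mathcal{O}_{S_N(1)}\to\mathcal{O}_{S(1)}$ of the two structure sheaves, compatible with the projections onto $\mathcal{O}_S$. The crucial point, which I would establish next, is that $\T_{S(1)}$ coincides with the sheaf of $\mathbb{C}$-derivations of $\mathcal{O}_{S(1)}$. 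By Remark \ref{StruttLog} its local sections in an adapted chart are exactly the $v=[a^r]_2\partial/\partial z^r+[a^p]_2\partial/\partial z^p$ with $a^r\in\mathcal{I}_S$; conversely, since the only relations in $\mathcal{O}_{S(1)}$ are $[z^rz^s]_2=0$, the Leibniz rule forces any derivation to kill $[z^rz^s]_2$, and a direct computation shows that this is precisely the constraint $a^r\in\mathcal{I}_S$. The same description holds verbatim for $\T_{S_N(1)}$.

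With this in hand, conjugation $D\mapsto\phi\circ D\circ\phi^{-1}$ gives a sheaf isomorphism $\phi_*\colon\T_{S_N(1)}\to\T_{S(1)}$ which is $\phi$-semilinear (that is, $\phi_*(g\cdot D)=\phi(g)\cdot\phi_*(D)$) and preserves the Lie bracket, because by Lemma \ref{Lemma:Logaritmici} that bracket is simply the commutator of derivations. Since $\theta_1\circ\phi=\theta_1^N$, the map $\phi_*$ is moreover compatible with the restriction morphisms onto $\T_S$. Let $\G\subseteq\T_{S_N(1)}$ be the given foliation of $S_N(1)$ and set $\F':=\phi_*(\G)\subseteq\T_{S(1)}$. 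Then $\F'$ is coherent of rank $l$ because $\phi_*$ is an isomorphism; the quotient $\T_{S(1)}/\F'_x$ is $\mathcal{O}_{S(1),x}$-free because $\phi_*$ carries the free quotient $\T_{S_N(1)}/\G_x$ onto it $\phi$-semilinearly; $\F'$ is involutive because $\phi_*$ preserves brackets; and $\F'|_S=\G|_S$ is a rank $l$ foliation of $S$ because $\phi_*$ intertwines the restrictions to $\T_S$. Hence $\F'$ satisfies Definition \ref{Def:FolInf} and is a foliation of the first infinitesimal neighborhood of $S$ in $M$.

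The one genuinely delicate step is the identification of $\T_{S(1)}$ with the intrinsic derivation sheaf of $(S,\mathcal{O}_{S(1)})$, together with the matching of the bracket of Lemma \ref{Lemma:Logaritmici} with the commutator of derivations; once this is settled the whole transfer is formal, every condition of Definition \ref{Def:FolInf} having been phrased entirely in terms of the derivation-sheaf structure that $\phi$ transports isomorphically. A more computational alternative, closer to the style of this paper, would instead feed the foliation of $S_N(1)$ through Corollary \ref{Frobenius} to produce an atlas of $N_S$ satisfying \eqref{secondorder}, pull these charts back to $M$ via the splitting atlas, and check that \eqref{secondorder} survives modulo $\mathcal{I}_S^2$ (Definition \ref{CondStar}); but the intrinsic argument makes the role of the splitting hypothesis transparent and avoids the coordinate bookkeeping.
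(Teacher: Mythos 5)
Your proposal is correct, and it reaches the conclusion by a genuinely different route than the paper, although both arguments pivot on the same key input: the isomorphism $\phi\colon\mathcal{O}_{N_S}/\mathcal{I}^2_{S,N_S}\to\mathcal{O}_M/\mathcal{I}_S^2$ with $\theta_1\circ\phi=\theta_1^N$ supplied by the splitting hypothesis via Proposition 1.3 of \cite{ABTHolMapFol}. The paper's own proof is exactly the ``computational alternative'' you sketch in your last sentences: it applies Corollary \ref{Frobenius} to the foliation of $S_N(1)$ to obtain an atlas $\{V_\alpha,u^1_\alpha,\dots,u^m_\alpha,z^{m+1}_\alpha,\dots,z^n_\alpha\}$ of $N_S$ in which the extension conditions hold, pushes the coordinate classes $[u^r_\alpha]_2$, $[z^p_\alpha]_2$ through $\phi$, chooses holomorphic representatives to build charts $(U_\alpha,\tilde z^1_\alpha,\dots,\tilde z^n_\alpha)$ of $M$ adapted to $S$ and $\F$, and then verifies $[\partial\tilde z^r_\beta/\partial\tilde z^i_\alpha]_2=[\partial u^r_\beta/\partial z^i_\alpha]_2=[0]_2$ (and similarly for the indices $t'$), i.e.\ Definition \ref{CondStar}. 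Your main argument instead transports the foliation itself, by identifying $\T_{S_N(1)}$ and $\T_{S(1)}$ with the sheaves of $\mathbb{C}$-derivations of the two structure sheaves and conjugating by $\phi$; all clauses of Definition \ref{Def:FolInf} are then preserved formally. What your route buys is conceptual clarity --- the lemma is visibly nothing more than invariance of Definition \ref{Def:FolInf} under isomorphism of ringed spaces --- and it bypasses Corollary \ref{Frobenius} entirely. What it costs is precisely the step you flag as delicate: the identification $\T_{S(1)}\cong\mathrm{Der}_{\mathbb{C}}(\Ol_{S(1)})$ is nowhere proved in the paper (Lemma \ref{Lemma:Logaritmici} gives only the inclusion of $\T_{S(1)}$ into the derivations), and your justification of the converse is loose: $\Ol_{S(1)}$ is an analytic, not a polynomial, algebra, so ``the only relations are $[z^rz^s]_2=0$'' must be replaced by the standard fact that a derivation of an analytic algebra is determined by its values on a coordinate system, after which the Leibniz constraint $[z^ra^r]_2=0$ does force the normal coefficients $a^r$ into $\mathcal{I}_S$. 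With that standard fact cited, your proof is complete; the paper's coordinate proof avoids needing it, at the price of the chart bookkeeping you wished to skip.
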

\begin{proof}
If there exists a foliation of the first infinitesimal neighborhood of the embedding of $S$ as the zero section of its
normal bundle we can find an atlas of $N_S$ given by $\{V_{\alpha},
u^{1}_{\alpha},\ldots,u^{m}_{\alpha},z^{m+1}_{\alpha},\ldots,z^n_{\alpha}\}$  such that, if
$V_{\alpha}\cap
V_{\beta}\cap S\neq\emptyset$ we have that
\[\bigg[\frac{\partial u^r_{\alpha}}{\partial z^i_{\beta}}\bigg]_2=[0]_2;\quad \frac{\partial z^{t'}_{\alpha}}{\partial
z^i_{\beta}}=0\]
where $r=1,\ldots,m$ and $t'=m+l+1,\ldots,n$.\\
We use the isomorphism $\phi:\mathcal{O}_{N_S}/\mathcal{I}^2_{S,N_S}\to\mathcal{O}_M/\mathcal{I}_S^2$, taking the images
\[
[\tilde{z}^1_{\alpha}]_2=\phi([u^1_{\alpha}]_2),\ldots,[\tilde{z}^r_{\alpha}]_2=\phi(u^r_{\alpha}),[\tilde{z}^{m+1}_{
\alpha}]_2=\phi([z^{m+1}_{\alpha}]_2),\ldots,[\tilde{z}^{n}_{\alpha}]_2=\phi(z^n_{\alpha});\] 
there exists open sets $U_{\alpha}\supset\pi(V_{\alpha})$ (modulo shrinking) where we can choose
representatives of these classes
such that
$(U_{\alpha},\tilde{z}^1_{\alpha},\ldots,\tilde{z}^n_{\alpha}),$
is a coordinate system adapted to $S$ and $\F$.
If $U_{\alpha}\cap U_{\beta}\cap S\neq\emptyset$ we can check that, since $\partial/\partial
\tilde{z}_{\beta}^{m+1},\ldots,\partial/\partial \tilde{z}^{m+l}_{\alpha}$ are logarithmic
\[\frac{\partial[\tilde{z}^r_{\beta}]_2}{
\partial\tilde{z}^i_{\alpha}}=\bigg[\frac{\partial\tilde{z}^r_{\beta}}{\partial\tilde{z}^i_{\alpha}}\bigg]_2=\bigg[\frac
{\partial u^r_{\beta}}{\partial z^i_{\alpha}}\bigg]_2=[0] _2,\]
for $r=1,\ldots,m$ and $i=m+1,\ldots,m+l+1$.
Following the same line of thought
\[\frac{\partial[\tilde{z}^{t'}_{\beta}]_2}{
\partial\tilde{z}^i_{\alpha}}=\bigg[\frac{\partial\tilde{z}^{t'}_{\beta}}{\partial\tilde{z}^i_{
\alpha } } \bigg]_2=\bigg[\frac
{\partial z^{t'}_{\beta}}{\partial z^i_{\alpha}}\bigg]_2=[0] _2,\]
for $t'=m+l+1,\ldots,n$ and $i=m+1,\ldots,m+l+1$.
\end{proof}
So, the problem of extending a foliation outside a submanifold boils down in the splitting case to
understand when \eqref{folest} splits and the image through the splitting of
$\F/\mathcal{V}$ is involutive.
We start by finding a sufficient condition for this to happen.
\begin{Proposition}
Let $M$ be a complex manifold of dimension $n$, and $S$ a splitting codimension $m$ submanifold. Let $\F$ be
a foliation of $S$ and $\pi: N_S\to M$ the normal bundle of $S$ in $M$. Let $\tilde{\F}=\pi^*(\F)$ and
$\mathcal{V}$ the vertical foliation given by $\ker d\pi$.
The sequence:
\begin{equation*}
\xymatrix{0 \ar[r] &\mathcal{V} \ar[r]^{\iota} &\tilde{\F} \ar[r]^{\pr} &\tilde{\F}/\mathcal{V} \ar[r] & 0}
\end{equation*}
splits if there exists an atlas adapted to $\F$ and $S$ such that
\[\frac{\partial^2 z^r_{\alpha}}{\partial z^i_{\beta}\partial z^s_{\beta}}\in \mathcal{I}_S,\]
for all $r,s=1,\ldots,m$ and $i=m+1,\ldots,m+l$.
\end{Proposition}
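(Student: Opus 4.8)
The plan is to identify the Grothendieck obstruction to splitting \eqref{folest} over the first infinitesimal neighborhood $S_N(1)$ with an explicit \v{C}ech $1$-cocycle, and then to read off from the hypothesis that this cocycle vanishes. Concretely, I would apply the recipe sketched above for a general sequence \eqref{SecEsCort}, with $\E=\mathcal{V}$ and $\G=\tilde{\F}/\mathcal{V}$, so that the obstruction lives in $H^1(S,\Hom(\tilde{\F}/\mathcal{V},\mathcal{V}))$. On $\pi^{-1}(U_\alpha)$ the quotient $\tilde{\F}/\mathcal{V}$ is freely generated by the classes of the horizontal fields $\partial/\partial z^j_\alpha$, $j=m+1,\ldots,m+l$, so I would define the local splitting $\sigma_\alpha$ by $\sigma_\alpha([\partial/\partial z^j_\alpha])=\partial/\partial z^j_\alpha$ and extend it by $\Ol_{S(1)}$-linearity; each such field is tangent to $\tilde{\F}$ and projects onto its own class, so $\pr\circ\sigma_\alpha=\id$.

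The heart of the argument is then the \v{C}ech coboundary $\sigma_\beta-\sigma_\alpha$ on an overlap, computed intrinsically on $N_S$ in the charts $(v^1_\alpha,\ldots,v^m_\alpha,z^{m+1}_\alpha,\ldots,z^n_\alpha)$. There the base coordinates change by the transition functions of $S$, while the fibre coordinates change linearly, $v^r_\alpha=(g_{\alpha\beta})^r_s\,v^s_\beta$ with $(g_{\alpha\beta})^r_s=\partial z^r_\alpha/\partial z^s_\beta|_S$; here I use that $S$ splits, so that $S_N(1)\cong S(1)$ and the normal bundle cocycle is exactly the normal block of the $M$-Jacobian restricted to $S$ (Proposition 1.3 of \cite{ABTHolMapFol} together with Remark \ref{InfNeigh}). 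Adaptation to $\F$ and $S$ then kills two families of terms: the base derivatives $\partial z^u_\alpha/\partial z^i_\beta$ along directions transverse to $\F$ vanish identically on $N_S$ (they are base functions vanishing on all of $S$, which is the whole base), while the $\F$-block $\partial z^j_\alpha/\partial z^i_\beta$ is a pure base function that appears identically in both splittings and cancels in the difference. Expanding $\partial/\partial z^i_\beta$ in the $\alpha$-chart, the only surviving contribution is vertical, and I expect to obtain
\[
(\sigma_\beta-\sigma_\alpha)\bigg(\bigg[\frac{\partial}{\partial z^i_\beta}\bigg]\bigg)
=\frac{\partial^2 z^r_\alpha}{\partial z^i_\beta\,\partial z^s_\beta}\bigg|_S\,v^s_\beta\,\frac{\partial}{\partial v^r_\alpha},
\]
the fibre derivative appearing because $\partial/\partial z^r_\alpha$ corresponds to $\partial/\partial v^r_\alpha$ under the splitting identification and $\partial(g_{\alpha\beta})^r_s/\partial z^i_\beta=\partial^2 z^r_\alpha/\partial z^i_\beta\partial z^s_\beta|_S$.

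With this cocycle in hand the conclusion is immediate: the hypothesis $\partial^2 z^r_\alpha/\partial z^i_\beta\partial z^s_\beta\in\mathcal{I}_S$ says precisely that these coefficients vanish on $S$, so every entry of the representing cocycle is identically zero and the obstruction class in $H^1(S,\Hom(\tilde{\F}/\mathcal{V},\mathcal{V}))$ vanishes. By the Grothendieck criterion recalled above, \eqref{folest} then splits.

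I expect the main obstacle to be not the final calculation, which is short once the setup is fixed, but the careful justification of the identifications underlying it: that the transition cocycle of $N_S$ is the restricted normal block of the $M$-Jacobian (this is where the splitting of $S$ enters), that the transverse-to-$\F$ base derivatives vanish identically on $N_S$ rather than merely on $S$ (using that $S$ is the whole base), and that multiplication by the first-order fibre coordinate $v^s$ truncates the coefficient $\partial^2 z^r_\alpha/\partial z^i_\beta\partial z^s_\beta$ to its value on $S$. It is exactly this last truncation that upgrades the hypothesis ``belongs to $\mathcal{I}_S$'' into the honest vanishing of the cocycle.
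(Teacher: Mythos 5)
Your proposal is correct and follows essentially the same route as the paper: both compute the Grothendieck obstruction as the \v{C}ech coboundary of the coordinate lifts $[\partial/\partial z^j_\alpha]\mapsto\partial/\partial z^j_\alpha$, use the linearity of the fibre transition functions $v^r_\alpha=(\partial z^r_\alpha/\partial z^s_\beta|_S)\,v^s_\beta$ to rewrite the cocycle as the mixed second derivatives multiplied by a fibre coordinate, and conclude that the hypothesis kills the coefficient on $S$ so the representing cocycle vanishes on $S_N(1)$. Your final truncation step, reading the factor $v^s_\beta\in\mathcal{I}_{S,N_S}$ as reducing the coefficient modulo $\mathcal{I}_S$, is exactly how the paper closes its argument via the isomorphism $\mathcal{I}_S/\mathcal{I}_S^2\simeq\mathcal{I}_{S,N_S}/\mathcal{I}_{S,N_S}^2$.
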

\begin{proof}
We compute the obstruction to the splitting of the sequence, following \cite{Atiyah} and \cite{Grothendieck}: we apply
the functor
$\Hom(\tilde{\F}/\V,\cdot)$ to sequence \eqref{folest} and compute the image of the identity through the coboundary map
\[\delta: H^0(S,\Hom(\tilde{\F}/\V,\tilde{\F}/\mathcal{V})) \to H^1(S,\Hom(\tilde{\F}/\V,\mathcal{V})).\]
We fix an atlas $\{U_{\alpha},z_{\alpha}\}$ adapted to $S$ and $\F$ and we denote the quotient frame for
$\tilde{\F}/\V$ by $\{\partial_{m+1,\alpha},\ldots,\partial_{m+l,\alpha}\}$  (i.e.,
$\partial_{m+1,\alpha}$ is the equivalence class of $\partial/\partial z^{m+1}_{\alpha}|_S$) and by
$\{\omega^{m+1}_{\alpha},\ldots,\omega^{m+l}_{\alpha}\}$ its dual frame. The cocycle representing the identity in
$H^0(S,\Hom(\tilde{\F}/\V,\tilde{\F}/\mathcal{V}))$ is then represented as
$\{U_{\alpha},\omega^j_{\alpha}\otimes\partial_{j,\alpha}\}$; the obstruction to the splitting of the sequence is then
\begin{align}
\delta\{\omega^j_{\alpha}\otimes\partial_{j,\alpha}\}&=\omega^j_{\beta}\otimes\frac{\partial}{\partial
z^j_{\alpha}}-\omega^j_{\alpha}\otimes\frac{\partial}{\partial z^j_{\alpha}}\nonumber\\
&=\omega^j_{\beta}\otimes\frac{\partial}{\partial z^j_{\alpha}}-\bigg[\frac{\partial z^j_{\alpha}}{\partial
z^i_{\beta}}\frac{\partial z^{j'}_{\beta}}{\partial z^j_{\alpha}}\bigg]_2
\omega^i_{\beta}\otimes\frac{\partial}{\partial z^{j'}_{\beta}}-\bigg[\frac{\partial z^j_{\alpha}}{\partial
z^i_{\beta}}\frac{\partial v^{r}_{\beta}}{\partial z^j_{\alpha}}\bigg]_2\omega^i_{\beta}\otimes\frac{\partial}{\partial
v^{r}_{\beta}}\nonumber\\
&=-\bigg[\frac{\partial z^j_{\alpha}}{\partial z^i_{\beta}}\frac{\partial v^{r}_{\beta}}{\partial
z^j_{\alpha}}\bigg]_2\omega^i_{\beta}\otimes\frac{\partial}{\partial v^{r}_{\beta}}.\label{classFol1}
\end{align}
The vanishing of \eqref{classFol1} is a sufficient condition for the splitting of the sequence; this class vanishes if
$\partial v^{r}_{\beta}/\partial z^j_{\alpha}$ belong to $\mathcal{I}^2_{N_S}.$
Moreover, the coordinate changes map of $N_S$ have a peculiar structure:
\[v^r_{\beta}=v^s_{\alpha}\frac{\partial z^r_{\alpha}}{\partial z^s_{\beta}}.\]
Therefore:
\begin{align}
-\bigg[\frac{\partial z^j_{\alpha}}{\partial z^i_{\beta}}\frac{\partial v^{r}_{\beta}}{\partial
z^j_{\alpha}}\bigg]_2&=-\bigg[\frac{\partial z^j_{\alpha}}{\partial z^i_{\beta}}\frac{\partial}{\partial
z^j_{\alpha}}\bigg(\frac{\partial z^r_{\alpha}}{\partial z^s_{\beta}}\bigg)\bigg]_2\nonumber\\
&=-\bigg[v^s_{\alpha}\frac{\partial
z^j_{\alpha}}{\partial z^i_{\beta}}\frac{\partial^2 z^r_{\alpha}}{\partial z^j_{\alpha}\partial
z^s_{\beta}}\bigg]_2=-\bigg[v^s_{\alpha}\frac{\partial
z^j_{\alpha}}{\partial z^i_{\beta}}\frac{\partial^2 z^r_{\alpha}}{\partial z^s_{\alpha}\partial
z^j_{\beta}}\bigg]_2\nonumber;
\end{align}
using the isomorphism between $\mathcal{I}_S/\mathcal{I}_S^2$ and $\mathcal{I}_{S,N_S}/\mathcal{I}_{S,N_S}^2$ we see
that the last expression vanishes if $\partial z^s_{\alpha}/\partial z^j_{\beta}\in\mathcal{I}_S^2.$ 
\end{proof}
\begin{Remark}
Since we are working in an atlas of $N_S$ adapted to $S$ and $\F$ we have that
\[\frac{\partial z^p_{\alpha}}{\partial v^r_{\beta}}\equiv 0\quad,\quad \frac{\partial
z^t_\alpha}{\partial
z^i_{\beta}}\equiv 0,\]
for $p=m+1,\ldots,n$, $r=1,\ldots,m$, $t=m+l+1,\ldots,n$, $i=m+1,\ldots,n$ (please remark that we
are \textit{not} following our usual convention).
Looking at the transition functions of the tangent bundle of $N_S$, in an atlas adapted to $S$ and
$\F$, on the first infinitesimal neighborhood of the embedding of $S$ as the zero section of $N_S$,
the following equality holds:
\[ [\delta^i_j]_2=\bigg[\frac{\partial v^r_{\alpha}}{\partial
z^j_{\beta}}\frac{\partial
z^i_{\beta}}{\partial
v^r_{\alpha}}+\frac{\partial z^p_{\alpha}}{\partial z^j_{\beta}}\frac{\partial z^i_{\beta}}{\partial
z^p_{\alpha}}\bigg]_2,\]
now, since ${\partial z^p_{\alpha}}/{\partial v^r_{\beta}}\equiv 0$ we have that
\[[\delta^i_j]_2=\bigg[\frac{\partial z^p_{\alpha}}{\partial z^j_{\beta}}\frac{\partial
z^i_{\beta}}{\partial
z^p_{\alpha}}\bigg]_2\]
and since ${\partial z^t_\alpha}/{\partial z^i_{\beta}}\equiv 0$ for $t=m+l+1,\ldots,n$,
$i=m+1,\ldots,n$ we have that
\[[\delta^i_j]_2=\bigg[\frac{\partial
z^{i'}_{\alpha}}{\partial z^j_{\beta}}\frac{\partial z^i_{\beta}}{\partial
z^{i'}_{\alpha}}\bigg]_2,\]
where $i,j=m+1,\ldots,m+l$ and we sum over $i'=m+1,\ldots,m+l$. 
\end{Remark}
\begin{Lemma}\label{LemmaExtNonInv}
 Let $S$ be a splitting submanifold of $M$, let $\F$ be foliation of $S$; if the sequence 
\begin{equation*}
\xymatrix{0 \ar[r] &\mathcal{V} \ar[r]^{\iota} &\tilde{\F} \ar[r]^{\pr} &\tilde{\F}/\mathcal{V} \ar[r] & 0}.
\end{equation*}
splits on the first infinitesimal neighborhood of $S$ embedded as the zero section of its normal bundle in $M$, then
$\F$ extends as a subsheaf of $\T_{S(1)}$.
\end{Lemma}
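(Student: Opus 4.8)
The plan is to transport a complement furnished by the hypothesis from the normal bundle back to $M$, using that $S$ splits. Since $S$ splits, the proposition recalled above (\cite[Prop.~1.3]{ABTHolMapFol}) yields a ring isomorphism $\phi\colon\Ol_{N_S}/\mathcal{I}^2_{S,N_S}\to\Ol_{S(1)}$ compatible with the projections onto $\Ol_S$; as the tangent sheaf of a first infinitesimal neighborhood is intrinsic to the corresponding ringed space (it is the image of the logarithmic sheaf, and by Lemma~\ref{Lemma:Logaritmici} its sections are the derivations of the structure sheaf), $\phi$ induces an isomorphism $\T_{S_N(1)}\cong\T_{S(1)}$ compatible with the projections onto $\T_S$. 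Hence it suffices to produce a coherent rank~$l$ subsheaf $\hat{\F}\subseteq\T_{S_N(1)}$ with $\hat{\F}|_S=\F$ and to define the desired extension as its image under $\phi$; recall here that $(\tilde{\F}/\V)|_S=\F$.

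First I would extract the complement. Applying the explicit \u{C}ech splitting recipe recalled before Lemma~\ref{LemmaFolEstNormImplFolEst} (cf.\ \cite{Grothendieck}) to \eqref{folest} over the first infinitesimal neighborhood of $S$ in $N_S$, the vanishing of the obstruction gives a global section $\mu\colon(\tilde{\F}/\V)|_{S_N(1)}\to\tilde{\F}|_{S_N(1)}$ with $\pr\circ\mu=\id$. In an atlas of $N_S$ adapted to $S$ and $\F$, with fibre coordinates $v^r_\alpha$, write $\mu(\partial_{i,\alpha})=\partial/\partial z^i_\alpha+g^r_{i,\alpha}\,\partial/\partial v^r_\alpha$ with $g^r_{i,\alpha}\in\Ol_{N_S}/\mathcal{I}^2_{S,N_S}$. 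Then $\hat{\F}:=\mu\big((\tilde{\F}/\V)|_{S_N(1)}\big)$ is a rank~$l$ locally free subsheaf of $\tilde{\F}|_{S_N(1)}\subseteq\T_{N_S}\otimes\Ol_{N_S}/\mathcal{I}^2_{S,N_S}$, complementary to $\V$ and projecting isomorphically onto $\tilde{\F}/\V$.

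The hard part is to show that $\hat{\F}$ actually lies in $\T_{S_N(1)}$, i.e.\ that the generators $\mu(\partial_{i,\alpha})$ are logarithmic along $S$. Since $\partial/\partial z^i_\alpha$ is logarithmic but $\partial/\partial v^r_\alpha$ is not, this amounts to $g^r_{i,\alpha}\in\mathcal{I}_{S,N_S}/\mathcal{I}_{S,N_S}^2$, equivalently $g^r_{i,\alpha}|_S=0$. The structural fact that makes this possible is that the obstruction cocycle \eqref{classFol1} is valued in $\mathcal{I}_{S,N_S}\V$: by the transition rule $v^r_\beta=v^s_\alpha\,\partial z^r_\alpha/\partial z^s_\beta$ of $N_S$, every entry $\partial v^r_\beta/\partial z^j_\alpha$ carries a factor $v^s_\alpha\in\mathcal{I}_{S,N_S}$. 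Consequently the naive local lifts $\partial_{i,\alpha}\mapsto\partial/\partial z^i_\alpha$ already agree on $S$ and glue to a canonical splitting of \eqref{folest} over $S$; the plan is to normalise $\mu$, by subtracting a global section of $\Hom(\tilde{\F}/\V,\V)$ restricting to $\mu|_S$ minus this canonical splitting, so that $\mu$ restricts to the canonical splitting over $S$ and hence $g^r_{i,\alpha}|_S=0$. This normalisation is the delicate point: it requires lifting $\mu|_S$ from $S$ to $S_N(1)$, whose obstruction lives in $H^1\big(S,\mathcal{I}_{S,N_S}\,\Hom(\tilde{\F}/\V,\V)\big)$, and it is precisely the ideal‑valuedness of \eqref{classFol1} that places the obstruction in this subgroup and lets it be absorbed.

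Once $\hat{\F}\subseteq\T_{S_N(1)}$ with $\hat{\F}|_S=\F$ is in hand, $\phi$ carries it to a coherent rank~$l$ subsheaf of $\T_{S(1)}$ restricting to $\F$, which is the asserted extension. I expect no involutivity to come out of this argument, and indeed none is claimed: as noted before the lemma, extending $\F$ as a \emph{foliation} would require in addition that the complement $\hat{\F}$ be closed under the bracket of Lemma~\ref{Lemma:Logaritmici}, so the splitting of \eqref{folest} alone can only yield a (possibly non-involutive) subsheaf.
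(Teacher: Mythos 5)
Your proposal follows essentially the same route as the paper's proof: work on the first infinitesimal neighborhood of the zero section in $N_S$; observe that the obstruction cocycle \eqref{classFol1} is $\mathcal{I}_{S,N_S}$-valued because of the transition rule $v^r_\beta=v^s_\alpha\,\partial z^r_\alpha/\partial z^s_\beta$; normalise the splitting so that over $S$ it agrees with the canonical one coming from $\tilde{\F}|_S=\V|_S\oplus\F$; deduce that the image of $\tilde{\F}/\V$ consists of vector fields logarithmic along $S$, hence a subsheaf of $\T_{S_N(1)}$ restricting to $\F$; and transport it to $\T_{S(1)}$ via the isomorphism of first infinitesimal neighborhoods given by the splitting hypothesis. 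Your treatment of this last transfer is in fact more explicit than the paper's, which carries out all computations in $N_S$ and then simply concludes that the sections ``belong to $\T_{S(1)}$'' (the transfer being implicit, as in Lemma \ref{LemmaFolEstNormImplFolEst}); note only that Lemma \ref{Lemma:Logaritmici} gives just one direction of your intrinsicness claim ($\T_{S(1)}$ acts by derivations of $\Ol_{S(1)}$), and the converse (every derivation of $\Ol_{S(1)}$ arises from $\T_{S(1)}$), though true by a local computation, is not in the paper.

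The one genuine weak point is your justification of the normalisation. You correctly reduce it to lifting the global section $d:=\mu|_S-(\text{canonical splitting})\in H^0(S,\Hom(\F,\V|_S))$ to a section of $\Hom(\tilde{\F}/\V,\V)$ over $S_N(1)$, with obstruction in $H^1\big(S,\mathcal{I}_{S,N_S}\Hom(\tilde{\F}/\V,\V)\big)$. But the sentence claiming that the ideal-valuedness of \eqref{classFol1} ``places the obstruction in this subgroup and lets it be absorbed'' is circular: ideal-valuedness is what makes $d$ a well-defined global section in the first place, while the vanishing of its image under the connecting map is exactly equivalent to the existence of the lift you are trying to produce, and does not follow from ideal-valuedness. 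What actually closes the argument is that the restriction map $H^0\big(S_N(1),\Hom(\tilde{\F}/\V,\V)\big)\to H^0\big(S,\Hom(\F,\V|_S)\big)$ is split surjective: both $\tilde{\F}/\V$ and $\V$ are pull-backs via the bundle projection $\pi$ of $\F$ and $N_S$ (their transition matrices in the frames $\partial_{i,\alpha}$ and $\partial/\partial v^r_\alpha$ are $\pi^*$ of functions on $S$), so $d$ lifts canonically as the fibre-constant section $\pi^*d$; equivalently, the zero section of $N_S$ carries a canonical infinitesimal retraction. With this supplement your proof is complete. In fairness, the paper is no more careful at this very point: it asserts outright that ``the components of the cochain $\{U_\alpha,\sigma_\alpha\}$ are identically $0$ when restricted to $S$'', which is likewise a normalisation requiring exactly this lifting argument.
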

\begin{proof}
Suppose we are working in an atlas adapted to $S$ and $\F$; on $S$ we have the following isomorphism:
\begin{equation}\label{EquationFoliSplittingOnS}
\tilde{\F}|_S=\mathcal{V}|_S\oplus \F.
\end{equation}
This follows directly from our construction of $\tilde{\F}$ as the pull-back foliation defined by the
integrable system $\{\pi^*(dz_{\alpha}^{m+l+1}|_S),\ldots, \pi^*(dz_{\alpha}^n|_S)\}$.
Therefore we have that $\tilde{\F}/\mathcal{V}|_S\simeq\F$ and this implies that the cochain
representing \eqref{classFol1} vanishes identically when restricted to $S$. Therefore we know that the components
of the cochain $\{U_{\alpha},\sigma_{\alpha}\}$ are identically $0$ when restricted to
$S$.
Let $v$ be a section of $\tilde{\F}/\mathcal{V}$; its image $\tau_{\alpha}-\sigma_{\alpha}(v)$ is a section of
$\tilde{\F}$. From the discussion above we can remark is that $\sigma_{\alpha}(v)|_S\equiv 0$;
moreover, since on $S$
\eqref{EquationFoliSplittingOnS} holds we have that $\tau_{\alpha}(v)|_S\in\F\subset \T_S$ and this proves that $v$
belongs to $\T_{S(1)}$.
\end{proof}
\begin{Corollary}
Let $S$ be a splitting submanifold of $M$, let $\F$ be a rank $1$ foliation of $S$; if the sequence 
\begin{equation*}
\xymatrix{0 \ar[r] &\mathcal{V} \ar[r]^{\iota} &\tilde{\F} \ar[r]^{\pr} &\tilde{\F}/\mathcal{V} \ar[r] & 0}.
\end{equation*}
splits on the first infinitesimal neighborhood of $S$ embedded as the zero section of its normal bundle in $M$, then
$\F$ extends as a foliation of the first infinitesimal neighborhood of $S$ in $M$.
Moreover, we can find an atlas adapted to $S$ and $\F$ given by a collection of charts
$\{U_{\alpha},(v^1_{\alpha},\ldots,v^m_{\alpha},z^{m+1}_{\alpha}\ldots,z^n_{\alpha})\}$ such that the class
\eqref{classFol1} can be represented by the $0$ cochain.
\end{Corollary}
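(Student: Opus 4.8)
The plan is to build directly on Lemma \ref{LemmaExtNonInv}, which already yields that $\F$ extends as a coherent rank $1$ subsheaf $\F'$ of $\T_{S(1)}$; for the first assertion the only additional work is to check that this $\F'$ meets the three requirements of Definition \ref{Def:FolInf}. Coherence and the rank are inherited from Lemma \ref{LemmaExtNonInv}. Involutivity is automatic in rank $1$: if $v$ is a local generator of $\F'$, any two local sections have the form $gv,hv$ and, since sections of $\T_{S(1)}$ act as derivations (Lemma \ref{Lemma:Logaritmici}) and $[v,v]=0$, one gets $[gv,hv]=(g\,v(h)-h\,v(g))v\in\F'$. Finally, because $\F'|_S=\F$ is a rank $1$ regular foliation of $S$, about each point one may choose the generator $v$ with $v|_S\neq 0$; then $v$ is part of an $\Ol_{S(1)}$-frame of $\T_{S(1)}$ and $\T_{S(1)}/\F'$ is $\Ol_{S(1)}$-locally free. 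Hence $\F'$ is a foliation of $S(1)$, and, $S$ being split, this transports to a foliation of the first infinitesimal neighborhood of $S$ in $M$ (equivalently, apply Lemma \ref{LemmaFolEstNormImplFolEst}).

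For the ``moreover'' part I would feed the foliation $\F'$ of $S(1)$ into the Frobenius theorem for infinitesimal neighborhoods, Corollary \ref{Frobenius}. This produces an atlas adapted to $S$ and $\F$ in which $\F'$ is generated by $\partial/\partial z^{m+1}_\alpha$ and the transition data satisfy \eqref{secondorder}, i.e. $[\partial z^t_\alpha/\partial z^i_\beta]_2=0$ for $t$ normal to $\F$ and $i=m+1$ along $\F$. Using the splitting of $S$ I then identify the normal coordinates $z^1_\alpha,\ldots,z^m_\alpha$ with fibre coordinates $v^1_\alpha,\ldots,v^m_\alpha$ of $N_S$, obtaining an $N_S$-atlas of exactly the form $\{U_\alpha,(v^1_\alpha,\ldots,v^m_\alpha,z^{m+1}_\alpha,\ldots,z^n_\alpha)\}$ required in the statement.

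It then remains to check that in this $N_S$-atlas the cocycle \eqref{classFol1} is not merely cohomologous to zero but is literally the zero cochain. The preceding Proposition shows that its entries are
\[-\bigg[\frac{\partial z^j_{\alpha}}{\partial z^i_{\beta}}\frac{\partial v^{r}_{\beta}}{\partial z^j_{\alpha}}\bigg]_2=-\bigg[v^s_\alpha\,\frac{\partial z^j_\alpha}{\partial z^i_\beta}\,\frac{\partial^2 z^r_\alpha}{\partial z^s_\alpha\partial z^j_\beta}\bigg]_2,\]
and that these vanish as soon as $\partial z^s_\alpha/\partial z^j_\beta\in\mathcal{I}_S^2$ for $s=1,\ldots,m$ and $j$ along $\F$. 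But this is precisely what \eqref{secondorder} guarantees (taking $t=s$, which is normal to $S$ and hence normal to $\F$), so every coefficient of \eqref{classFol1} lies in $\mathcal{I}_S^2$ and the cochain is identically $[0]_2$. I expect the main obstacle to be this final bookkeeping: one must pass cleanly between the condition \eqref{secondorder}, formulated in the $M$- (equivalently abstract $S(1)$-) coordinates, and the derivatives of the normal-bundle transition functions $v^r_\beta=v^s_\alpha\,\partial z^r_\alpha/\partial z^s_\beta$ on the first infinitesimal neighborhood of the zero section, invoking the isomorphism $\mathcal{I}_S/\mathcal{I}_S^2\cong\mathcal{I}_{S,N_S}/\mathcal{I}_{S,N_S}^2$ of Remark \ref{InfNeigh} and carefully distinguishing the index ranges that are normal to $S$, normal to $\F$, and along $\F$.
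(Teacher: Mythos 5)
Your proposal is correct and follows essentially the same route as the paper's own (very terse) proof: rank $1$ forces involutivity of the extension, the splitting of $S$ (via Lemma \ref{LemmaExtNonInv} / Lemma \ref{LemmaFolEstNormImplFolEst}) transports the foliation from the normal bundle picture to $S(1)$ in $M$, and Corollary \ref{Frobenius} produces the atlas. The only difference is that you spell out what the paper leaves implicit — the verification of the axioms of Definition \ref{Def:FolInf} and the bookkeeping showing that \eqref{secondorder} makes the coefficients of \eqref{classFol1} lie in $\mathcal{I}_S^2$ — and these details are accurate.
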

\begin{proof}
If $\tilde{\F}/\V$ has rank $1$ we have that its image through the splitting morphism of \eqref{folest} is a rank $1$
(therefore involutive) subbundle of $\T_{S_N(1)}$.
Thanks to Lemma \ref{LemmaFolEstNormImplFolEst} we have the first part of the assertion.
Corollary \ref{Frobenius} gives us the second part of the assertion.
\end{proof}
\begin{Corollary}\label{FirstOrder}
Let $M$ be a $n$-dimensional complex manifold, $S$ a codimension $m$ splitting submanifold, $\F$ a
regular foliation of
$S$. Suppose $S$ admits first order extendable tangent bundle, then $\F$ extends to a subsheaf of
$\T_{S(1)}$.
\end{Corollary}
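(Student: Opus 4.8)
The plan is to route the argument through Lemma~\ref{LemmaExtNonInv}: it is enough to prove that, under the stated hypotheses, the sequence \eqref{folest} splits on the first infinitesimal neighborhood of $S$ embedded as the zero section of $N_S$. Since $S$ is assumed splitting, Lemma~\ref{LemmaExtNonInv} then delivers exactly the conclusion that $\F$ extends as a subsheaf of $\T_{S(1)}$, so no involutivity or freeness of the quotient needs to be checked.

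First I would unpack the hypothesis of first order extendable tangent bundle. Applying Definition~\ref{CondStar} with $\F=\T_S$, so that $l=n-m$ and the only transverse indices $t$ are the normal ones $1,\ldots,m$, one obtains an atlas adapted to $S$ in which
\[\bigg[\frac{\partial z^r_\beta}{\partial z^p_\alpha}\bigg]_2=0,\qquad r=1,\ldots,m,\ \ p=m+1,\ldots,n,\]
that is $\partial z^r_\beta/\partial z^p_\alpha\in\I^2$ for every normal index $r$ and \emph{every} tangential index $p$, and for every ordered pair of overlapping charts. The difficulty is that this atlas is adapted only to $S$, while the sufficient condition for the splitting of \eqref{folest} (the Proposition establishing that \eqref{folest} splits on the first infinitesimal neighborhood from a bound on the second derivatives of the coordinate changes) has to be verified in an atlas adapted to $S$ \emph{and} $\F$. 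I expect this atlas reconciliation to be the main obstacle.

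To remove it I would straighten $\F$ while leaving the normal directions untouched. Starting from the atlas above, the Holomorphic Frobenius theorem applied to $\F$ on $S$ furnishes, chart by chart, new tangential coordinates $\tilde z^{m+1},\ldots,\tilde z^n$, depending on $z^{m+1},\ldots,z^n$ only, for which $\F|_S$ is generated by $\partial/\partial\tilde z^{m+1}|_S,\ldots,\partial/\partial\tilde z^{m+l}|_S$; one keeps $\tilde z^r:=z^r$ for $r=1,\ldots,m$. After shrinking to make each tangential change biholomorphic, the resulting atlas is adapted to $S$ and $\F$. Because the coordinate change enters the normal variables only through the identity, the chain rule gives $\partial\tilde z^r_\beta/\partial\tilde z^p_\alpha=\sum_q(\partial z^r_\beta/\partial z^q_\alpha)(\partial z^q_\alpha/\partial\tilde z^p_\alpha)$, summed over tangential $q$, which still lies in $\I^2$ since each factor $\partial z^r_\beta/\partial z^q_\alpha$ does. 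Thus the first order extendability estimate survives in an atlas adapted to $S$ and $\F$.

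In this atlas I would restrict the tangential index to the foliated directions $i=m+1,\ldots,m+l$, so that $\partial z^r_\alpha/\partial z^i_\beta\in\I^2$, and then differentiate along a normal direction $z^s_\beta$. Using the bookkeeping $\frac{\partial}{\partial z^s}(\I^2)\subseteq\I$ (whereas $\frac{\partial}{\partial z^p}(\I^2)\subseteq\I^2$ along tangential $z^p$), this yields $\partial^2 z^r_\alpha/(\partial z^i_\beta\,\partial z^s_\beta)\in\I$ for all $r,s=1,\ldots,m$ and $i=m+1,\ldots,m+l$, which is precisely the hypothesis of the Proposition. Hence \eqref{folest} splits on the first infinitesimal neighborhood, and Lemma~\ref{LemmaExtNonInv} finishes the proof. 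The remaining points are routine: the shrinking needed to keep the straightened charts genuine, and the elementary derivative computations on powers of $\I$.
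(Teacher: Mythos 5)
Your proof is correct and follows essentially the same route as the paper's: derive the condition $\frac{\partial^2 z^r_{\alpha}}{\partial z^s_{\beta}\partial z^i_{\beta}}\in \mathcal{I}_S$ from first order extendability, conclude the vanishing of the class \eqref{classFol1} and hence the splitting of \eqref{folest}, and extract the extension via Lemma \ref{LemmaExtNonInv}. The only difference is that your tangential-straightening argument makes explicit the atlas reconciliation --- passing from the atlas furnished by first order extendability to one also adapted to $\F$ while preserving $\big[\partial z^r_{\beta}/\partial z^i_{\alpha}\big]_2=0$ --- a step the paper's proof glosses over by simply declaring that it works ``in an atlas adapted to $S$ and $\F$''.
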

\begin{proof}
We work in an atlas adapted to $S$ and $\F$; by Corollary \ref{Frobenius} first order extendable
tangent bundle implies 
\[\bigg[\frac{\partial z^r_{\alpha}}{\partial z^p_{\beta}}\bigg]_2=0\] 
which in turn implies
\[\frac{\partial^2 z^r_{\alpha}}{\partial z^s_{\beta} \partial z^p_{\beta}}\in \mathcal{I}_S\]
and therefore that
\[\frac{\partial^2 z^r_{\alpha}}{\partial z^s_{\beta} \partial z^i_{\beta}}\in \mathcal{I}_S,\]
for $r,s=1,\ldots,m$, $i=m+1,\ldots,m+l$, which in turn implies the vanishing of \eqref{classFol1}
and the splitting of sequence \eqref{folest}; the extension of $\F$ is then given by the image of
$\tilde{\F}/\mathcal{V}$ in $\tilde{\F}$.
\end{proof}
\begin{Remark}
The reason why the splitting of \eqref{folest} is not a sufficient condition for the foliation to extend to the first
infinitesimal neighborhood lies in the fact that the image of $\tilde{\F}/\mathcal{V}$ may not be
involutive.
If this image is involutive we have a statement similar to the one in the last Corollary; anyway even if it is not
involutive, thanks to the results in section \ref{SectionNonInvolutive}, the splitting of \eqref{folest} is enough to
get some important insights on the Khanedani-Lehmann-Suwa action.
\end{Remark}
\begin{Remark}\label{Rem:ExtGenerators}
We want to see what happens in coordinates when we can extend the foliation.
First of all, the vanishing of the class \eqref{classFol1} in cohomology means there exists a cochain
$\{U_{\alpha},\sigma_{\alpha}\}\in C^{0}(S_N(1),(\F/\mathcal{V}))^*\otimes \mathcal{V})$ such that:
\begin{equation*}
\sigma_{\beta}-\sigma_{\alpha}=-\bigg[\frac{\partial z^j_{\alpha}}{\partial z^i_{\beta}}\frac{\partial
v^{r}_{\beta}}{\partial z^j_{\alpha}}\bigg]_2\omega^i_{\beta}\otimes\frac{\partial}{\partial v^{r}_{\beta}}.
\end{equation*}
In a coordinate system adapted to $S$ and $\F$ on each $U_{\alpha}$ we can write the elements of the cochain as
\[\sigma_{\alpha}=[c^s_{j,\alpha}]_2\omega^j_{\alpha}\otimes\frac{\partial}{\partial v^s_{\alpha}}.\]
Since the sequence splits when it is restricted to $S$ we can assume that the coefficients $c^s_{j,\alpha}$ of
each $\sigma_{\alpha}$ belong to $\mathcal{I}_S/\mathcal{I}_S^2$.
Without loss of generality we can suppose the local lifts $\tau_{\alpha}$ send the generators of
$\tilde{\F}/\mathcal{V}$, that we denote by $\partial_{i,\alpha}$, in the coordinate fields $\partial/\partial
z^i_{\alpha}$ (the difference about two different choices of lifts is absorbed by the cochain).
Then a generator $\partial/\partial z^i_{\alpha}|_S$ of $\F$ on $U_{\alpha}$ extends to the section $v$ of $\T_{S(1)}$
given by:
\[-[c^s_{j,\alpha}]_2\frac{\partial}{\partial v^s_{\alpha}}+\frac{\partial}{\partial z^j_{\alpha}}.\]
\end{Remark}

\section{Action of subsheaves of $\F$ on $\N_{\F,M}$}\label{SectionNonInvolutive}
As usual let $\F$ be a foliation of $S$: in this section we shall discuss how the existence of
coherent subsheaves of
$\T_{S(1)}$ that restricted to $S$ are subsheaves of $\F$ gives rise to variation actions on
$\N_{\F,M}$.
\begin{Lemma}
Let $\E$ be a coherent subsheaf of $\T_{S(1)}$ that, restricted to $S$, is a subsheaf of $\F$. Then
$\E$ is a subsheaf
of $\T_{M,S(1)}^{\F}$. 
\end{Lemma}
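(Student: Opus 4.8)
The plan is to reduce the sheaf inclusion to a pointwise statement about local sections and then simply unwind the three coordinate descriptions involved. Since $\T_{S(1)}$ is by definition the image of $\T_M(\log S)\otimes_{\Ol_M}\Ol_{S(1)}$ inside $\T_{M,S(1)}$, it is already a subsheaf of $\T_{M,S(1)}$, so it is meaningful to ask whether $\E$ lands inside $\T_{M,S(1)}^{\F}$; and it suffices to verify the inclusion on local sections in an atlas adapted to $S$ and $\F$ (Definition \ref{Def:AdaptedFS}).

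First I would fix such a chart and take a local section $v$ of $\E$. Since $\E\subseteq\T_{S(1)}$, Remark \ref{StruttLog} lets me write
\[v=[a^r]_2\frac{\partial}{\partial z^r}+[a^p]_2\frac{\partial}{\partial z^p},\]
with $a^r\in\mathcal{I}_S$ for $r=1,\ldots,m$. Splitting the tangential index $p=m+1,\ldots,n$ into the $\F$-directions $i=m+1,\ldots,m+l$ and the remaining directions $t'=m+l+1,\ldots,n$, the component along $\partial/\partial z^{t'}$ is $[a^{t'}]_2$, and the whole point is to show that these last coefficients also lie in $\mathcal{I}_S$.

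The key step uses the hypothesis $\E|_S\subseteq\F$. Restricting $v$ to $S$ amounts to applying $\theta_1$ to its coefficients; as $a^r\in\mathcal{I}_S$ the normal terms drop out, leaving
\[v|_S=[a^i]_1\frac{\partial}{\partial z^i}\bigg|_S+[a^{t'}]_1\frac{\partial}{\partial z^{t'}}\bigg|_S.\]
In an atlas adapted to $S$ and $\F$ the foliation $\F|_S$ is generated by $\partial/\partial z^{m+1}|_S,\ldots,\partial/\partial z^{m+l}|_S$, and the coordinate fields $\partial/\partial z^i|_S$, $\partial/\partial z^{t'}|_S$ are $\Ol_S$-independent; hence $v|_S\in\F$ forces $[a^{t'}]_1=0$, i.e.\ $a^{t'}\in\mathcal{I}_S$, for every $t'=m+l+1,\ldots,n$. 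Together with $a^r\in\mathcal{I}_S$ this yields $[a^t]_1=0$ for all $t=1,\ldots,m,m+l+1,\ldots,n$, which by Remark \ref{incoord} is exactly the condition characterizing $\T_{M,S(1)}^{\F}$. As this holds for every local section over every chart, $\E\subseteq\T_{M,S(1)}^{\F}$.

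I do not expect a genuine obstacle: the argument is essentially bookkeeping matching the coordinate description of $\T_{S(1)}$, the generators of $\F|_S$ in the adapted atlas, and the characterization of $\T_{M,S(1)}^{\F}$ in Remark \ref{incoord}. The only points deserving a little care are the index split $p=\{i\}\cup\{t'\}$ and the observation that restriction to $S$ is coefficientwise application of $\theta_1$; once these are in place the vanishing of the $t'$-components is immediate. Coherence of $\E$ plays no role beyond letting us argue on local sections.
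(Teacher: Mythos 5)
Your proof is correct and follows essentially the same route as the paper's: work in an atlas adapted to $S$ and $\F$, write a local section of $\E$ in the coordinate form given by Remark \ref{StruttLog}, and conclude via the characterization of $\T_{M,S(1)}^{\F}$ in Remark \ref{incoord}. The only difference is that the paper compresses the key step into the single assertion that the coefficients along all directions normal to $\F$ lie in $\mathcal{I}_S$, whereas you correctly justify it by splitting the tangential indices and using the hypothesis $\E|_S\subseteq\F$ to kill the $t'$-components.
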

\begin{proof}
Let $\{U_{\alpha},z_{\alpha}\}$ be an atlas adapted to $S$ and $\F$. On each coordinate chart, a
section $v$ of $\E$ can
be written as:
\[[a^u]_2\frac{\partial}{\partial z^u}+[a^i]_2\frac{\partial}{\partial z^i},\]
with $a^u\in\mathcal{I}_S$. Therefore, thanks to Remark \ref{incoord}, we know that $v$ belongs to
$\T_{M,S(1)}^{\F}$.
\end{proof}
\begin{Definition}
Let $\E$ be a coherent subsheaf of $\T_{S(1)}$; we say it is
\textbf{$S$-faithful}\index{$S$-faithful} if the restriction map $|_S:\E\to \E|_S$ is
injective.
\end{Definition}

\begin{Proposition}\label{Prop:NonInvolutive}
Suppose $\E$ is a coherent subsheaf of $\T_{S(1)}$ that, restricted to $S$, is a subsheaf of $\F$,
that is
generated on an open set $U_{\alpha}$ by $\tilde{v}_{1,\alpha},\ldots,\tilde{v}_{k,\alpha}$
and is $S$-faithful.
Then there exists a partial holomorphic connection $(\delta,\E)$ for $\N_{\F,M}$.
\end{Proposition}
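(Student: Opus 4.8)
The plan is to obtain $(\delta,\E)$ by restricting the universal holomorphic connection of Proposition~\ref{PropHolConnection} along the sheaf $\E$. By the preceding Lemma we already know $\E\subseteq\T_{M,S(1)}^{\F}$, so composing the inclusion with the quotient $\pi:\T_{M,S(1)}^{\F}\to\mathcal{A}$ produces a morphism $\iota:\E\to\mathcal{A}$, $v\mapsto\pi(v)$. This morphism is compatible with the module structures, $\iota(gv)=g\,\iota(v)$ for $g\in\Ol_S$, because $\mathcal{I}_S\E\subseteq\mathcal{I}_S\T_{M,S(1)}^{\F}=\ker\pi$ and $\pi$ is the $\theta_1$-equivariant quotient used to build $\mathcal{A}$ in Lemma~\ref{BenDef}. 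I would then set, for a section $v$ of $\E$ and a section $s$ of $\N_{\F,M}$,
\[\delta_v(s):=\tilde{X}_{\iota(v)}(s)=\pr([v,\tilde{s}]),\]
where $\tilde{s}\in\T_{M,S(1)}$ is any lift with $\pr\circ\Theta_1(\tilde{s})=s$.

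First I would record that this is well defined for each fixed $v$: since $v\in\T_{M,S(1)}^{\F}$ and $\pi(v)=\iota(v)$, this is literally the value of the universal connection $\tilde{X}$, whose independence of the choice of $\tilde{s}$ was established in Proposition~\ref{PropHolConnection}. Next I would check the two connection axioms straight from the fact that $\tilde{X}$ is a holomorphic $\Theta_1$-connection. The $\Ol_S$-linearity in the first slot, $\delta_{gv}=g\,\delta_v$ for $g\in\Ol_S$, follows from the $\Ol_S$-linearity of $\tilde{X}$ in its $\mathcal{A}$-argument together with $\iota(gv)=g\,\iota(v)$. For the Leibniz rule I would expand
\[\delta_v(fs)=\tilde{X}_{\iota(v)}(fs)=f\,\tilde{X}_{\iota(v)}(s)+\Theta_1(\iota(v))(f)\cdot s\]
and then identify the anchor: since $\Theta_1\circ\pi$ carries $v$ to its restriction $v|_S$, one has $\Theta_1(\iota(v))=v|_S\in\F\subseteq\T_S$, so the identity reads $\delta_v(fs)=f\,\delta_v(s)+(v|_S)(f)\,s$, which is the desired partial Leibniz rule with derivation $v|_S$.

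The step I expect to carry the real weight — and where the $S$-faithfulness hypothesis is indispensable — is checking that $(\delta,\E)$ is a partial holomorphic connection \emph{along} $\E$ in the genuine sense, i.e. that $\delta_v$ depends only on the tangential direction $v|_S$ and that this direction sheaf embeds into $\T_S$. The subtle point is that for two sections $v,v'$ of $\T_{S(1)}$ with $v|_S=v'|_S$ one does \emph{not} have $\pi(v)=\pi(v')$ in general: by Remark~\ref{incoord}, a difference of the form $[z^r]_2\,\partial/\partial z^s$ restricts to zero on $S$ yet has nonzero image in the free summand $\Hom(\N_S,\N_{\F,M})=\ker\Theta_1$ of $\mathcal{A}$ exhibited in Theorem~\ref{TheoAtiyahSplitting}. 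The $S$-faithfulness of $\E$ says precisely that the restriction map $\E\to\T_S$, $v\mapsto v|_S$, is injective, so such a difference cannot lie in $\E$; hence $v|_S=v'|_S$ forces $v=v'$ and therefore $\iota(v)=\iota(v')$ and $\delta_v=\delta_{v'}$. Consequently $\delta$ descends through the isomorphic image $\E|_S\subseteq\F\subseteq\T_S$, and $(\delta,\E)$ is a bona fide partial holomorphic connection on $\N_{\F,M}$. Finally, since the $\tilde{v}_{1,\alpha},\ldots,\tilde{v}_{k,\alpha}$ generate $\E$ over $U_\alpha$, the operator is determined on each chart by the values $\delta_{v_{i,\alpha}}(s)=\pr([\tilde{v}_{i,\alpha},\tilde{s}])$, which one may also read off directly to confirm that the local data glue into the global $\delta$.
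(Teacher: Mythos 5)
Your overall strategy is the paper's own: you use the inclusion $\E\subseteq\T_{M,S(1)}^{\F}$ from the preceding Lemma, compose with the quotient $\pi:\T_{M,S(1)}^{\F}\to\mathcal{A}$, and restrict the universal holomorphic connection $\tilde{X}$ of Proposition~\ref{PropHolConnection}. Your verifications of $\Ol_S$-linearity, of the Leibniz rule, and of the anchor identity $\Theta_1(\pi(v))=v|_S$ are correct and mirror the paper, which packages the same construction as a splitting $\tilde{\pi}:\E|_S\to\mathcal{A}$ of the Atiyah sequence over $\E|_S$ and then invokes the universal connection. You also correctly identify the crux: the connection must be parametrized by $\E|_S\subseteq\T_S$, so one must show $\delta_v$ depends only on $v|_S$.

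It is exactly at that descent step that your justification fails. You read $S$-faithfulness literally as injectivity of $|_S:\E\to\E|_S$ and conclude that $v|_S=v'|_S$ forces $v=v'$. But literal injectivity is never available: $\E$ is an $\Ol_{S(1)}$-submodule of $\T_{S(1)}$, so $\mathcal{I}_S\E\subseteq\E$, every element of $\mathcal{I}_S\E$ restricts to zero on $S$, and $[g]_2v\neq 0$ whenever $[g]_2\neq 0$ in $\mathcal{I}_S/\mathcal{I}_S^2$ and $v|_S\neq 0$ (compare leading terms, using that $\mathcal{I}_S$ is the ideal of a submanifold). Hence any $\E$ with $\E|_S\neq 0$ has nonzero sections killed by restriction, and the injectivity you invoke holds only for $\E=0$. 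The usable content of $S$-faithfulness --- the one the paper's proof actually exploits --- is that the kernel of restriction is \emph{no larger than} $\mathcal{I}_S\E$: a section of $\E$ restricting to zero has all coefficients in $\mathcal{I}_S$ with respect to the local generators, i.e.\ it is of the form $[g^k]_2\tilde{v}_{k,\alpha}$ with $g^k\in\mathcal{I}_S$. The correct descent argument is then the paper's: if $v|_S=v'|_S$ then $v-v'\in\mathcal{I}_S\E\subseteq\mathcal{I}_S\T_{M,S(1)}^{\F}=\ker\pi$ (by definition of $\mathcal{A}$), so $\pi(v)=\pi(v')$ and $\delta_v=\delta_{v'}$ even though $v\neq v'$ in general. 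Note that your own observation --- that elements such as $[z^r]_2\,\partial/\partial z^s$ restrict to zero yet have nonzero image in $\ker\Theta_1\subseteq\mathcal{A}$ --- is precisely what this corrected form of $S$-faithfulness rules out of $\E$ (such elements do not lie in $\mathcal{I}_S\E$), so your instinct about where the hypothesis enters is right; only the implication you draw from it ("$v=v'$") is wrong. With this one step replaced, your proof closes, and your global, generator-free definition of $\iota=\pi|_{\E}$ even spares you the chart-to-chart compatibility computation that the paper carries out for its locally chosen extensions.
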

\begin{proof}
Since there are no generators sent to $0$ by the restriction to $S$,
then $\E|_{S\cap U_{\alpha}}$ is generated by $v_{k,\alpha}:= \tilde{v}_{k,\alpha}|_S$.
Please keep in mind that the generators of $\mathcal{E}|_S$ are always the restriction of the
generators of
$\mathcal{E}$, so, chosen the local generators of $\E$ we have a canonical way to extend the local
generators of
$\E|_S$.

Let $\pi$ be the projection from $\T_{M,S(1)}^{\F}$ to $\mathcal{A}$ and $w$ a section of $\E|_S$;
we define a map
$\tilde{\pi}:\E|_S\to\mathcal{A}$ by $\pi(w):=\pi(\tilde{w}),$ where $\tilde{w}$ is an extension of
$w$ as a
section of $\E$.
On a trivializing neighborhood for $\E$ a section has the following form: $w=[f^k]_1 v_{k,\alpha}\in
\E|_{S\cap
U_{\alpha}}$.
The difference between two representatives $\tilde{w}_1$ and $\tilde{w}_2$ of $w$ in $\E$ on
$U_{\alpha}$ can be written
in the following form:
\[[g^k]_2 \tilde{v}_{k,\alpha}\]
where the $g^k$ belong to $\mathcal{I}_S/\mathcal{I}_S^2$ and therefore belongs to
$\mathcal{I}_S\T_{M,S(1)}^{\F}$. 
Therefore the map $\tilde{\pi}$ does not depend on the extension chosen.

Suppose now we have a section $w$ of $\mathcal{E}|_S$ and two coordinate charts $U_{\alpha}$ and
$U_{\beta}$ on which
the section is represented as $w_{\alpha}=[f^k_{\alpha}]_1 v_{k,\alpha}$ and
$w_{\beta}=[f^k_{\beta}]_1 v_{k,\beta}$.
Now, we have that, since $\E$ is a subbundle of $\T_{S(1)}$
\[\tilde{v}_{k,\alpha}=[(h_{\alpha\beta})_k^h]_2\tilde{v}_{h,\beta},\]
which implies also that:
\[ [f^k_{\alpha}(h_{\alpha\beta})_k^h]_1=[f^h_{\beta}]_1.\]
We take two extensions $\tilde{w}_{\alpha}$ and $\tilde{w}_{\beta}$ on $U_{\alpha}$ and $U_{\beta}$
respectively: we
claim their difference lies in
$\mathcal{I}_S\T_{M,S(1)}^{\F}.$
We compute:
\begin{align*}
(\tilde{w}_{\beta}-\tilde{w}_{\alpha})|_S &=([\tilde{f}_{\alpha}^k]_2
\tilde{v}_{k,\alpha}-[\tilde{f}_{\beta}^h]_2 \tilde{v}_{h,\beta})|_S\\
&=([\tilde{f}_{\alpha}^k]_2 [h_{\alpha\beta,k}^h]_2\tilde{v}_{h,\beta}-[\tilde{f}^h_{\beta}]_2
\tilde{v}_{h,\beta})|_S\\
&=\big[[f_{\alpha}^k (h_{\alpha\beta})_k^h]_2 -[f_{\beta}^h]_2\big]_1 v_{h,\beta}=[0]_1.
\end{align*}
As stated, the difference between the two extensions lies in $\mathcal{I}_S\T_{M,S(1)}^{\F}$.
So, the map $\tilde{\pi}:\mathcal{E}|_S \to \mathcal{A}$ is an $\Ol_S$-morphism between $\E|_S$ and
$\A$ giving a splitting
of the following sequence:
\[\xymatrix{ 0\ar[r]
&\Hom(\mathcal{N}_S,\mathcal{N}_{\F,M})\ar[r]&\A_{\F,\E|_S}\ar[r]^{\Theta_1}&\E|_S\ar[r]&0 }.\]
where $\A_{\F,\E|_S}$ is the preimage of $\E|_S$ in $\A$ through $\Theta_1$.
\end{proof}
Therefore, recalling Section \ref{SecAtiyah} we
have that there is a partial
holomorphic connection on $\N_{\F,M}$ along $\E|_S$, given as follows:
\[\delta_v(s)=\tilde{X}_{\pi(\tilde{v})}(s),\]
where $\tilde{X}$ is the universal connection on $\A_{\N_{\F,M}}$.
\begin{Remark}
This connection may not be flat. Therefore we can use Bott Vanishing Theorem \ref{BottVanThe}
only in its non involutive form. But, in case $\E$ is involutive a stronger result holds.
\end{Remark}
\begin{Corollary}\label{Cor:InvolutiveSubsheaves}
Suppose $\E$ is an involutive coherent subsheaf of $\T_{S(1)}$ that, restricted to $S$, is a
subsheaf of $\F$ and $S$-faithful.
Then there exists a flat partial holomorphic connection $(\delta,\E)$ for $\N_{\F,M}$.
\end{Corollary}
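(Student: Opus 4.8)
The plan is to build directly on Proposition \ref{Prop:NonInvolutive}: the $S$-faithfulness hypothesis already produces a partial holomorphic connection $(\delta,\E|_S)$ on $\N_{\F,M}$, given in the form
\[\delta_v(s)=\tilde{X}_{\pi(\tilde{v})}(s)=\pr([\tilde{v},\tilde{s}]),\]
where $\tilde{v}\in\E\subseteq\T_{M,S(1)}^{\F}$ is an extension of $v\in\E|_S$, $\tilde{s}\in\T_{M,S(1)}$ is a lift of $s$, and $\tilde{X}$ is the universal connection of Proposition \ref{PropHolConnection}. Hence the only thing left to establish is flatness, and the genuinely new input relative to Proposition \ref{Prop:NonInvolutive} is the involutivity of $\E$. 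I expect the flatness computation to run exactly parallel to that in Corollary \ref{CorollaryHolomorphicAction} and in part (3) of Proposition \ref{PropHolConnection}, with involutivity playing precisely the role that lets the Jacobi identity close up.

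First I would record the two consequences of involutivity that I need. Since the bracket on $\T_{S(1)}$ restricts on $S$ to the usual Lie bracket on $\T_S$, the restriction $\E|_S$ of an involutive $\E$ is itself involutive; thus for $u,v\in\E|_S$ the bracket $[u,v]$ is again a section of $\E|_S$, so that $\delta_{[u,v]}$ is defined. Moreover, if $\tilde{u},\tilde{v}\in\E$ extend $u,v$, then by involutivity $[\tilde{u},\tilde{v}]$ is again a section of $\E$; it therefore lies in $\T_{M,S(1)}^{\F}$ and restricts on $S$ to $[u,v]$. In other words, $[\tilde{u},\tilde{v}]$ is an \emph{admissible} extension of $[u,v]$ for the purpose of computing $\delta_{[u,v]}$ (recall from Proposition \ref{Prop:NonInvolutive} that this value is independent of the chosen extension).

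Then I would compute the curvature. Starting from
\[R_{u,v}(s)=\delta_u\delta_v(s)-\delta_v\delta_u(s)-\delta_{[u,v]}(s),\]
and using, as in Corollary \ref{CorollaryHolomorphicAction}, that $\delta$ does not depend on the extension chosen for the differentiated entry, I may replace the lifts of $\pr([\tilde{v},\tilde{s}])$ and $\pr([\tilde{u},\tilde{s}])$ by the brackets themselves, and — thanks to the second observation above — I may take $[\tilde{u},\tilde{v}]$ as the extension of $[u,v]$. This rewrites the curvature as
\[R_{u,v}(s)=\pr\big([\tilde{u},[\tilde{v},\tilde{s}]]-[\tilde{v},[\tilde{u},\tilde{s}]]-[[\tilde{u},\tilde{v}],\tilde{s}]\big),\]
which vanishes identically by the Jacobi identity for the Lie bracket on $\T_M$.

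The main obstacle — and the precise place where the hypothesis is used — is the last substitution: one must be sure that the extension of $[u,v]$ entering $\delta_{[u,v]}$ can be taken to be exactly $[\tilde{u},\tilde{v}]$. Without involutivity, $[\tilde{u},\tilde{v}]$ need not be a section of $\E$ nor project into $\E|_S$, so $\delta_{[u,v]}$ could not be written as $\pr([[\tilde{u},\tilde{v}],\tilde{s}])$ and the three terms would fail to assemble into a Jacobi expression; this is exactly why Proposition \ref{Prop:NonInvolutive} could only assert existence, not flatness. A secondary point to check is that all brackets occurring are the legitimate ones: for sections of $\T_{M,S(1)}^{\F}$ the bracket is the well-defined $\{\ \cdot\ ,\ \cdot\ \}$ of Lemma \ref{Algebroide} (agreeing with the $\T_{S(1)}$-bracket on sections of $\T_{S(1)}$), whereas the mixed brackets with the lift $\tilde{s}\in\T_{M,S(1)}$ are only well defined after applying $\pr$, exactly as in the Remark following Lemma \ref{Algebroide}; since every bracket in the displayed curvature sits inside a $\pr$, this causes no difficulty.
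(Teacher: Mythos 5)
Your proof is correct and follows essentially the same route as the paper: existence comes from Proposition \ref{Prop:NonInvolutive}, and flatness is obtained by writing the curvature as $\pr\big([\tilde{u},[\tilde{v},\tilde{s}]]-[\tilde{v},[\tilde{u},\tilde{s}]]-[[\tilde{u},\tilde{v}],\tilde{s}]\big)$ and invoking the Jacobi identity, with involutivity used exactly to make $[\tilde{u},\tilde{v}]$ an admissible extension of $[u,v]$ in $\delta_{[u,v]}$. If anything, your write-up is more careful than the paper's, which leaves the admissibility of $[\tilde{u},\tilde{v}]$ and the independence from the chosen lifts implicit.
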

\begin{proof}
From Proposition \ref{Prop:NonInvolutive} we already know there exists a partial holomorphic
connection along $\E$; since $\E$ is involutive we can check if it is flat:
\[\delta_u(\delta_v(s))-\delta_v(\delta_u(s))-\delta_{[u,v]}((s))=\pr([\tilde{u},[\tilde{v},\tilde{s
}]] - [\tilde{v},[\tilde{u},\tilde{s}]] - [[\tilde{u},\tilde{v}],\tilde{s}])=0,\]
by the Jacobi identity.
\end{proof}
\begin{Remark}
In the paper \cite{ABTHolMapFol} is defined the notion of Lie Algebroid morphism; given an
involutive
coherent subsheaf of $\T_{S(1)}$ the splitting that gives rise to the partial holomorphic
connection is a Lie algebroid morphism and last Corollary mirrors the fact that the universal
partial holomorphic connection is flat (Proposition \ref{PropHolConnection}).
\end{Remark}
\begin{Corollary}\label{Cor:TangentSheaf}
Suppose $\E$ is an involutive coherent subsheaf of $\T_{S(1)}$, whose restriction to $S$ is a
foliation of $S$ and is $S$-faithful.
Then there exists a flat partial holomorphic connection $(\delta,\E)$ for $\N_{S}$.
\end{Corollary}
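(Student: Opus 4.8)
The plan is to derive this corollary as the special case $\F=\T_S$ of the preceding Corollary \ref{Cor:InvolutiveSubsheaves}. The first step is to record the identification $\N_{\T_S,M}=\N_S$: by the very definition of $\N_{\F,M}$ as the quotient $\T_{M,S}/\F$ in sequence \eqref{SesImp}, choosing $\F=\T_S$ gives $\N_{\T_S,M}=\T_{M,S}/\T_S=\N_S$, the normal bundle of the submanifold. Moreover, the remark following Theorem \ref{TheoAtiyahSplitting} already observes that for $\F=\T_S$ the Atiyah sheaf $\A$ of $\F$ reduces to the Atiyah sheaf of $S$, so the entire apparatus of Section \ref{SecAtiyah} — the locally free model of $\A$, the anchor $\Theta_1$, and the universal flat connection of Proposition \ref{PropHolConnection} — is available unchanged in this setting.

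Second, I would verify that $\E$ meets the three hypotheses of Corollary \ref{Cor:InvolutiveSubsheaves} for this choice of $\F$. Two of them, namely that $\E$ is an involutive coherent subsheaf of $\T_{S(1)}$ and that $\E$ is $S$-faithful, are assumed verbatim. The third asks that $\E|_S$ be a subsheaf of $\F$; but here $\E|_S$ is by hypothesis a foliation of $S$, hence a coherent subsheaf of $\T_S=\F$, so the condition holds automatically.

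Finally, applying Corollary \ref{Cor:InvolutiveSubsheaves} with $\F=\T_S$ produces a flat partial holomorphic connection $(\delta,\E)$ on $\N_{\F,M}$ along $\E$; under the identification $\N_{\F,M}=\N_S$ established in the first step, this is exactly the asserted connection on $\N_S$. The only delicate point is that $\F=\T_S$ is the borderline case $l=n-m$ of a foliation of $S$, so one should confirm that none of the constructions of Section \ref{SecAtiyah} degenerate in this rank; this is precisely what the remark after Theorem \ref{TheoAtiyahSplitting} guarantees, and since the proof of Corollary \ref{Cor:InvolutiveSubsheaves} relies only on the abstract splitting of the Atiyah sequence together with the Jacobi identity for the induced connection, I expect no genuine obstacle.
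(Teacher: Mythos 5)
Your proposal is correct and follows exactly the paper's own argument: the paper proves this corollary in one line by taking $\F=\T_S$ in Corollary \ref{Cor:InvolutiveSubsheaves}, which is precisely your strategy. Your additional checks — the identification $\N_{\T_S,M}=\T_{M,S}/\T_S=\N_S$ and the verification that $\E|_S$, being a foliation of $S$, is a subsheaf of $\T_S$ — merely make explicit what the paper leaves implicit.
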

\begin{proof}
If we take $\F=\T_S$ in Corollary \ref{Cor:InvolutiveSubsheaves} the assertion follows.
\end{proof}

\section{Singular holomorphic foliations of the first infinitesimal neighborhood}\label{Sec:Reduced}
This section is devoted to precise what we mean by case a singular foliations of infinitesimal
neighborhoods. In some sense, we want to prove an analogous of the following proposition, stated in
\cite{Suwa} and proved in \cite{MY}.
\begin{Proposition}\label{Prop:Reduced}
If a foliation is reduced, then $\codim S(\F)\geq 2$. If $\F$ is locally free and if $\codim
S(\F)\geq 2$, then $\F$ is reduced.
\end{Proposition}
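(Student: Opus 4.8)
The plan is to recast both implications as facts about the normal sheaf $\N_{\F}:=\T_M/\F$, which sits in the exact sequence $0\to\F\to\T_M\to\N_{\F}\to 0$. Here $S(\F)$ is the analytic set where $\F$ fails to be a subbundle of $\T_M$; since $\T_M$ is locally free, this is exactly the locus where $\N_{\F}$ is not $\Ol_M$-locally free. Moreover $\F$ is \emph{reduced} precisely when $\N_{\F}$ is torsion-free, equivalently when $\F$ agrees with its saturation inside $\T_M$. With these translations the statement becomes a purely local question on the smooth manifold $M$: when is a torsion-free coherent sheaf locally free, and how does its torsion relate to the codimension of the non-free locus?

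For the first implication I would show directly that a torsion-free sheaf is locally free away from codimension two. Localizing $\N_{\F}$ at a codimension-one point $x$ of $M$ yields a finitely generated module over $\Ol_{M,x}$, which, $M$ being smooth, is a one-dimensional regular local ring, i.e. a discrete valuation ring. Since localization preserves torsion-freeness, this module is torsion-free over a principal ideal domain and hence free. Thus $\N_{\F}$ is already free at every codimension-one point, so its non-free locus---which is $S(\F)$---can contain no codimension-one component; that is, $\codim S(\F)\ge 2$.

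For the converse I would compare $\F$ with its saturation $\overline{\F}$, namely the preimage in $\T_M$ of the torsion subsheaf $T\subseteq\N_{\F}$, so that $\overline{\F}/\F\cong T$. Outside $S(\F)$ the sheaf $\N_{\F}$ is locally free, hence torsion-free, so $T$ is supported on $S(\F)$ and $\overline{\F}=\F$ on the complement of that set, which by hypothesis has codimension $\ge 2$. Now the assumption that $\F$ is locally free enters decisively: a locally free sheaf satisfies the Hartogs extension property, so its sections over an open set $U$ are determined by their restriction to $U\setminus S(\F)$. Given a local section $s$ of $\overline{\F}$ over $U$, its restriction to $U\setminus S(\F)$ lies in $\F$, hence extends to a section $t$ of $\F$ over $U$; as $s$ and $t$ are sections of the torsion-free sheaf $\T_M$ agreeing on the dense open set $U\setminus S(\F)$, they coincide, so $s\in\F(U)$. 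Therefore $\overline{\F}=\F$, whence $T=0$, the sheaf $\N_{\F}$ is torsion-free, and $\F$ is reduced.

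The difficulty here is conceptual rather than computational: the work lies entirely in correctly identifying \emph{reduced} with torsion-freeness of $\N_{\F}$ and $S(\F)$ with the non-free locus, and then in invoking the right local-algebra inputs---that torsion-free modules over a discrete valuation ring are free, and that locally free (more generally reflexive) sheaves admit extension of sections across analytic sets of codimension $\ge 2$. It is worth emphasizing that the hypothesis that $\F$ be locally free is exactly what makes the extension argument run; for a merely coherent $\F$ one can have $\codim S(\F)\ge 2$ while $\N_{\F}$ still retains torsion concentrated on $S(\F)$, so the converse genuinely needs it.
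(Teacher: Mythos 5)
The paper offers no proof of this proposition for you to be compared against: it is stated as a known result, ``stated in \cite{Suwa} and proved in \cite{MY}'' (Mitera--Yoshizaki), and serves only as motivation for the analogous statements about foliations of infinitesimal neighborhoods. So your argument necessarily stands on its own, and in substance it is correct: it is the standard sheaf-theoretic proof. The dictionary (reduced, i.e.\ full, corresponds to $\N_{\F}$ torsion-free; $S(\F)$ is the non-locally-free locus of $\N_{\F}$), the discrete-valuation-ring argument showing that a torsion-free coherent sheaf on a manifold is locally free outside a set of codimension two, and the Riemann--Hartogs extension across $S(\F)$ combined with the identity theorem for the converse are all sound. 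Your closing remark that local freeness of $\F$ is genuinely needed is also correct: $\F=\mathfrak{m}_0\cdot\Ol\,\partial/\partial z$ on $\mathbb{C}^2$ is involutive and coherent, has $S(\F)=\{0\}$ of codimension two, yet is not full, since $\partial/\partial z$ belongs to $\F$ off the origin but not at it.

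Two steps, however, are asserted where they need to be proved, and they carry real weight. First, the equivalence of the paper's definition of \emph{reduced} (fullness) with torsion-freeness of $\N_{\F}$ is used in both directions but never verified; you even identify it as where the work lies and then omit it. It is true and short: if $\N_{\F}$ is torsion-free and $v\in\Gamma(U,\T_M)$ lies in $\F$ on $U\setminus S(\F)$, then its image in $\N_{\F}$ is supported in the nowhere dense analytic set $S(\F)$, hence is a torsion section, hence zero, so $v\in\Gamma(U,\F)$ by left exactness; conversely, a torsion germ of $\N_{\F}$ lifts locally to a vector field lying in $\F$ off $S(\F)$ (the torsion subsheaf vanishes wherever $\N_{\F}$ is locally free), and fullness forces that germ to vanish. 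Second, ``localizing at a codimension-one point'' is a scheme-theoretic operation with no literal meaning on a complex manifold; the analytic version is to localize the stalk $\N_{\F,x}$ at the prime ideal of an irreducible codimension-one component $Z$ of $S(\F)$ at a generic smooth point $x\in Z$, and then to add the spreading-out step: a basis of the resulting free module over that discrete valuation ring gives a morphism $\Ol^{\oplus r}\to\N_{\F}$ near $x$ whose kernel and cokernel have supports meeting $Z$ in proper analytic subsets, so $\N_{\F}$ is locally free at generic points of $Z$, contradicting $Z\subseteq S(\F)$. With these two routine insertions your proof is complete.
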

We define now the main object of our treatment.
\begin{Definition}
A \textbf{singular foliation} of $S(k)$ is a rank $l$ coherent subsheaf $\F$ of $\T_{S(k)}$, such
that:
\begin{itemize}
\item for every $x\in S$ we have that $[\F_x,\F_x]\subseteq \F_x$ (where the bracket is the one
defined in Lemma
\ref{Lemma:Logaritmici});
\item the restriction of $\F$ to $S$, denoted by $\F|_S$, is a rank $l$ singular foliation of $S$.
\end{itemize}
\end{Definition}
\begin{Definition}
Let $\F$ be a singular holomorphic foliation of $S(k)$. We set $\N_{\F}=\T_{S(k)}/\F$ and we denote
by
$S(\F):=\Sing(\N_{\F})$ the \textbf{singular set of the foliation}\index{foliation of $k$-th
infinitesimal neighborhood!singular set}. 
\end{Definition}
\begin{Definition}
Let $\F$ be a singular foliation of $S(k)$. We say $\F$ is \textbf{reduced}
\index{foliation of $k$-th infinitesimal neighborhood!reduced} if it is full in $\T_{S(k)}$,
i.e., for any open set $U$ in $S$ we have that
\[\Gamma(U,\T_{S(k)})\cap\Gamma(U\setminus S(\F),\F)=\Gamma(U,\F).\]
\end{Definition}
\begin{Lemma}
Let $S$ be a submanifold of $M$ and let $\F$ be a singular foliation of $S(k)$; then there exists a
canonical way to associate to it a reduced singular foliation of $S(k)$.
\end{Lemma}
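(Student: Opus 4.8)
The plan is to produce $\overline{\F}$ by saturating $\F$ away from its singular set, and then to verify that this canonical operation lands back in the class of singular foliations and yields a reduced one. Write $j\colon S\setminus S(\F)\hookrightarrow S$ for the inclusion of the complement of $S(\F)=\Sing(\N_\F)$, and consider the restriction morphism $\N_\F\to j_*j^*\N_\F$ induced by $0\to\F\to\T_{S(k)}\to\N_\F\to 0$. I would set $\overline{\F}$ to be the preimage in $\T_{S(k)}$ of $\mathcal{K}:=\ker(\N_\F\to j_*j^*\N_\F)$. Unwinding the definition, for every open $U\subseteq S$ one gets
\[\overline{\F}(U)=\Gamma(U,\T_{S(k)})\cap\Gamma(U\setminus S(\F),\F),\]
so that $\F\subseteq\overline{\F}$ and the two sheaves coincide over $S\setminus S(\F)$.

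The coherence of $\overline{\F}$ is the step I expect to be the main obstacle. The sheaf $\mathcal{K}$ is precisely the subsheaf of germs of $\N_\F$ whose support is contained in the analytic set $S(\F)$: a germ dies under restriction to the complement exactly when it is annihilated by a power of the ideal of $S(\F)$. By the classical coherence theorem for the subsheaf of sections supported on a closed analytic subset, $\mathcal{K}$ is a coherent $\Ol_{S(k)}$-submodule of $\N_\F$, and hence its preimage $\overline{\F}$ under the morphism $\T_{S(k)}\to\N_\F$ is coherent. The care needed here is that $\Ol_{S(k)}$ is non-reduced and $\T_{S(k)}$ is not locally free over it, so I would phrase the whole saturation through the coherent quotient $\N_\F$ — where the classical results apply verbatim — rather than through a torsion submodule of $\T_{S(k)}$.

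It then remains to check the axioms of a singular foliation of $S(k)$ together with reducedness. Since $S(\F)$ is a nowhere dense analytic subset and $\overline{\F}$ agrees with the rank $l$ sheaf $\F$ on the dense open $S\setminus S(\F)$, the saturation $\overline{\F}$ is again of generic rank $l$. For involutivity, take germs $u,v\in\overline{\F}_x$ and represent them on a neighbourhood $U$ of $x$; the bracket $[u,v]$ of Lemma \ref{Lemma:Logaritmici} is a section of $\T_{S(k)}$ whose restriction to $U\setminus S(\F)$ is a bracket of sections of $\F$ and therefore lies in $\F$, so by the defining formula $[u,v]\in\overline{\F}(U)$ and $[\overline{\F}_x,\overline{\F}_x]\subseteq\overline{\F}_x$. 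Restricting to $S$, the inclusion $\F\subseteq\overline{\F}$ gives $\F|_S\subseteq\overline{\F}|_S$, two involutive coherent subsheaves of $\T_S$ of the same rank $l$; hence $\overline{\F}|_S$ is a rank $l$ singular foliation of $S$ and $\overline{\F}$ is a singular foliation of $S(k)$.

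Finally, because $\overline{\F}$ and $\F$ coincide over $S\setminus S(\F)$, their quotients agree there and $\N_{\overline{\F}}$ is locally free on that open set, so $S(\overline{\F})\subseteq S(\F)$. Reducedness of $\overline{\F}$ is then purely formal: if $v\in\Gamma(U,\T_{S(k)})$ restricts to a section of $\overline{\F}$ on $U\setminus S(\overline{\F})$, then since $(U\setminus S(\overline{\F}))\setminus S(\F)=U\setminus S(\F)$ its restriction to $U\setminus S(\F)$ already lands in $\F$, whence $v\in\overline{\F}(U)$. The assignment $\F\mapsto\overline{\F}$ uses only the canonical data of $\F\subseteq\T_{S(k)}$ and its singular set, with no choice of atlas or splitting, so it is canonical; this is the infinitesimal-neighbourhood analogue of the reduction behind Proposition \ref{Prop:Reduced}.
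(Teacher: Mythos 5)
Your construction is correct, but it takes a genuinely different route from the paper's. You saturate \emph{intrinsically} over the ringed space $(S,\Ol_{S(k)})$: $\overline{\F}$ is the preimage in $\T_{S(k)}$ of the subsheaf of $\N_{\F}$ supported on $S(\F)$, so that $\overline{\F}(U)=\Gamma(U,\T_{S(k)})\cap\Gamma(U\setminus S(\F),\F)$, coherence comes from the gap-sheaf theorem (coherence of the subsheaf of sections supported on an analytic set, equivalently the strong Noetherian property applied to the ascending chain of annihilators of powers of the ideal of $S(\F)$ in $\N_{\F}$), and then involutivity, $S(\overline{\F})\subseteq S(\F)$, and fullness are essentially formal. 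The paper instead works \emph{extrinsically} in the ambient manifold: it extends local generators of $\F$ to logarithmic vector fields on open sets $U_{\alpha}\subseteq M$, saturates the resulting distribution $\D_{\alpha}$ by the double annihilator $((\D_{\alpha})^a)^a$ inside $\T_M|_{U_{\alpha}}$ --- where the classical reduction theory on the reduced manifold $M$ applies directly --- and restricts the saturated generators $\tilde{w}_{i,\alpha}$ back to $S(k)$; the real work there is checking, via the meromorphic inverse of the transition matrix, that the restricted sections still generate $\F$ off the singular set and remain involutive. What your route buys: it is manifestly canonical and global (no atlas, and no gluing question for the locally defined $\tilde{\D}_{\alpha}$'s), and reducedness is built into the definition, a point the paper's proof leaves largely implicit. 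What the paper's route buys: explicit local \emph{logarithmic ambient extensions} $\tilde{w}_{i,\alpha}$ of the reduced foliation, whose singular sets have codimension at least two (Remark \ref{Rem:Codimension} via Proposition \ref{Prop:Reduced}); this byproduct is exactly what is used later, e.g.\ in the residue computation of Section \ref{SecCompResTang}, and your intrinsic saturation does not directly produce it. The one step you rightly flag as delicate --- invoking the classical coherence theorem over the non-reduced space $(S,\Ol_{S(k)})$ --- is sound: gap-sheaf coherence holds for coherent sheaves on arbitrary complex spaces, and routing the argument through the coherent quotient $\N_{\F}$ rather than a torsion subsheaf of $\T_{S(k)}$ is the correct way to apply it.
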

\begin{proof}
We cover now a neighborhood of $S$ by open sets $\{U_{\alpha}\}$ such that $\F_{U_{\alpha}\cap S}$
is generated by $v_{1,\alpha},\ldots, v_{l,\alpha}$ and on each $U_{\alpha}$ we can extend
the $v_{i,\alpha}$ to logarithmic vector fields $\tilde{v}_{i,\alpha}$ on $U_{\alpha}$.
On $U_{\alpha}$ the $\tilde{v}_{i,\alpha}$ define a distribution with sheaf of sections
$\D_{\alpha}$; please remark that this is a sheaf on $U_{\alpha}$, not on the whole $M$.
We define $\N_{\D_{\alpha}}=\T_M|_{U_{\alpha}}/\D_{\alpha}$ and denote by $S(\D_{\alpha})$ the set
of singularity of $\N_{\D_{\alpha}}$. 
In general, this distribution may not be reduced, i.e. $\Gamma(U_{\alpha},\T_M)\cap
\Gamma(U_{\alpha}\setminus S(D_{\alpha}),\D_{\alpha})\neq\Gamma(U_{\alpha},\D_{\alpha})$.
We take now the annihilator $(\D_{\alpha})^a=\{\omega\in\Omega_M\mid \omega(v)=0 \textrm{ for
every } v\in \D_{\alpha}\}$.
If we take its annihilator
$\tilde{\D}_{\alpha}:=((\D_{\alpha})^a)^a=\{w\in\T_M\mid \omega(w)=0 \textrm{ for
every } \omega\in (\D_{\alpha})^a\}$ we get now 
a reduced sheaf of sections of the distribution, generated by sections
$\tilde{w}_{1,\alpha},\ldots,\tilde{w}_{l,\alpha}$; we can take the same $l$ because, since we are
dealing with coherent sheaves, the rank is constant outside the singularity set.

Since $\Gamma(U_{\alpha},\D_{\alpha})\subset\Gamma(U_{\alpha},\tilde{\D}_{\alpha})$ we have that,
$\tilde{v}_{i,\alpha}=(h_{\alpha})_i^j\tilde{w}_{j,\alpha}$, where $(h_{\alpha})_i^j$ is a matrix
of holomorphic functions that \textbf{may} be singular on a subset of $U_{\alpha}$ of codimension
smaller than $2$, contained in $S(\D_{\alpha})$. Remark also that $S(\F)\subset S(\D_{\alpha})$ and
that the $\tilde{w}_{i,\alpha}$ are logarithmic vector fields. 

We want to check now that $\tilde{\D}_{\alpha}\otimes \Ol_{S(k)}|_{(U_{\alpha}\cap S)\setminus
S(\F)}$ generates $\F$ and is involutive.
We will denote the restriction of $\tilde{w}_{i,\alpha}$ to the $k$-th infinitesimal neighborhood
by $w_{i,\alpha}$.
Indeed, outside the singularity set, the matrix $(h_{\alpha})_i^j$ is invertible as a matrix of
holomorphic functions, with inverse $(g_{\alpha})^i_j$ which implies that the $w_{i,\alpha}$'s
generate $\F$.
We check the involutivity:
\begin{align*}
[ \tilde{w}_{i,\alpha},\tilde{w}_{i',\alpha}]\otimes[1]_{k+1}=&[(g_{\alpha})^j_i
\tilde{v}_{j,\alpha},(g_{\alpha})^{j'}_{i'}
\tilde{v}_{j',\alpha}]\otimes[1]_{k+1} \\
=&
[(g_{\alpha})^j_i]_{k+1}v_{j,\alpha}([(g_{\alpha})^{j'}_{i'})]_{k+1})v_{j',\alpha}\\&-[(g_{\alpha})^
{ j' } _{i'}]_{k+1}
v_{j'\alpha}([(g_{\alpha})^j_i]_{k+1})v_{j,\alpha}\\&+[(g_{\alpha})_{i'}^{j'}]_{k+1}[(g_{\alpha}
)^j_i ] _ { k+1 } [v_{j,\alpha},v_{j',\alpha}].
\end{align*}
Remark that $(g_{\alpha})^j_i$ is a matrix of meromorphic functions on $U_{\alpha}$ (this follows
from the Cramer rule for the inverse of a matrix),
and its inverse is a matrix of holomorphic functions.
Now, for each $v_{j,\alpha}$ we have that 
\[v_{j,\alpha}([(g_{\alpha})^{j'}_{i'}]_{k+1})=-[(g_{\alpha})_{i'}^{j''}]_{k+1}
v_{j,\alpha}([(h_{\alpha})^{i''}_{j''}]_{k+1})[(g_{\alpha})^{j'}_{i''}]_{k+1},\]
and therefore:
\begin{align*}
[(g_{\alpha})^j_i]_{k+1}v_{j,\alpha}&([(g_{\alpha})^{j'}_{i'})]_{k+1})v_{j',\alpha}\\
=&-[(g_{\alpha})^j_i]_{k+1}[(g_{\alpha})_{i'}^{j''}]_{k+1}
v_{j,\alpha}([(h_{\alpha})^{i''}_{j''}]_{k+1})[(g_{\alpha})^{j'}_{i''}]_{k+1}v_{j',\alpha}\\
=&-[(g_{\alpha})_{i'}^{j''}]_{k+1} w_{i,\alpha}([(h_{\alpha})^{i''}_{j''}]_{k+1})w_{i'',\alpha}.
\end{align*}
A similar reasoning holds for the second summand in the involutivity check. If we denote by
$[a_{j,j'}^{j''}]_{k+1}$ the elements of $\Ol_{S(k)}$ such that 
\[ [v_{j,\alpha},v_{j',\alpha}]=[a_{j,j'}^{j''}]_{k+1} v_{j'',\alpha}\]
we have that:
\begin{align*} 
[(g_{\alpha})_{i'}^{j'}]_{k+1}[(g_{\alpha})^j_i]_{ k+1 }& [v_{j,\alpha},v_{j',\alpha}]\\
=&-[(g_{\alpha})_{i'}^{j'}]_{k+1}[(g_{\alpha})^j_i]_{ k+1 } [a_{j,j'}^{j''}]_{k+1}v_{j'',\alpha}\\
=&-[(g_{\alpha})_{i'}^{j'}]_{k+1}[(g_{\alpha})^j_i]_{ k+1 }
[a_{j,j'}^{j''}]_{k+1}[(h_{\alpha})_{j''}^{i''}]_{k+1} w_{i'',\alpha}.
\end{align*}
The point these computations prove is that
$[\tilde{w}_{i,\alpha},\tilde{w}_{i',\alpha}]\otimes[1]_{k+1}$ belongs to the module generated by
the $w_{i,\alpha}$'s over the meromorphic functions. But, a priori, we know that this bracket is a
holomorphic section of $\T_{S(k)}$ and therefore it belongs to the $\Ol_{S(k)}$-module generated by
the $w_{i,\alpha}$'s.
\end{proof}
\begin{Remark}\label{Rem:Codimension}
By the proof of the Lemma above and by \ref{Prop:Reduced} we have an important consequence: each
one of the extensions $\tilde{w}_{i,\alpha}$ has a singularity set of codimension at least $2$.
\end{Remark}

\section{Localization of Chern classes}\label{SectionVanishing}
In this section we give a short account of the theory that permits us to compute residues.
Thanks to Theorem 6.1 in \cite{ABTHolMapFol}, an extended version of Bott's vanishing theorem, we know
that the existence of a partial holomorphic connection along a non involutive subbundle gives rise to vanishing of the
characteristic classes of the bundle endowed with the connection.
\begin{Theorem}[\cite{ABTHolMapFol}, Thm. 6.1]\label{BottVanThe}
Let $S$ be a complex manifold, $F$ a sub-bundle of $TS$ of rank $l$ , and $E$
a complex vector bundle on $S$. Assume we have a partial holomorphic connection on $E$
along $F$ . Then:
\begin{itemize}
 \item every symmetric polynomial in the Chern classes of $E$ of degree larger than $\dim S-
         l+\lfloor l/2\rfloor$ vanishes (where $\lfloor x\rfloor$ is the largest integer less than or equal to $x$).
\item Furthermore, if $F$ is involutive and the partial holomorphic connection is flat then
every symmetric polynomial in the Chern classes of E of degree larger than $\dim S- l$
  vanishes.
\end{itemize} 
\end{Theorem}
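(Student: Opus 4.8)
The plan is to prove the statement by Chern--Weil theory, choosing a connection on $E$ adapted to the given partial holomorphic connection along $F$ so that its curvature has a restricted type with respect to the splitting of the holomorphic cotangent bundle into directions along and transverse to $F$. Write $n=\dim S$ and let $q=n-l$ be the rank of the conormal bundle $N^{*}=\mathrm{Ann}(F)\subset\Omega^{1}_{S}$. Recall that for a homogeneous symmetric polynomial $\phi$ of degree $k$ in the Chern roots, $\phi$ evaluated on the curvature of \emph{any} connection is a closed $2k$-form representing the corresponding polynomial in the Chern classes of $E$; hence it suffices to exhibit a single connection whose Chern--Weil form $\phi(\Theta)$ vanishes identically as a form when $k$ exceeds the stated bound.

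First I would extend the partial holomorphic connection. Fixing a Hermitian metric on $E$ and a splitting $\Omega^{1}_{S}=F^{*}\oplus N^{*}$, I build a connection $D$ on $E$ (as a $C^\infty$ bundle) that agrees with the given partial connection $\nabla$ in the directions of $F$; a partition of unity glues the local choices, and along $F$ the prescription is forced by $\nabla$. The heart of the argument is then the computation of the curvature $\Theta=D^{2}$ and the verification that, because $D$ restricts to the holomorphic $\nabla$ along $F$, one may arrange
\[
\Theta=\Theta_{F}+\Theta_{\mathrm{tr}},
\]
where $\Theta_{F}\in\Gamma(\Lambda^{2}F^{*}\otimes\mathrm{End}\,E)$ is the (antisymmetric) obstruction built from the non-involutivity tensor $\Lambda^{2}F\to TS/F$ of $F$ together with the failure of $\nabla$ to be flat, and $\Theta_{\mathrm{tr}}$ is the remaining part, all of whose holomorphic legs lie in the conormal bundle $N^{*}$.

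Granting this normal form, both bounds follow from a bidegree count. A monomial occurring in $\phi(\Theta)$ is a wedge-trace product of $k$ curvature factors; suppose it uses $j$ factors of type $\Theta_{F}$ and $k-j$ of type $\Theta_{\mathrm{tr}}$. The holomorphic legs contributed by the $\Theta_{F}$ factors lie in $\Lambda^{2j}F^{*}$, which vanishes as soon as $2j>l$, so $j\le\lfloor l/2\rfloor$; the holomorphic legs contributed by the $\Theta_{\mathrm{tr}}$ factors lie in the rank-$q$ bundle $N^{*}$, hence there can be at most $q$ of them, forcing $k-j\le q$. Thus a nonzero monomial requires $k\le q+\lfloor l/2\rfloor=\dim S-l+\lfloor l/2\rfloor$, which is the first bullet. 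When $F$ is involutive and $\nabla$ is flat the obstruction $\Theta_{F}$ vanishes identically (the Jacobi-identity computation of Proposition \ref{PropHolConnection} gives flatness along $F$, and involutivity closes the bracket), so $j=0$ is forced and the count collapses to $k\le q=\dim S-l$, which is the second bullet.

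The main obstacle is the construction of the adapted connection together with the proof that its curvature admits the normal form $\Theta=\Theta_{F}+\Theta_{\mathrm{tr}}$ with $\Theta_{F}$ genuinely valued in $\Lambda^{2}F^{*}$: this is where holomorphicity of the partial connection and the antisymmetry of the non-involutivity tensor must be used carefully, and it is exactly this refinement that upgrades the classical involutive bound $\dim S-l$ to $\dim S-l+\lfloor l/2\rfloor$ in the general case. Once the normal form is in hand the remaining steps---closedness of $\phi(\Theta)$, its independence of the connection in cohomology, and the elementary dimension count in $\Lambda^{*}F^{*}$ and $\Lambda^{*}N^{*}$---are routine.
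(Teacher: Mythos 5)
First, a point of order: the paper does not prove this statement at all — it is imported verbatim as Theorem~6.1 of \cite{ABTHolMapFol} and used as a black box in the localization arguments of Sections~\ref{SectionVanishing} and~\ref{IndFolia}. So the only meaningful comparison is with the proof in that reference, and your strategy is indeed the same one: extend the partial holomorphic connection to a genuine connection on $E$, control the curvature relative to a smooth splitting $TS=F\oplus F^{\perp}$, and derive both bounds from a leg-counting argument in which at most $\lfloor l/2\rfloor$ factors can come from $\Lambda^{2}F^{*}$ and at most $q=\dim S-l$ factors can carry conormal legs. That mechanism is exactly what produces the refinement $\dim S-l+\lfloor l/2\rfloor$ and its collapse to $\dim S-l$ in the flat involutive case.

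However, the pivotal step — your curvature normal form — is both over-claimed and under-justified, and as literally written the count does not close. \emph{(a)} You cannot arrange that \emph{all} holomorphic legs of $\Theta_{\mathrm{tr}}$ lie in $N^{*}$: for $u\in F$ and $v$ in the chosen complement, nothing in the construction kills the mixed $(2,0)$ components $\Theta(u,v)$, so $\Theta-\Theta_{F}$ genuinely contains $F^{*}\wedge N^{*}$ terms. What is true, and all the argument needs, is weaker: $\Theta-\Theta_{F}$ lies in the ideal generated by $N^{*}$, i.e.\ every term has \emph{at least one} leg in $N^{*}$; a wedge of more than $q$ such factors then vanishes, which is the correct replacement for your ``at most $q$ holomorphic legs'' inference. \emph{(b)} Even for that you must rule out a $(0,2)$ component of $\Theta$: a $(0,2)$ factor has no holomorphic legs, vacuously satisfies your stated normal form, and destroys the implication $k-j\le q$. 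This is repaired by building the extension $D$ with $D^{0,1}=\bar{\partial}_{E}$ (glue Chern-type local extensions by a partition of unity; convex combinations preserve both this and the prescription along $F$), which gives $\Theta^{0,2}=0$. Then holomorphicity of the partial connection gives $\Theta(u,\bar{w})=0$ for $u\in F$ and $\bar{w}$ of type $(0,1)$ (test on holomorphic $u$ and a holomorphic frame $s$: $\nabla_{u}s$ is holomorphic and $[u,\bar{w}]$ is again of type $(0,1)$), and tensoriality plus antisymmetry place $\Theta|_{F\times F}$ in $\Lambda^{2}F^{*}\otimes\mathrm{End}(E)$. These three facts yield the weaker normal form, after which your counting — including the flat involutive case, where $\Theta_{F}=\tilde{X}_{u}\tilde{X}_{v}-\tilde{X}_{v}\tilde{X}_{u}-\tilde{X}_{[u,v]}=0$ — goes through essentially verbatim.
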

We give a sketch of the localization process we use, following \cite[pag. 194]{Suwa}.
Indeed, let $M$ be a complex manifold and let $S$ be a compact complex submanifold of dimension $n$.
Suppose we have an involutive coherent subsheaf of $\T_{S(1)}$ of rank $l$. Let $\Sigma:=\Sing(\T_{S(1)}/\F)$
and suppose $\F|_{S\setminus\Sigma}$ is a foliation of $S$ of rank $l$. Then, we have a foliation of the first
infinitesimal neighborhood outside an algebraic subset $\Sigma$. From our discussion in Section \ref{SecAtiyah} we know
we have a flat partial holomorphic connection on $\N_{\F,M}$ along $\F|_S$ on $S\setminus\Sigma$;
therefore the characteristic classes obtained evaluating a symmetric polynomial of degree $n-k$ larger than $n-l$ on the
Chern classes of $\N_{\F,M}$ vanish when restricted to $S\setminus\Sigma$. Let now $\eta$ be
such a characteristic class, obtained from a symmetric
polynomial of degree $n-k$; if we inspect the long exact sequence for the cohomology of the pair $(S,S\setminus \Sigma)$
\[\xymatrix{\ldots \ar[r]
&H^{2(n-k)}(S,S\setminus\Sigma) \ar[r] &H^{2(n-k)}(S) \ar[r] &H^{2(n-k)}(S\setminus\Sigma) \ldots},\]
we can notice that, since $\eta$ vanishes on $S\setminus\Sigma$ we can lift it to a class in
$H^{2(n-k)}(S,S\setminus\Sigma)$; please note this lift depends on the partial holomorphic
connection and therefore we denote it by $(\eta,\nabla)$. Since $S$ is compact we now can apply
Poincar\'{e} duality $P_S$ and Alexander duality
$A_{\Sigma}$ obtaining the following commutative diagram:
\[\xymatrix{&H^{2(n-k)}(S,S\setminus\Sigma) \ar[r]\ar[d]^{A_{\Sigma}} &H^{2(n-k)}(S)\ar[d]^{P_S}\\
&H_{2k}(\Sigma)\ar[r]^\iota &H_{2k}(S).}\]
Since $\Sigma$ is the union of its connected components $\Sigma_{\alpha}$, its homology is the direct sum of the
homologies of each connected component; the Alexander duality is compatible with this decomposition.
Therefore we obtain the following residue formula:
\[\sum_{\alpha} \iota_{\alpha}(A_{\Sigma_{\alpha}})((\eta,\nabla))=P_S(\eta).\]
In case $k=0$ what we are doing is to associate to each connected component $\Sigma_{\alpha}$ a number and the sum
of all these numbers is $P_S(\eta)$.
We want now to understand how we compute the residue of a characteristic class arising from a
polynomial of degree $n$; we will use the tool of \u{C}ech-deRham cohomology (refer to
\cite{Suwa}), before proceeding we need to cite some results and define some notation.
\begin{Remark}
Please note that the Bott Vanishing Theorem in this form not only tells us that the de Rham
cohomology class of a Chern
class vanishes, but also that the form representing it vanishes.
\end{Remark}
We refer to \cite[pag. 71]{Suwa} for the theory regarding virtual bundles. Before progressing we
need a couple of definitions.
\begin{Definition}
A \textbf{virtual bundle} is an element in the $K$-group $K(M)$ of
$M$ \cite{Milnor},\cite{Suwa}.
In particular, if we have complex vector bundles $E_i$ with $i=0,\ldots,q$ over a smooth manifold
$M$ we may consider the virtual bundle $\xi=\sum_{i=0}^q (-1)^i E_i$.
\end{Definition}
\begin{Definition}\label{DefinitionCompatibleWithSequence}
Let
\begin{equation}\label{virtualbundles}
\xymatrix{&0\ar[r] &E_q\ar[r]^{\psi_q}\ar[r]&\ldots\ar[r]&E_1\ar[r]^{\psi_1} &E_0\to 0,}
\end{equation}
be a sequence of vector bundles on $M$, and for each $i=0,\ldots, q$, let $\nabla^{(i)}$ be a
connection for $E_i$. We
say that the family $\nabla^{(q)},\ldots,\nabla^{(0)}$ is \textbf{compatible with the sequence} if,
for each $i$, the
following diagram commutes:
\[\xymatrix{ 
A^0(M,E_i)     \ar[r]^{\nabla^{(i)}}\ar[d]_{\psi_i} &A^1(M,E_i)\ar[d]^{1\otimes \psi_i} \\
A^0(M,E_{i-1}) \ar[r]^{\nabla^{(i-1)}} &A^1(M,E_{i-1}).}\]
\end{Definition}
\begin{Remark}
The following Proposition from the book of Suwa is really general in scope, dealing with virtual
bundles and
\u{C}ech-de Rham cohomology.

We refer to \cite[p. 164]{Milnor} for the proof of the formulas which express the Chern classes of
$E\oplus F$ as products and sums of the Chern classes of $E$ and $F$.
In general for a direct sum of vector bundles $E\oplus F$, the total Chern
class $\textrm{c}(E\oplus F)=\textrm{c}(E)\smile\textrm{c}(F)$.
This permits us to compute the Chern classes of $E\oplus F$ as polynomials in the Chern classes of
$E$ and $F$; the simplest example is the first Chern class of the bundle $E\oplus F$: we have that
$c_1(E\oplus F)=c_1(E)+c_1(F)$.

A really interesting fact about virtual bundles is that Chern classes behave naturally with respect
to them, generalizing the discussion about the Chern class of a direct sum above. In general, if
$\phi$ is a symmetric polynomial, $\xi:=\sum_{i=0}^q E_i$ is a virtual bundle
and ${\nabla}^{\bullet}$ denotes a family of connections $\nabla^{(q)},\ldots,\nabla^{(1)}$ for
$\xi$ then we can express $\phi(\xi)$ as a finite sum
\[\phi(\xi)=\sum_l\phi_l^{(0)}(E_0)\wedge\ldots\wedge\phi_l^{(q)}(E_q),\]
where $\phi_l^{(i)}(E_i)$ are polynomials in the Chern classes of $E_i$ for each $i$ and $l$.
Then
\[\phi({\nabla}^{\bullet})=\sum_l
\phi_l^{(0)}(\nabla^{(0)})\wedge\ldots\wedge\phi_l^{(q)}(\nabla^{(q)})\]
represents the cohomology class of $\phi(\xi)$.

In \cite{Suwa} it is proved that there exists a form
$\phi({\nabla}^{\bullet}_0,\ldots,{\nabla}^{\bullet}_p)$, called the \textbf{Bott difference
form}
such that 
\[\sum_{\nu=0}^p\phi({\nabla}^{\bullet}_0,\ldots,\widehat{\nabla}^{\bullet}_\nu,\ldots,{\nabla}^{
\bullet } _p)+(-1)^p d\phi({ \nabla } ^ { \bullet}_0,\ldots,{\nabla}^{\bullet}_p)=0,\]
where the hat means the hatted family is not taken into consideration.
\end{Remark}
\begin{Proposition}[\cite{Suwa}, p. 73]\label{Prop:CompatibleSequence}
Suppose sequence \eqref{virtualbundles} is exact. Let $\phi$ be a symmetric polynomial and
$\nabla^{\bullet}_k=\{\nabla^{(q)}_k,\ldots,\nabla^{(0)}_k\}$ for $k=0,\ldots, p$ families of
connections compatible with
\eqref{virtualbundles} for the virtual bundle
$\tilde{\xi}=\sum_{i=1}^q(-1)^{i-1}E_i$.
Then:
\[\phi(\tilde{\nabla}^{\bullet}_0,\ldots,\tilde{\nabla}^{\bullet}_p)=\phi(\nabla^{(0)}_0,\ldots,
\nabla^{(0)}
_p)\]
Similarly for other ``partitions'' of the virtual bundle $\xi$. In particular
\[\phi(\tilde{\xi})=\phi(E_0),\]
\end{Proposition}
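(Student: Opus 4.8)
The plan is to reduce the statement to two ingredients: the behaviour of characteristic forms of a single compatible family, and the simplicial (integration-over-the-simplex) definition of the difference form for a virtual bundle. Recall that for a single bundle equipped with $p+1$ connections $\nabla_0,\ldots,\nabla_p$, the difference form $\phi(\nabla_0,\ldots,\nabla_p)$ is obtained by forming the barycentric combination $\sum_k t_k\nabla_k$ on $M\times\Delta^p$, with $(t_0,\ldots,t_p)$ the barycentric coordinates on the standard $p$-simplex, and integrating $\phi(\sum_k t_k\nabla_k)$ along the fibre $\Delta^p$; the coboundary relation quoted just before the statement is exactly the Stokes identity for this fibre integration. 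Applying the same construction simultaneously to all the $E_i$ defines $\phi(\tilde{\nabla}^{\bullet}_0,\ldots,\tilde{\nabla}^{\bullet}_p)$ for the virtual bundle $\tilde{\xi}$.

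First I would establish the single-family case ($p=0$): whenever $\nabla^{(q)},\ldots,\nabla^{(0)}$ is compatible with the exact sequence \eqref{virtualbundles}, one has the \emph{form-level} identity $\phi(\nabla^{\bullet})=\phi(\nabla^{(0)})$. Compatibility means precisely that each $\psi_i$ is parallel, so $Z_i:=\ker\psi_i=\operatorname{im}\psi_{i+1}$ is a $\nabla^{(i)}$-parallel subbundle, and $\psi_i$ identifies the quotient connection induced on $E_i/Z_i$ with the restriction of $\nabla^{(i-1)}$ to $Z_{i-1}=\operatorname{im}\psi_i$. Choosing $C^{\infty}$ splittings, each $\nabla^{(i)}$ is block upper-triangular for $0\to Z_i\to E_i\to Z_{i-1}\to 0$, hence so is its curvature; since any invariant polynomial of a block-triangular matrix depends only on the diagonal blocks, the total Chern form is multiplicative, $c(\nabla^{(i)})=c(\nabla^{(i)}|_{Z_i})\smile c(\nabla^{(i-1)}|_{Z_{i-1}})$. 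The product $\prod_{i=1}^{q}c(\nabla^{(i)})^{(-1)^{i-1}}$, which represents $c(\tilde{\xi})$, then telescopes at the level of forms to $c(\nabla^{(0)})$, using $Z_0=E_0$ and $Z_q=0$ from exactness at the two ends. Writing an arbitrary $\phi$ through Chern forms gives $\phi(\nabla^{\bullet})=\phi(\nabla^{(0)})$, which is already the assertion $\phi(\tilde{\xi})=\phi(E_0)$.

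For the general difference form I would next observe that compatibility is stable under barycentric combination: the defining condition $(1\otimes\psi_i)\circ\nabla^{(i)}=\nabla^{(i-1)}\circ\psi_i$ is affine-linear in the connection, and since $\sum_k t_k=1$ the family $\{\sum_k t_k\nabla^{(i)}_k\}_i$ on $M\times\Delta^p$ is again compatible with the pulled-back exact sequence. Applying the single-family identity on $M\times\Delta^p$ to this barycentric family yields $\phi(\{\sum_k t_k\nabla^{(i)}_k\}_i)=\phi(\sum_k t_k\nabla^{(0)}_k)$ as forms, and integrating both sides over $\Delta^p$ gives exactly
\[\phi(\tilde{\nabla}^{\bullet}_0,\ldots,\tilde{\nabla}^{\bullet}_p)=\phi(\nabla^{(0)}_0,\ldots,\nabla^{(0)}_p).\]
The ``other partitions'' are handled identically, by grouping the $E_i$ into the two sides of each short exact sequence differently before telescoping.

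The main obstacle I expect is the bookkeeping in the single-family step: one must check that the quotient connection induced on $Z_{i-1}$ from level $i$ genuinely coincides with the restriction of $\nabla^{(i-1)}$ coming from level $i-1$, so that the telescoping product collapses cleanly instead of leaving uncontrolled off-diagonal curvature contributions. That coincidence is precisely the content of the compatibility square at index $i$, but verifying it---together with the existence of the parallel subbundles $Z_i$ over the base and over $M\times\Delta^p$---is where the real work lies; the barycentric and Stokes-type manipulations are then purely formal.
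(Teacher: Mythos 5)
The paper itself contains no proof of this Proposition: it is imported from Suwa's book (cited as [Su, p.~73]) and stated without argument, so there is no in-paper proof to compare against. Your proposal is correct and is essentially the standard argument from the cited reference: compatibility makes each $\ker\psi_i$ a parallel subbundle and identifies the quotient connection on $E_i/\ker\psi_i$ with the restriction of $\nabla^{(i-1)}$ to $\operatorname{im}\psi_i$, so the total Chern forms multiply at the form level and the alternating product telescopes to $c(\nabla^{(0)})$; your further observation that compatibility, being an affine condition, survives barycentric combination on $M\times\Delta^p$ is exactly what reduces the difference-form statement to the single-family case by fibre integration, matching how the paper itself manipulates Bott difference forms in its residue computation.
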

We state now Bott vanishing theorem in the version for virtual bundles.
\begin{Theorem}[Bott Vanishing theorem, \cite{Suwa} pag.
76]\label{BottVanishingTheoremforVirtualBundles}
Let $M$ be a complex manifold of dimension $n$ and $F$ an involutive subbundle of rank $p$ of $TM$.
Also, for each
$i=0,\ldots, q$ let $E_i$ be a bundle and let $\nabla^{(i)}_1,\ldots,\nabla^{(i)}_k$ be partial
holomorphic connections for $E_i$ along $F$, then, for any
homogeneous
symmetric polynomial $\phi$ of degree $d> n-p$ we have
\[\phi(\nabla^{\bullet}_1,\ldots,\nabla^{\bullet}_k)\equiv 0,\]
where $\nabla^{\bullet}_j=(\nabla^{(q)}_j,\ldots,\nabla^{(0)}_j)$, for $j=1,\ldots,k$.
\end{Theorem}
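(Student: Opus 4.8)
The statement is the Bott vanishing theorem for difference forms of virtual bundles, and the plan is to follow the argument of \cite{Suwa}: reduce the vanishing to a purely local computation of curvature, and then conclude by a dimension count on the number of transverse holomorphic differentials that can occur. Both the difference-form structure and the passage to the virtual bundle $\xi=\sum_i(-1)^iE_i$ are handled by the formalism recalled above in Proposition \ref{Prop:CompatibleSequence}: $\phi$ of the virtual bundle expands as a finite sum of wedge products of Chern forms of the individual $E_i$, so it suffices to control each such product at the level of forms.

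The heart is a local lemma for a single flat partial holomorphic connection along $F$. Since $F$ is an involutive holomorphic subbundle of rank $p$, the Holomorphic Frobenius theorem provides local coordinates $(z^1,\ldots,z^n)$ in which $F$ is generated by $\partial/\partial z^1,\ldots,\partial/\partial z^p$, so that the annihilator of $F$ is generated by the $n-p$ transverse holomorphic $1$-forms $dz^{p+1},\ldots,dz^n$. Given a partial holomorphic connection on a bundle $E$ along $F$, I would extend it to the full connection of type $(1,0)$ whose leaf coefficients are the holomorphic data $a_i=\nabla_{\partial/\partial z^i}$, $i\le p$, of the partial connection, leaving the transverse coefficients free. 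Writing the curvature as $K=\partial\theta+\bar\partial\theta+\theta\wedge\theta$, one checks that the $(1,1)$ leaf terms vanish because the $a_i$ are holomorphic (so $\bar\partial a_i=0$), while the $(2,0)$ leaf-leaf terms $\partial_{z^i}a_j-\partial_{z^j}a_i+[a_i,a_j]$ vanish because the partial connection is flat along the involutive $F$. Every remaining entry of $K$ therefore carries at least one transverse factor $dz^u$ with $u>p$; that is, $K$ lies in the ideal generated by $dz^{p+1},\ldots,dz^n$.

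With this in hand the single-bundle case is a degree count: $\phi(K)$ is homogeneous of degree $d$ in the entries of $K$, hence a sum of wedge products of $d$ two-forms each of which contains a transverse holomorphic differential, so every summand contains at least $d$ factors drawn from $dz^{p+1},\ldots,dz^n$. As these are only $n-p$ anticommuting forms, any such product vanishes identically once $d>n-p$, which is the asserted bound and yields vanishing at the level of forms, not merely in cohomology. To obtain the full statement I would then run the difference-form construction of \cite{Suwa} over the standard simplex $\Delta^{k-1}$: one interpolates the families $\nabla^{\bullet}_1,\ldots,\nabla^{\bullet}_k$, computes $\phi$ of the resulting connection on $M\times\Delta^{k-1}$ expanded through Proposition \ref{Prop:CompatibleSequence}, and integrates over the fibre $\Delta^{k-1}$.

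The main obstacle is precisely this last passage. The interpolated connection on $M\times\Delta^{k-1}$ is in general no longer flat along $F$, so its curvature does not lie in the transverse ideal and the clean count of the single-bundle case does not apply verbatim; the leaf-leaf components reappear and must be shown to be harmless. One must instead track the simplex differentials $dt_j$ together with the transverse ones, and exploit the flatness of the connections at the vertices of $\Delta^{k-1}$, to recover the bound $d>n-p$ after fibre integration. This refined bookkeeping is the content of the argument in \cite{Suwa}, and once it is granted the vanishing follows exactly as in the single-bundle count above.
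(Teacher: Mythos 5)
You should first know that the paper never proves this statement: it is quoted from Suwa's book (page 76) as a black-box tool, so your attempt can only be judged on its own merits. Your single-connection lemma is correct and standard: you rightly read ``partial holomorphic connection'' as \emph{flat} partial holomorphic connection (this is what Suwa's ``holomorphic action'' means, as the paper's own introduction states, even though the theorem as transcribed omits the word), and then, in Frobenius coordinates, a type $(1,0)$ extension has curvature lying in the ideal generated by the $n-p$ transverse forms $dz^{p+1},\ldots,dz^n$, because holomorphicity kills the leaf $(1,1)$ components and flatness kills the leaf--leaf $(2,0)$ components; the degree count does the rest.

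The genuine gap is the multi-connection case, which is precisely what distinguishes this theorem (needed for the \u{C}ech--de Rham residue theory) from the vanishing of a single Chern form, and you defer it entirely to \cite{Suwa} (``this refined bookkeeping is the content of the argument\ldots''). Worse, your diagnosis of the difficulty is wrong, so the plan you sketch could not be completed as stated. The hypothesis you are missing is that, for each $i$, the $k$ connections $\nabla^{(i)}_1,\ldots,\nabla^{(i)}_k$ must restrict to \emph{one and the same} flat partial holomorphic connection $\theta_i$ along $F$ (in Suwa's terms, all are compatible with a single holomorphic action of $F$ on $E_i$). Under this hypothesis nothing ``reappears'': since $\sum_j t_j=1$, the interpolated connection $\sum_j t_j\nabla^{(i)}_j$ on $M\times\Delta^{k-1}$ still restricts to $\theta_i$ along $F$, so its leaf--leaf curvature vanishes identically by the same flatness computation as in the single-connection case, and the mixed $dz^{i'}\wedge dt_j$ components vanish because the leaf parts of all the connection matrices coincide and $\sum_j dt_j=0$ on the simplex; hence the curvature again lies in the transverse ideal, your count applies verbatim on $M\times\Delta^{k-1}$, and fibre integration finishes the proof with no extra bookkeeping. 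Without that hypothesis the statement is simply false, so no bookkeeping ``at the vertices'' can rescue it: take $M=\mathbb{C}$, $F=TM$ (so $n=p=1$ and the claim concerns $\phi=c_1$, of degree $1>n-p=0$), $E_0$ the trivial line bundle, and $\nabla_j=d+a_j\,dz$ for $j=1,2$ with $a_1\neq a_2$ holomorphic; each is a flat partial holomorphic connection along $F$, yet the interpolated curvature contains the term $dt\wedge(a_2-a_1)\,dz$, and integration over the fibre gives
\[
c_1(\nabla_1,\nabla_2)=\frac{1}{2\pi\sqrt{-1}}\,(a_1-a_2)\,dz,
\]
up to sign, which is not identically zero. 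So the step you postponed is not a refinement of the degree count against leaf--leaf terms: it is the identification of the compatibility condition tying the $k$ connections together, and once that condition is made explicit the difficulty you describe disappears.
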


\section{Index theorems for foliations and involutive closures}\label{IndFolia}
Following the work \cite{Suwa} and the articles \cite{ABTHolMap}, \cite{ABTHolMapFol}, we know that
the
existence of a
partial holomorphic connection, thanks to Bott's Vanishing Theorem \ref{BottVanThe}, gives rise to
the vanishing of some
of the Chern classes of a vector bundle and therefore to an index theorem.
In Section \ref{SecAtiyah} we found a concrete realization of the Atiyah sheaf for the
normal bundle
of a foliation as
a quotient of the ambient tangent bundle and we proved that
the Atiyah sequence splits if there exists a foliation of the first infinitesimal neighborhood.
In this section we state the index theorems that follow directly from our treatment.

The simpler case is when we have a foliation of the first infinitesimal neighborhood; then we have a
partial holomorphic
connection on $\N_{\F,M}$ (Corollary \ref{CorollaryHolomorphicAction}) and so, Bott's Vanishing
Theorem (Theorem \ref{BottVanThe}) permits us to prove the following.
\begin{Theorem}\label{Theo:IndexFoliations}
Let $S$ be a codimension $m$ compact submanifold of a $n$ dimensional complex manifold $M$. Let $\F$
be a rank $l$
foliation on $S$, such that it extends to the first infinitesimal neighborhood of $S\setminus
S(\F)$, and
let $S(\F)=\bigcup_{\lambda}\Sigma_{\lambda}$ be the decomposition of $S(\F)$ in
connected components.
Then for every symmetric homogeneous polynomial $\phi$ of degree $k$ larger than $n-m-l$ we can
define the residue
$\Res_{\phi}(\F,\N_{\F,M};\Sigma_{\lambda})\in H_{2(n-m-k)}(\Sigma_{\alpha})$ depending only on the
local behaviour of
$\F$ and $\N_{\F,M}$ near $\Sigma_{\lambda}$ such that:
\[\sum_{\lambda} \Res_{\phi}(\F,\N_{\F,M};\Sigma_{\lambda})=\int_S \phi(\N_{\F,M}),\]
where $\phi(\N_{\F,M})$ is the evaluation of $\phi$ on the Chern classes of $\N_{\F,M}$.
\end{Theorem}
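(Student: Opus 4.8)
The plan is to run the localization scheme sketched in Section \ref{SectionVanishing} with $S$ now of complex dimension $n-m$. First I would invoke Corollary \ref{CorollaryHolomorphicAction}: since $\F$ extends to the first infinitesimal neighborhood of $S\setminus S(\F)$, there is a flat partial holomorphic connection $(\delta,\F)$ on $\N_{\F,M}$ along $\F|_S$ over $S\setminus S(\F)$. Because $\F|_{S\setminus S(\F)}$ is an involutive rank-$l$ subbundle of $\T_S$ and $\delta$ is flat, the flat/involutive branch of Bott's Vanishing Theorem \ref{BottVanThe} applies with $\dim S=n-m$: every symmetric homogeneous $\phi$ of degree $k>\dim S-l=n-m-l$ evaluated on the Chern classes of $\N_{\F,M}$ vanishes on $S\setminus S(\F)$, and --- by the Remark following that theorem --- vanishes \emph{as a form}, not merely in cohomology. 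Thus the degree threshold in the statement is exactly the flat-involutive Bott bound.

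This form-level vanishing is what makes localization possible. Writing $\Sigma:=S(\F)$, I would feed the de~Rham representative $\phi(\nabla)$, identically zero off $\Sigma$, into the long exact sequence of the pair $(S,S\setminus\Sigma)$ to lift $\eta:=\phi(\N_{\F,M})$ to a relative class $(\eta,\nabla)\in H^{2k}(S,S\setminus\Sigma)$; the lift records the chosen connection, whence the notation. Since $S$ is compact of real dimension $2(n-m)$, Poincar\'e duality $P_S$ and Alexander duality $A_\Sigma$ fit into the commutative diagram of Section \ref{SectionVanishing} with the homological degree now $2(n-m-k)$. Setting
\[
\Res_\phi(\F,\N_{\F,M};\Sigma_\lambda):=A_{\Sigma_\lambda}\bigl((\eta,\nabla)\bigr)\in H_{2(n-m-k)}(\Sigma_\lambda)
\]
is legitimate because Alexander duality splits along the decomposition $\Sigma=\bigcup_\lambda\Sigma_\lambda$ into connected components.

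Chasing the diagram then yields
\[
\sum_\lambda \iota_\lambda\bigl(\Res_\phi(\F,\N_{\F,M};\Sigma_\lambda)\bigr)=P_S(\eta),
\]
and reading both sides through $P_S$ gives the residue formula $\sum_\lambda\Res_\phi(\F,\N_{\F,M};\Sigma_\lambda)=\int_S\phi(\N_{\F,M})$ (where, for $k<n-m$, the right-hand side denotes the class $P_S(\eta)$, and a genuine number when $k=n-m$). To see that each residue depends only on the local behaviour of $\F$ and $\N_{\F,M}$ near $\Sigma_\lambda$, I would use the \u{C}ech--de~Rham description: choosing the covering $\{U_0,U_1\}$ with $U_0=S\setminus\Sigma$ and $U_1$ a neighbourhood of $\Sigma$, the relative cocycle is built from $\phi(\nabla_0)\equiv 0$ on $U_0$, an arbitrary $\phi(\nabla_1)$ on $U_1$, and the Bott difference form $\phi(\nabla_0,\nabla_1)$ on the overlap, so the representative is supported on $U_1$ and $A_{\Sigma_\lambda}$ sees only a neighbourhood of $\Sigma_\lambda$.

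The hard part is not any single computation but the simultaneous bookkeeping: one must produce, on this covering, connections for $\N_{\F,M}$ whose Bott difference construction gives a representative that is \emph{both} cohomologous to $\phi(\N_{\F,M})$ \emph{and} identically zero away from $\Sigma$, which in turn hinges on the full-form vanishing of $\phi(\nabla_0)$ along the $l$ partial directions. Once the flat partial connection of Corollary \ref{CorollaryHolomorphicAction} is in hand on $S\setminus\Sigma$, this is the standard mechanism of \cite{Suwa} (cf.\ Proposition \ref{Prop:CompatibleSequence}), and the only genuinely new input is the existence of that connection, already supplied by Sections \ref{Sec:Frobenius} and \ref{SecAtiyah}.
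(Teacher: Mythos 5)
Your proposal is correct and follows essentially the same route as the paper's own proof: invoke Corollary \ref{CorollaryHolomorphicAction} on $S\setminus S(\F)$ to get the flat partial holomorphic connection on $\N_{\F,M}$ along $\F|_S$, apply the flat/involutive branch of Bott's Vanishing Theorem \ref{BottVanThe} with $\dim S=n-m$ to get form-level vanishing in degrees $k>n-m-l$, and then run the localization machinery of Section \ref{SectionVanishing}. The paper's proof is terser (it additionally notes that $\N_{\F,M}$ corresponds to the virtual bundle $[TM|_S-F|_S]$, which is an honest bundle off $S(\F)$, and then simply cites the localization process), while you spell out the lifting to relative cohomology, the dualities, and the \u{C}ech--de~Rham support argument that the paper leaves implicit.
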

\begin{proof}
If we denote by $F$ the vector bundle associated to $\F$ we have that the virtual bundle associated
with the sheaf $\N_{\F,M}$ is nothing else but $[TM|_S-F|_S]$.
Now, outside the singularity set of $\F$, this virtual bundle is a vector bundle on $S$ and by
Corollary \ref{CorollaryHolomorphicAction} it admits a partial holomorphic connection along $\F|_S$;
Bott Vanishing Theorem tells us that for each $\phi$ of degree $k$ larger than $n-m-l$ we have that
the restriction of $\phi(\N_{\F,M})$ to $S\setminus S(\F)$ is represented by the $0$ form.
Applying the localization process as in Section \ref{SectionVanishing} the result follows.
\end{proof}
Using now the results of Section \ref{SectionNonInvolutive} we can prove a stronger result.
\begin{Theorem}
Let $S$ be a codimension $m$ compact submanifold of a $n$ dimensional complex manifold $M$. Let $\F$
be a foliation on $S$ and let $\E$ be a rank $l$ subsheaf of $\T_{S(1)}$ that, restricted to
$S$, is a subsheaf of $\F$. Suppose moreover that it is $S$-faithful.
Let $\Sigma=S(\F)\cup S(\E)$ and let $\Sigma=\bigcup_{\lambda}\Sigma_{\lambda}$ be the decomposition
of $\Sigma$ in
connected components.
Then for every symmetric homogeneous polynomial $\phi$ of degree $k$ larger than
$n-m-l+\lfloor l/2 \rfloor$ we can
define the residue
$\Res_{\phi}(\E,\N_{\F,M};\Sigma_{\lambda})\in H_{2(n-m-k)}(\Sigma_{\alpha})$ depending only on the
local behaviour of
$\F$ and $\N_{\F,M}$ near $\Sigma_{\lambda}$ such that:
\[\sum_{\lambda} \Res_{\phi}(\E,\N_{\F,M};\Sigma_{\lambda})=\int_S \phi(\N_{\F,M}),\]
where $\phi(\N_{\F,M})$ is the evaluation of $\phi$ on the Chern classes of $\N_{\F,M}$.
\end{Theorem}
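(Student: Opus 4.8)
The plan is to follow the localization argument of Theorem \ref{Theo:IndexFoliations} almost verbatim, replacing the flat connection supplied by Corollary \ref{CorollaryHolomorphicAction} with the partial holomorphic connection produced by Proposition \ref{Prop:NonInvolutive}, and correspondingly replacing the involutive (flat) branch of Bott's Vanishing Theorem \ref{BottVanThe} by its general non-involutive branch. As in the previous theorem, the virtual bundle underlying the sheaf $\N_{\F,M}$ is $[TM|_S-F|_S]$, where $F$ denotes the bundle associated with $\F$; outside the analytic set $\Sigma=S(\F)\cup S(\E)$ this virtual bundle is an honest vector bundle on $S$.

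First I would verify that on $S\setminus\Sigma$ the hypotheses of the non-involutive form of Bott's theorem are met. Since $\E$ is $S$-faithful of rank $l$, its restriction $\E|_S$ is a rank $l$ coherent subsheaf of $\F\subseteq\T_S$, and once the singular loci $S(\E)$ and $S(\F)$ have been removed it is a genuine rank $l$ subbundle of $TS$. By Proposition \ref{Prop:NonInvolutive} there is then a partial holomorphic connection $(\delta,\E)$ on $\N_{\F,M}$ along $\E|_S$, given explicitly by $\delta_v(s)=\tilde{X}_{\pi(\tilde{v})}(s)$. Because $\E$ need not be involutive, this connection need not be flat, so I may only invoke the first bullet of Theorem \ref{BottVanThe}: every symmetric homogeneous polynomial $\phi$ of degree $k>\dim S-l+\lfloor l/2\rfloor=n-m-l+\lfloor l/2\rfloor$ has $\phi(\N_{\F,M})$ represented by the zero form on $S\setminus\Sigma$, and not merely zero in cohomology.

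With the vanishing on $S\setminus\Sigma$ in hand, I would run the localization machinery of Section \ref{SectionVanishing}. Writing $\Sigma=\bigcup_\lambda\Sigma_\lambda$ for the decomposition into connected components, the class $\phi(\N_{\F,M})$ lifts to $H^{2k}(S,S\setminus\Sigma)$; applying Poincar\'e duality on the compact $S$ together with Alexander duality on each $\Sigma_\lambda$ produces residues $\Res_\phi(\E,\N_{\F,M};\Sigma_\lambda)\in H_{2(n-m-k)}(\Sigma_\lambda)$ whose sum equals $\int_S\phi(\N_{\F,M})$. These residues depend only on the local data near each $\Sigma_\lambda$ because the \u{C}ech--de Rham representatives and the Bott difference forms used to build them are constructed locally.

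The main obstacle, and the only genuine difference from Theorem \ref{Theo:IndexFoliations}, is the loss of flatness: without involutivity of $\E$ the curvature of $(\delta,\E)$ does not vanish, which is precisely what forces the vanishing threshold up from $n-m-l$ to $n-m-l+\lfloor l/2\rfloor$. Once one settles for this weaker threshold the remainder is routine; the delicate point to check carefully is that $\E|_S$ is still a rank $l$ subbundle of $TS$ on all of $S\setminus\Sigma$, so that the rank entering Bott's estimate is indeed $l$ and $\Sigma$ genuinely captures every point where either $\N_{\F,M}$ degenerates or $\E|_S$ drops rank.
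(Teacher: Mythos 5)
Your proposal is correct and takes essentially the same route as the paper: the paper supplies no separate argument for this theorem beyond the remark that it follows from Section \ref{SectionNonInvolutive}, i.e., one repeats the proof of Theorem \ref{Theo:IndexFoliations} with the flat connection of Corollary \ref{CorollaryHolomorphicAction} replaced by the possibly non-flat partial holomorphic connection of Proposition \ref{Prop:NonInvolutive}, invoking the non-involutive branch of Theorem \ref{BottVanThe}, which is exactly what you do. Your handling of the raised degree threshold $n-m-l+\lfloor l/2\rfloor$ and of the localization machinery of Section \ref{SectionVanishing} matches the paper's intended argument.
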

\begin{Remark}
Please note that by Corollary \ref{Cor:InvolutiveSubsheaves} if $\E$ is involutive the above
holds with $n-m-l$ instead of $n-m-l+\lfloor l/2 \rfloor$.
\end{Remark}
Suppose now we have a foliation of $M$, transversal to $S$, $2$-splitting submanifold of $M$, and
suppose we have a
first order
$\F$-faithful splitting $\sigma_2$ outside an algebraic subset. Now, the foliation $\F^{\sigma_2}$
is a foliation of the first infinitesimal neighborhood of $S$ and by
Theorem \ref{Theo:IndexFoliations} we have the following.
\begin{Theorem}
Let $S$ be a codimension $m$ $2$-splitting compact submanifold of a $n$ dimensional complex
manifold $M$. Let $\F$
be a rank $l$ holomorphic foliation defined on a neighborhood of $S$. Suppose there is a
$2$-splitting first order
$\F$-faithful outside an analytic subset $\Sigma$ of $U$ containing $S(\F)\cap S$ and that $S$
is not contained in
$\Sigma$. Let
$\Sigma=\bigcup_{\lambda}\Sigma_{\lambda}$ be the decomposition of $\Sigma$ in connected components.
Then for every symmetric homogeneous polynomial $\phi$ of degree $k$ bigger than $n-m-l$ we can
define the residue 
$\Res_{\phi}(\F,\N_{\F^{\sigma},M};\Sigma_{\lambda})\in H_{2(n-m-k)}(\Sigma_{\alpha})$ depending
only on the
local behaviour of
$\F$ and $\N_{\F^{\sigma},M}$ near $\Sigma_{\lambda}$ such that:
\[\sum_{\lambda} \Res_{\phi}(\F,\N_{\F^{\sigma},M};\Sigma_{\lambda})=\int_S
\phi(\N_{\F^{\sigma},M}),\]
where $\phi(\N_{\F^{\sigma},M})$ is the evaluation of $\phi$ on the Chern classes of
$\N_{\F^{\sigma},M}$.
\end{Theorem}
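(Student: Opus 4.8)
The plan is to reduce the statement to Theorem \ref{Theo:IndexFoliations} applied to the projected foliation $\F^{\sigma}$ on $S$. First I would invoke Lemma \ref{2Split}: since $S$ is $2$-splitting, the sequence \eqref{2split} splits and produces the splitting morphism $\sigma_2^*:\T_{M,S(1)}\to\T_{S(1)}$, which by hypothesis is first order $\F$-faithful outside $\Sigma$. Unwinding this definition, $\F^{\sigma_2}=\sigma_2^*(\F|_{S(1)})$ is a regular rank $l$ foliation of $S(1)$ on $S\setminus\Sigma$; in particular its restriction $\F^{\sigma}=\F^{\sigma_2}|_S=\sigma^*(\F|_S)$ is a rank $l$ foliation of $S$ there, and $\F^{\sigma_2}$ is precisely a foliation of the first infinitesimal neighborhood of $S\setminus\Sigma$ in the sense of Definition \ref{Def:FolInf} whose restriction to $S$ is $\F^{\sigma}$.

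The next step is to observe that $\F^{\sigma}$ is therefore a rank $l$ holomorphic foliation of $S$ with $S(\F^{\sigma})\subseteq\Sigma$ which extends to the first infinitesimal neighborhood of $S\setminus\Sigma$ (the extension being $\F^{\sigma_2}$). By Corollary \ref{CorollaryHolomorphicAction}, such an extension furnishes a flat partial holomorphic connection $(\delta,\F^{\sigma})$ on $\N_{\F^{\sigma},M}$ along $\F^{\sigma}$ over $S\setminus\Sigma$. As $\F^{\sigma}$ is involutive and the connection is flat, the involutive case of the Bott Vanishing Theorem \ref{BottVanThe} shows that every symmetric homogeneous polynomial $\phi$ of degree $k>\dim S-l=n-m-l$, evaluated on the Chern classes of $\N_{\F^{\sigma},M}$, vanishes as a form on $S\setminus\Sigma$.

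I would then run the localization machinery of Section \ref{SectionVanishing} verbatim with $\eta=\phi(\N_{\F^{\sigma},M})$: since $\eta$ vanishes on $S\setminus\Sigma$ it lifts to a class in $H^{2k}(S,S\setminus\Sigma)$, and using the compactness of $S$ together with Poincar\'e and Alexander duality one decomposes along the connected components $\Sigma_{\lambda}$ to define $\Res_{\phi}(\F,\N_{\F^{\sigma},M};\Sigma_{\lambda})\in H_{2(n-m-k)}(\Sigma_{\lambda})$ and obtain the residue formula. Concretely, the entire argument packages as follows: the hypotheses here guarantee that $\F^{\sigma}$ satisfies the hypotheses of Theorem \ref{Theo:IndexFoliations} with $S(\F^{\sigma})\subseteq\Sigma$, so the conclusion is exactly that theorem applied to $\F^{\sigma}$.

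The main obstacle, and essentially the only new content, is the bookkeeping of the first two steps: one must check that first order $\F$-faithfulness of $\sigma_2^*$ genuinely produces a foliation of the first infinitesimal neighborhood of $S\setminus\Sigma$ restricting to $\F^{\sigma}$, the $\F$-faithfulness being exactly what prevents the projected sheaf $\F^{\sigma}$ from dropping rank. A minor point to verify is that we localize along the whole of $\Sigma$ rather than the possibly smaller $S(\F^{\sigma})$; this is harmless, since the flat partial holomorphic connection exists over all of $S\setminus\Sigma\subseteq S\setminus S(\F^{\sigma})$, so $\phi(\N_{\F^{\sigma},M})$ already vanishes on $S\setminus\Sigma$, the lift to $H^{2k}(S,S\setminus\Sigma)$ is available, and the components of $\Sigma$ carry the residues, those over which $\F^{\sigma}$ happens to be regular simply contributing zero.
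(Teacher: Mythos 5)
Your proposal is correct and follows essentially the same route as the paper: the paper also deduces this theorem by observing that first order $\F$-faithfulness of the $2$-splitting makes $\F^{\sigma_2}$ a foliation of the first infinitesimal neighborhood of $S$ outside $\Sigma$, restricting to $\F^{\sigma}$, and then invoking Theorem \ref{Theo:IndexFoliations}. Your unpacking of that theorem's proof (Corollary \ref{CorollaryHolomorphicAction}, the involutive case of Theorem \ref{BottVanThe}, and the localization of Section \ref{SectionVanishing}), as well as the remark that localizing along all of $\Sigma$ rather than $S(\F^{\sigma})$ is harmless, only makes explicit what the paper leaves implicit.
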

\begin{Remark}\label{Rem:Application}
An interesting research path is to investigate the relation between $\N_{\F^{\sigma},M}$ and
$\N_{\F}|_S$.
The motivation behind this question is easily seen: suppose $M$ is a complex surface and $\F$ is a
dimension $1$ singular foliation transversal to $S$, a $2$-splitting $1$ dimensional  submanifold.
Suppose moreover that the sequence
\[\xymatrix{0\ar[r]& \F\ar[r]&\T_M\ar[r]&\N_{\F}\ar[r]& 0}\]
splits when restricted to $S$.
Suppose we have a first order $\F$-faithful splitting $\sigma^*$ outside $\Sigma$; thanks
to the splitting of $S$ and the splitting of the sequence above $\sigma^*$ induces an isomorphism
between $\N_{\F}|_S$ and $\N_{\F^{\sigma},M}$ outside the singular points of $\F$.
Suppose $\F$ admits an algebraic compact leaf $L$.
If we denote by $\N_L$ the normal sheaf to this leaf we have that $\N_L\equiv\N_{\F}|_L$ and we have
that 
\[\int_S c_1(N_{\F^{\sigma,M}})=\int_S c_1(N_{\F}|_S)=\int_S c_1(N_L)=(L\cdot S)\]
is the intersection number between $L$ and $S$.
Therefore we could apply this test to foliations, getting informations on the intersection numbers
of possible analytic leaves.  
\end{Remark}
The other results follow from the splitting of the sequence \eqref{folest} studied in Section
\ref{SecExtension}. In case $\F$ has rank $1$ and we do not need to take care of involutivity we
have the following consequence of \ref{Theo:IndexFoliations}.
\begin{Theorem}
Let $S$ be a codimension $m$ compact submanifold splitting in an $n$ dimensional complex manifold
$M$, and suppose $\F$
is
a rank $1$ holomorphic foliation defined on $S$. Suppose sequence \eqref{folest} splits and let
$\Sigma=S(\F)$ and
let $\Sigma=\bigcup_{\lambda}\Sigma_{\lambda}$ be the decomposition
of $\Sigma$ in connected components.
Then for every symmetric homogeneous polynomial $\phi$ of degree $n-m$ we can define the residue
$\Res_{\phi}(\F,\N_{\F,M};\Sigma_{\lambda})\in H_{0}(\Sigma_{\alpha})$ depending only on the local
behaviour of
$\F$ and
$\N_{\F,M}$ near $\Sigma_{\lambda}$ such that:
\[\sum_{\lambda} \Res_{\phi}(\F,\N_{\F,M};\Sigma_{\lambda})=\int_S \phi(\N_{\F,M}),\]
where $\phi(\N_{\F,M})$ is the evaluation of $\phi$ on the Chern classes of $\N_{\F,M}$.
\end{Theorem}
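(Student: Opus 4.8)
The plan is to reduce the statement to Theorem \ref{Theo:IndexFoliations} with $l=1$, so that the only genuine content is to manufacture, out of the splitting hypothesis on \eqref{folest}, a bona fide foliation of the first infinitesimal neighborhood of $S\setminus S(\F)$. Once this is available the residues and the residue formula are produced verbatim by the localization machinery of Section \ref{SectionVanishing}, and the rank-one hypothesis is exactly what lets us avoid the $\lfloor l/2\rfloor$ correction and work with the sharp degree bound.

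First I would pass from the splitting of \eqref{folest} to the extension. By Lemma \ref{LemmaExtNonInv} the splitting of \eqref{folest} on the first infinitesimal neighborhood of the zero section forces $\F$ to extend as a coherent subsheaf of $\T_{S(1)}$; since $\F$ has rank $1$, this extension is automatically involutive (any rank one subsheaf is closed under the bracket of Lemma \ref{Lemma:Logaritmici}, by the usual $[fv,gv]=(f\,v(g)-g\,v(f))v$ computation), so it is in fact a rank $1$ foliation of the first infinitesimal neighborhood wherever $\F$ is regular, i.e.\ on $S\setminus S(\F)$. This is precisely the content of the rank one extension corollary of Section \ref{SecExtension}, whose normal-bundle half rests on Lemma \ref{LemmaFolEstNormImplFolEst} and on the splitting of $S$. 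The emphasis ``we do not need to take care of involutivity'' is what matters here: for $l=1$ the image of $\tilde{\F}/\V$ through the splitting is automatically a foliation, so we obtain a \emph{foliation} of $S(1)$ rather than merely a non-involutive subsheaf, and its singular locus is exactly $\Sigma=S(\F)$.

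With the extension in hand, Corollary \ref{CorollaryHolomorphicAction} furnishes a flat partial holomorphic connection $(\delta,\F)$ on $\N_{\F,M}$ along $\F|_S$ over $S\setminus\Sigma$. Since $\dim_{\mathbb{C}}S=n-m$ and $l=1$, Bott's Vanishing Theorem \ref{BottVanThe} in its flat, involutive form says that every symmetric polynomial in the Chern classes of $\N_{\F,M}$ of degree larger than $(n-m)-1$ vanishes on $S\setminus\Sigma$; a homogeneous $\phi$ of degree $n-m$ satisfies $n-m>(n-m)-1$, so $\phi(\N_{\F,M})$ is represented by the zero form off $\Sigma$. At this point the localization of Section \ref{SectionVanishing} applies directly: working with the virtual bundle $[TM|_S-F|_S]$ that represents $\N_{\F,M}$ (an honest bundle away from $\Sigma$), I would lift $\phi(\N_{\F,M})$ to $H^{2(n-m)}(S,S\setminus\Sigma)$, apply Alexander duality componentwise to land in $H_0(\Sigma_\lambda)$ — the correct degree since with $k=n-m$ one has $H_{2((n-m)-k)}(\Sigma_\lambda)=H_0(\Sigma_\lambda)$ — and read off $\sum_\lambda\Res_\phi(\F,\N_{\F,M};\Sigma_\lambda)=\int_S\phi(\N_{\F,M})$.

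In short, this is the $l=1$ instance of Theorem \ref{Theo:IndexFoliations}, the bookkeeping being the condition $k>n-m-l=n-m-1$ and the residue degree $H_0$. The step I expect to require the most care is the transfer of the splitting of \eqref{folest} — a statement over the first infinitesimal neighborhood of $S$ inside $N_S$ — to an actual extension of $\F$ over $S$ in $M$: this is where the splitting of $S$ and the isomorphism of first infinitesimal neighborhoods of Lemma \ref{LemmaFolEstNormImplFolEst} are indispensable, and where one must check that the extended foliation has singular locus exactly $\Sigma=S(\F)$, so that the localization collects no spurious residue contributions.
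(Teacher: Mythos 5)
Your proposal is correct and follows essentially the same route as the paper: the paper obtains this theorem precisely as the $l=1$ instance of Theorem \ref{Theo:IndexFoliations}, using the rank-one corollary of Section \ref{SecExtension} (splitting of \eqref{folest}, automatic involutivity in rank one, and transfer to $M$ via Lemma \ref{LemmaFolEstNormImplFolEst} under the splitting hypothesis on $S$) to produce the foliation of the first infinitesimal neighborhood of $S\setminus S(\F)$, after which Corollary \ref{CorollaryHolomorphicAction}, the flat case of Bott's Vanishing Theorem \ref{BottVanThe}, and the localization of Section \ref{SectionVanishing} give the residues and the residue formula. The only cosmetic difference is that you establish involutivity after passing to $\T_{S(1)}$ via Lemma \ref{LemmaExtNonInv}, whereas the paper's corollary notes it for the image inside $\T_{S_N(1)}$ before transferring; the two orderings are interchangeable.
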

Now, if $\F$ has rank $l$ and we suppose its extension arising from the splitting of \eqref{folest}
is involutive we have the following. 
\begin{Theorem}
Let $S$ be a codimension $m$ compact submanifold splitting in $M$, $n$ dimensional complex manifold,
and suppose $\F$ is
a rank $l$ holomorphic foliation defined on $S$. Suppose sequence \eqref{folest} splits and that the
image of
$\tilde{\F}/\mathcal{V}$ in $\tilde{\F}$ is involutive. Let $\Sigma=S(\F)$ and
let $\Sigma=\bigcup_{\lambda}\Sigma_{\lambda}$ be the decomposition
of $\Sigma$ in connected components.
Then for every symmetric homogeneous polynomial $\phi$ of degree $k$ larger than $n-m-l$ we can
define the residue
$\Res_{\phi}(\F,\N_{\F,M};\Sigma_{\lambda})\in H_{2(n-m-k)}(\Sigma_{\alpha})$ depending only on the
local behaviour of
$\F$ and
$\N_{\F,M}$ near $\Sigma_{\lambda}$ such that:
\[\sum_{\lambda} \Res_{\phi}(\F,\N_{\F,M};\Sigma_{\lambda})=\int_S \phi(\N_{\F,M}),\]
where $\phi(\N_{\F,M})$ is the evaluation of $\phi$ on the Chern classes of $\N_{\F,M}$.
\end{Theorem}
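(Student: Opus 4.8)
The plan is to reduce the statement to Theorem \ref{Theo:IndexFoliations}: once we know that $\F$ extends to the first infinitesimal neighborhood of $S\setminus S(\F)$, the residue formula is an immediate consequence of that theorem, and the whole argument parallels the rank $1$ case treated just before Corollary \ref{FirstOrder}, with the involutivity hypothesis replacing the automatic involutivity available in rank $1$.

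First I would observe that the two hypotheses — that sequence \eqref{folest} splits on the first infinitesimal neighborhood of $S$ embedded as the zero section of $N_S$, and that the image of $\tilde{\F}/\V$ in $\tilde{\F}$ is involutive — together assert exactly that this image is a rank $l$ involutive subbundle of $\T_{S_N(1)}$, that is, a foliation of the first infinitesimal neighborhood of $S$ as the zero section of $N_S$ in the sense of Definition \ref{Def:FolInf}. This is the single place where the involutivity assumption is spent. To check Definition \ref{Def:FolInf} on $S\setminus S(\F)$ I would use that the splitting of \eqref{folest} restricts to the identity on $S$, so that the image of $\tilde{\F}/\V$ restricted to $S$ is $\F$ itself, which is a genuine rank $l$ foliation of $S\setminus S(\F)$; this yields both the rank condition and the $\Ol_{S(1)}$-freeness of the quotient away from $S(\F)$.

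Next I would invoke Lemma \ref{LemmaFolEstNormImplFolEst}: since $S$ splits in $M$ and we have just produced a foliation of the first infinitesimal neighborhood of $S$ embedded as the zero section of its normal bundle, there exists a foliation of the first infinitesimal neighborhood of $S$ embedded in $M$. By construction this extension restricts to $\F$ on $S$, so $\F$ extends to the first infinitesimal neighborhood of $S\setminus S(\F)$ in the sense of Definition \ref{CondStar}. All hypotheses of Theorem \ref{Theo:IndexFoliations} are now satisfied with $\Sigma=S(\F)$, and its conclusion is precisely the stated formula for $\phi$ of degree $k>n-m-l$: Corollary \ref{CorollaryHolomorphicAction} supplies a flat partial holomorphic connection on $\N_{\F,M}$ along $\F|_S$ over $S\setminus S(\F)$, Bott's Vanishing Theorem \ref{BottVanThe} kills $\phi(\N_{\F,M})$ there, and the localization of Section \ref{SectionVanishing} produces the residues $\Res_{\phi}(\F,\N_{\F,M};\Sigma_\lambda)$ summing to $\int_S\phi(\N_{\F,M})$. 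The only genuinely delicate point in the argument is the first step — confirming that splitting together with involutivity delivers a foliation of the normal-bundle neighborhood satisfying Definition \ref{Def:FolInf} exactly on $S\setminus S(\F)$, rather than merely a subsheaf — but this is settled by the same rank and freeness bookkeeping already used in the rank $1$ case.
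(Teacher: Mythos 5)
Your proposal is correct and takes essentially the same route the paper intends: the paper gives no separate proof for this theorem, saying only that it follows from the splitting of \eqref{folest} studied in Section \ref{SecExtension} once the image of $\tilde{\F}/\mathcal{V}$ is assumed involutive, which is exactly your reduction — splitting plus involutivity produce a foliation of the first infinitesimal neighborhood of $S$ in $N_S$, Lemma \ref{LemmaFolEstNormImplFolEst} transfers it to a foliation of the first infinitesimal neighborhood of $S$ in $M$ away from $S(\F)$, and Theorem \ref{Theo:IndexFoliations} (via Corollary \ref{CorollaryHolomorphicAction}, Bott vanishing, and the localization of Section \ref{SectionVanishing}) yields the residue formula. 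Your bookkeeping of the rank and freeness conditions on $S\setminus S(\F)$ mirrors the paper's rank-$1$ corollary, of which this theorem is the stated generalization.
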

In case we drop the involutivity assumption we have a weaker form thanks to Theorem
\ref{Prop:NonInvolutive}.
\begin{Theorem}
Let $S$ be a codimension $m$ compact submanifold splitting in $M$ complex manifold of dimension $n$.
Suppose
$\F$ is a foliation of $S$ of rank $l$ and suppose sequence \eqref{folest} splits.
Let $\Sigma=S(\F)$ and let $\Sigma=\bigcup_{\lambda}\Sigma_{\lambda}$ be the decomposition of its
singular set in
connected
components. Then, for every symmetric homogeneous polynomial $\phi$ of degree $k$ larger than
$n-m-l+\lfloor l/2\rfloor$
we can define the residue $\Res_{\phi}(\F,\N_{\F,M};\Sigma_{\alpha})\in
H_{2(n-m-k)}(\Sigma_{\alpha})$ depending only on
the local behaviour of $\F$ and $\N_{\F,M}$ near $\Sigma_{\alpha}$ such that:
\[\sum_{\lambda}\Res_{\phi}(\F,\N_{\F,M};\Sigma_{\alpha})=\int_S \phi(\N_{\F,M}),\]
where $\phi(\N_{\F,M})$ is the evaluation of $\phi$ on the Chern classes of $\N_{\F,M}$.
\end{Theorem}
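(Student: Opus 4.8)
The plan is to reduce this statement to the non-involutive form of Bott's Vanishing Theorem \ref{BottVanThe} by feeding the splitting hypothesis through the extension machinery of Sections \ref{SecExtension} and \ref{SectionNonInvolutive}. First I would exploit that $S$ is splitting and that sequence \eqref{folest} splits: by Lemma \ref{LemmaExtNonInv}, this splitting on the first infinitesimal neighborhood of $S$ embedded as the zero section of $N_S$ produces a coherent subsheaf $\E$ of $\T_{S(1)}$ extending $\F$, with $\E|_S\simeq\F$. The whole point of the present statement, as opposed to the preceding theorem, is that I do \emph{not} assume the image of $\tilde{\F}/\V$ in $\tilde{\F}$ is involutive, so $\E$ need not be involutive either; only the restriction $\E|_S=\F$ is a foliation of $S$.

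Next I would verify that $\E$ satisfies the hypotheses of Proposition \ref{Prop:NonInvolutive}. Over $S\setminus\Sigma$, where $\Sigma=S(\F)$, the restriction $\E|_S=\F$ is a genuine rank $l$ foliation, so the restriction map $\E\to\E|_S$ is injective there and $\E$ is $S$-faithful. Proposition \ref{Prop:NonInvolutive} then furnishes a partial holomorphic connection $(\delta,\E)$ for $\N_{\F,M}$ along $\F=\E|_S$ over $S\setminus\Sigma$, exactly as in the proof of Theorem \ref{Theo:IndexFoliations} but without flatness. Since $\E$ is only a (possibly non-involutive) subsheaf, this connection is in general not flat, which is precisely what forces the degree threshold to carry the extra $\lfloor l/2\rfloor$.

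With the partial connection located off $\Sigma$, I would invoke the non-involutive clause of Theorem \ref{BottVanThe}, using $\dim S=n-m$: on $S\setminus\Sigma$ every symmetric homogeneous polynomial $\phi$ of degree $k>(n-m)-l+\lfloor l/2\rfloor$ evaluated on the Chern classes of $\N_{\F,M}$ is represented by the zero form. Finally I would run the localization argument of Section \ref{SectionVanishing} verbatim: since $\phi(\N_{\F,M})$ vanishes on $S\setminus\Sigma$ it lifts to $H^{2k}(S,S\setminus\Sigma)$, and compactness of $S$ lets me apply Poincar\'e duality $P_S$ together with Alexander duality $A_{\Sigma_\lambda}$ componentwise over the connected components, defining $\Res_{\phi}(\F,\N_{\F,M};\Sigma_\lambda)\in H_{2(n-m-k)}(\Sigma_\lambda)$ and yielding the asserted residue formula.

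The main obstacle I anticipate is the bookkeeping around the singular set $\Sigma=S(\F)$: one must check that the extension $\E$ from Lemma \ref{LemmaExtNonInv} is $S$-faithful and of full rank $l$ precisely on $S\setminus\Sigma$, so that Proposition \ref{Prop:NonInvolutive} and then Theorem \ref{BottVanThe} apply there with $\F$ an honest rank $l$ subbundle of $TS$. Away from $\Sigma$ everything is a genuine subbundle and the vanishing is clean, while the degeneration along each $\Sigma_\lambda$ is absorbed into the residues. Once the partial connection is produced off $\Sigma$, the remainder is the standard \u{C}ech--de Rham localization already set up in Section \ref{SectionVanishing}, so the only genuinely new ingredient relative to the involutive theorem is the weaker, non-flat Bott bound.
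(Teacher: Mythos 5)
Your proposal is correct and follows essentially the same route the paper intends: the paper states this theorem as a direct consequence of the splitting of sequence \eqref{folest} (via Lemma \ref{LemmaExtNonInv}, producing a possibly non-involutive extension $\E\subset\T_{S(1)}$ of $\F$), Proposition \ref{Prop:NonInvolutive} (giving the not-necessarily-flat partial holomorphic connection on $\N_{\F,M}$ along $\F$ off $S(\F)$, which is exactly where the $\lfloor l/2\rfloor$ correction enters through the non-involutive clause of Theorem \ref{BottVanThe}), and the localization procedure of Section \ref{SectionVanishing}. Your additional care about $S$-faithfulness and full rank of $\E|_S$ on $S\setminus\Sigma$ is the right bookkeeping and is consistent with the paper's construction in Remark \ref{Rem:ExtGenerators}.
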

In the case $S$ has first order extendable tangent bundle the vanishing of the cohomology class
associated to \eqref{folest} 
follows directly from Corollary \ref{FirstOrder}, but we cannot say anything about the involutivity
of this extension.
\begin{Theorem}
Let $S$ be a codimension $m$ compact submanifold splitting in an $n$ dimensional complex manifold
$M$, and with first
order extendable tangent bundle.
Let $\F$ be a rank $l$ holomorphic foliation defined on $S$. Let $\Sigma=S(\F)$ and let
$\Sigma=\bigcup_{\lambda}\Sigma_{\lambda}$ be the decomposition
of $\Sigma$ in connected components.
Then for every symmetric homogeneous polynomial $\phi$ of degree $k$ larger than
$n-m-l+\lfloor l/2 \rfloor$ we can
define the residue
$\Res_{\phi}(\F,\N_{\F,M};\Sigma_{\lambda})\in H_{2(n-m-k)}(\Sigma_{\alpha})$ depending only on the
local behaviour of
$\F$ and
$\N_{\F,M}$ near $\Sigma_{\lambda}$ such that:
\[\sum_{\lambda} \Res_{\phi}(\F,\N_{\F,M};\Sigma_{\lambda})=\int_S \phi(\N_{\F,M}),\]
where $\phi(\N_{\F,M})$ is the evaluation of $\phi$ on the Chern classes of $\N_{\F,M}$.
\end{Theorem}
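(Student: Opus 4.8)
The plan is to manufacture, on $S\setminus\Sigma$, a partial holomorphic connection on $\N_{\F,M}$ along $\F$ and then feed the resulting Bott vanishing into the localization machinery of Section \ref{SectionVanishing}, precisely as in Theorem \ref{Theo:IndexFoliations}. First I would restrict attention to $S\setminus\Sigma$, where $\F$ is a genuine rank $l$ regular foliation of $S$. Since $S$ is splitting and has first order extendable tangent bundle, Corollary \ref{FirstOrder} applies verbatim and yields an extension of $\F$ to a coherent subsheaf $\E$ of $\T_{S(1)}$ with $\E|_S=\F$ over $S\setminus\Sigma$. The explicit description of this extension in Remark \ref{Rem:ExtGenerators} shows that its local generators differ from the coordinate fields $\partial/\partial z^i_\alpha$ only by vertical terms whose coefficients lie in $\mathcal{I}_S/\mathcal{I}_S^2$; these terms vanish on restriction to $S$, so the generators restrict to the independent generators of $\F$ and $\E$ is therefore $S$-faithful.

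With this $S$-faithful extension in hand, Proposition \ref{Prop:NonInvolutive} supplies a partial holomorphic connection $(\delta,\F)$ on $\N_{\F,M}$ along $\F|_{S\setminus\Sigma}$, a rank $l$ subbundle of $\T_S$. The main obstacle---and the precise reason the degree bound reads $n-m-l+\lfloor l/2\rfloor$ rather than $n-m-l$---is that Corollary \ref{FirstOrder} gives no control whatsoever over the involutivity of $\E$, so the curvature computation of Proposition \ref{PropHolConnection} is unavailable and $\delta$ cannot be assumed flat. Hence only the first (non-involutive) clause of Bott's Vanishing Theorem \ref{BottVanThe} is at our disposal. I would apply it to the virtual bundle $[TM|_S-F|_S]$ attached to $\N_{\F,M}$, which is an honest vector bundle over $S\setminus\Sigma$, and conclude that for every symmetric homogeneous $\phi$ of degree $k>\dim S-l+\lfloor l/2\rfloor=n-m-l+\lfloor l/2\rfloor$ the form $\phi(\N_{\F,M})$ is represented by $0$ on $S\setminus\Sigma$.

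It then remains to run the localization procedure of Section \ref{SectionVanishing}. As $\phi(\N_{\F,M})$ vanishes off $\Sigma$, it lifts to a class in $H^{2k}(S,S\setminus\Sigma)$; pairing Poincar\'e duality $P_S$ with Alexander duality $A_{\Sigma_\lambda}$ on each connected component produces residues $\Res_\phi(\F,\N_{\F,M};\Sigma_\lambda)\in H_{2(n-m-k)}(\Sigma_\lambda)$ depending only on the local data of $\F$ and $\N_{\F,M}$ near $\Sigma_\lambda$, and summing over $\lambda$ gives $\sum_\lambda\Res_\phi(\F,\N_{\F,M};\Sigma_\lambda)=\int_S\phi(\N_{\F,M})$. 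Equivalently, since the proof of Corollary \ref{FirstOrder} shows that first order extendable tangent bundle forces the sequence \eqref{folest} to split, this statement is simply the instance of the preceding non-involutive index theorem obtained under that hypothesis.
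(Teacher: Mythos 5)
Your proposal is correct and follows essentially the same route the paper intends: Corollary \ref{FirstOrder} (splitting plus first order extendable tangent bundle) yields a possibly non-involutive extension of $\F$ to $\T_{S(1)}$ over $S\setminus\Sigma$, whence Proposition \ref{Prop:NonInvolutive} gives the partial holomorphic connection on $\N_{\F,M}$, the non-involutive clause of Theorem \ref{BottVanThe} gives vanishing in degrees above $n-m-l+\lfloor l/2\rfloor$, and the localization of Section \ref{SectionVanishing} produces the residues. Your closing observation that this is just the preceding non-involutive index theorem specialized via the splitting of \eqref{folest} is exactly the paper's own (implicit) argument, and your explicit check of $S$-faithfulness via Remark \ref{Rem:ExtGenerators} fills in a detail the paper leaves tacit.
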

\begin{Remark}
From the theory developed in Section \ref{SecExtension} it seems likely that, given a foliation
$\F$ of the first infinitesimal neighborhood and an involutive subsheaf $\G$ of rank $l$  of
$\F|_S$ this subsheaf extends to a subsheaf of $\F$, possibly non involutive. This does not give
rise to new index theorems, but is indeed worth noting and investigating. 
\end{Remark}
Another interesting result following from our theory is obtained by defining, for a coherent
subsheaf $\E$ of $\T_{S(1)}$, a natural object,
its involutive closure, the smallest involutive subsheaf containing $\E$.
Thanks to the machinery developed in Section
\ref{SectionNonInvolutive}, it is proved that the existence of $\E$ gives rise to vanishing theorems
for
its involutive closure.
\begin{Definition}
Let $\E$ be a coherent subsheaf of $\T_{S(1)}$ such that $\E|_S$ is non empty. We denote by
$\Sing(\E)$ the set $\{x\in S\mid
\T_{S(1)}/\E \textrm{is
not} \Ol_{S(1),x}-\textrm{free}\}.$ On $S\setminus \Sing(\E)$ we define the \textbf{involutive
closure} $\G$ of $\E$ in $S$ to
be the intersection of all the coherent involutive subsheaves of $\T_{S}$ containing $\E|_S$.
\end{Definition}
Recall that the intersection of coherent subsheaves of $\T_S$ is again a coherent subsheaf of
$\T_S$; now, $\G$ is involutive by definition and therefore gives rise to a foliation of $S$.
Clearly, $\E|_S$ is a subsheaf of $\G$ and we can apply Proposition \ref{Prop:NonInvolutive}
getting the following result.
\begin{Theorem}\label{Theorem:InvolutiveClosure}
Let $S$ be a codimension $m$ compact submanifold of $M$ complex manifold of dimension $n$. Suppose
$\E$ is a coherent
subsheaf of $\T_{S(1)}$ of rank $l$, whose
restriction $\E|_S$ has rank $l$. Let $\G$ be the involutive closure of $\E$ in $S$. Let
$\Sigma=S(\E)\cup S(\G)=\bigcup_{\alpha}\Sigma_{\alpha}$ be the decomposition of $\Sigma$
in
connected
components. Then, for every symmetric homogeneous polynomial $\phi$ of degree $k$ larger than
$n-m-l+\lfloor l/2\rfloor$
we can define the residue $\Res_{\phi}(\E|_S,\N_{\G,M};\Sigma_{\alpha})\in
H_{2(n-m-k)}(\Sigma_{\alpha})$ depending only
on
the local behaviour of $\E|_S$ and $\N_{\G,M}$ near $\Sigma_{\alpha}$ such that:
\[\sum_{\lambda}\Res_{\phi}(\E|_S,\N_{\G,M};\Sigma_{\alpha})=\int_S \phi(\N_{\G,M}),\]
where $\phi(\N_{\G,M})$ is the evaluation of $\phi$ on the Chern classes of $\N_{\G,M}$.
\end{Theorem}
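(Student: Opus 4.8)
The plan is to reduce the statement to the localization machinery of Section \ref{SectionVanishing}, the input being a partial holomorphic connection on $\N_{\G,M}$ produced by Proposition \ref{Prop:NonInvolutive}. First I would recall that, by construction, the involutive closure $\G$ is an involutive coherent subsheaf of $\T_S$ containing $\E|_S$, so on $S\setminus\Sing(\G)$ it is a regular foliation of $S$ and $\N_{\G,M}=\T_{M,S}/\G$ is a genuine holomorphic vector bundle there. Since $\E|_S$ is a subsheaf of $\G$ and $\E$ is a coherent subsheaf of $\T_{S(1)}$ of the same rank $l$ as its restriction, the restriction map $|_S\colon\E\to\E|_S$ does not kill any generator, which is the operational content of $S$-faithfulness required to invoke Proposition \ref{Prop:NonInvolutive} with the ambient foliation taken to be $\G$. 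Applying it over $S\setminus\Sigma$, where $\Sigma=S(\E)\cup S(\G)$ and both $\E|_S$ and $\G$ are regular, yields a partial holomorphic connection $(\delta,\E|_S)$ on $\N_{\G,M}$ along the rank-$l$ subbundle $\E|_S$.

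Next I would feed this connection into Bott's Vanishing Theorem in its non-involutive form (Theorem \ref{BottVanThe}). Here the rank of the subbundle along which the connection lives is the rank of $\E|_S$, namely $l$, while the ambient manifold is $S$ with $\dim S=n-m$, and the bundle carrying the connection is $\N_{\G,M}$. The theorem then forces every symmetric homogeneous polynomial $\phi$ of degree
\[k>\dim S-l+\lfloor l/2\rfloor=n-m-l+\lfloor l/2\rfloor\]
in the Chern classes of $\N_{\G,M}$ to vanish on $S\setminus\Sigma$; in fact, by the remark following Theorem \ref{BottVanThe}, the differential form representing $\phi(\N_{\G,M})$ vanishes identically there, not merely its cohomology class. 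It is essential that I use only the non-involutive clause: $\E|_S$ need not be involutive, its involutive closure $\G$ being generally of larger rank, so the sharper bound $n-m-l$ is unavailable unless $\E|_S$ happens already to be involutive, in which case Corollary \ref{Cor:InvolutiveSubsheaves} would improve the degree.

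With the vanishing of $\phi(\N_{\G,M})$ on $S\setminus\Sigma$ in hand, the final step is the localization procedure of Section \ref{SectionVanishing}. Because the representing form vanishes off $\Sigma$, the class lifts canonically, depending on $\delta$, to a relative class in $H^{2k}(S,S\setminus\Sigma)$; since $S$ is compact I can then combine Poincar\'e duality $P_S$ on $S$ with Alexander duality $A_{\Sigma_\alpha}$ on each connected component, using that the homology of $\Sigma$ decomposes as the direct sum over its components and that Alexander duality respects this splitting. This produces residues $\Res_\phi(\E|_S,\N_{\G,M};\Sigma_\alpha)\in H_{2(n-m-k)}(\Sigma_\alpha)$ depending only on the local data of $\E|_S$ and $\N_{\G,M}$ near each $\Sigma_\alpha$, and summing the resulting duality relation over the components gives
\[\sum_{\alpha}\Res_\phi(\E|_S,\N_{\G,M};\Sigma_\alpha)=\int_S\phi(\N_{\G,M}).\]

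I expect the main obstacle to be the legitimacy of applying Proposition \ref{Prop:NonInvolutive}: one must confirm that the rank hypotheses on $\E$ and $\E|_S$ genuinely deliver $S$-faithfulness, and that the involutive closure $\G$ is coherent and regular away from $\Sigma$ so that $\N_{\G,M}$ is locally free and carries the connection there. Everything downstream — the Bott vanishing and the duality bookkeeping — is then formal, the only point requiring care being the correct accounting of ranks: the degree bound is governed by $l$, the rank of $\E|_S$ supporting the connection, and not by the possibly larger rank of $\G$, whose normal bundle is the object whose Chern classes are being localized.
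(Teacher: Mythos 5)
Your proposal is correct and follows essentially the same route as the paper: the paper's (very terse) argument is precisely that the involutive closure $\G$ is a coherent involutive subsheaf of $\T_S$ containing $\E|_S$, hence a foliation of $S$ away from its singular set, so that Proposition \ref{Prop:NonInvolutive} applies with ambient foliation $\G$ to give a partial holomorphic connection on $\N_{\G,M}$ along $\E|_S$, after which the non-involutive clause of Theorem \ref{BottVanThe} (with degree bound governed by the rank $l$ of $\E|_S$) and the localization procedure of Section \ref{SectionVanishing} yield the residues and the residue formula. Your explicit attention to the rank bookkeeping and to where regularity of $\E|_S$ and $\G$ is needed simply spells out what the paper leaves implicit.
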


\section{Computing the residue in the simplest case}\label{SecCompResTang}
In this section we will compute the residue for a codimension $1$ foliation of the first
infinitesimal neighborhood of a codimension $1$ submanifold in a surface.
Let $(U_1,x,y)$ be a neighborhood of $0$ in $\mathbb{C}^2$, let $S=\{x=0\}$; let $\F$ be a foliation
of $S(1)$ such that $\Sing(\F)=\{0\}$ and let $v$ be a generator of $\F$; that is a holomorphic
section of
$\T_{S(1)}$ with an isolated singularity in $0$.
Supposing $\F$ reduced, from Section \ref{Sec:Reduced} and Remark \ref{Rem:Codimension}
we see that this is assumption does not give rise to a loss of generality for our computation.
\begin{Remark}\label{Rem:Computation}
Please note also that, if we denote by $\tilde{v}$ an extension of $v$ to $U_1$ and by
$\tilde{\F}$ the foliation generated by it, thanks to how we defined the holomorphic action and the
theory developed for local extensions, the computation of this residue could be reduced to the
computation of the residue given by the Lehmann-Khanedani-Suwa action of $\tilde{v}$ on
$\N_{\F}|_S$, which can be found e.g. in \cite[Ch. IV, Theorem 5.3]{Suwa}.
\end{Remark}
We will, anyway, compute the index explictly in the framework we developed.
Call $U_0:=U_1\setminus\{0\}$; with an abuse of notation we will also say $M:=U_1$.
Let $G$ be the trivial line bundle on $S$; we can see $v|_S$ as a holomorphic homomorphism
between $G$ and $TS$.
On $U_0$ we can see $G$ as a subbundle of $TM|_S$, moreover $G$ embedded through $v|_S$ is
nothing else that the bundle associated to $\F|_S$. Therefore, we can speak of the virtual bundle
$[TM|_S-G]$, which coincides, on $U_0$, with the normal bundle to the
foliation $\F|_{U_0\cap S}$ in the ambient tangent bundle $TM|_{U_0\cap S}$, denoted
by $N_{\F,M}$.
Since the only homogeneous symmetric polynomial in dimension $1$ is the trace we would like to
compute the residue for the first Chern class of $[TM|_S-G]$, whose sheaf of sections is
$\N_{\F,M}$. Being the first Chern class additive, we are going to compute $c_1(TM|_S)-c_1(G)$.
If $U_0$ is small enough, thanks to the embedding of $G$ into $TM|_S$ we have that on $U_0$ we can
see $TM|_S$ as the
direct sum $G\oplus N_{\F,M}$. 
We are going to apply Proposition \ref{Prop:CompatibleSequence} to the following sequence:
\[\xymatrix{0\ar[r] &F|_{U_0\cap S}\ar[r] &TM|_{U_0\cap S}\ar[r] & N_{\F,M}\ar[r] &0.}\]
We want to build on $U_0$ a family of connections compatible with the sequence, so that Theorem
\ref{BottVanishingTheoremforVirtualBundles} implies that $c_1(N_{\F,M})$ on $U_0$ is $0$.
We proved that the existence of a foliation of the first infinitesimal neighborhood gives rise to
partial connection on $N_{\F,M}$.
Now, thanks to Corollary \ref{CorollaryHolomorphicAction} we can compute the actual connection
matrix of this partial holomorphic connection on $\N_{\F,M}$ and extend
it to a connection on $N_{\F,M}$, denoted by $\nabla$. 
To build a family of connections simplifying our computations we take on $U_0\cap S$ the connection
$\nabla_0^G$ which is trivial with respect with the generator $1_G$ of the trivial line bundle $G$. 
Since $TM|_S$ on $U_0\cap S$ is the direct
sum of $G$ and $N_{\F,M}$ we let the connection for $TM|_S$ be the direct sum connection 
$\nabla_0^{TM}:=\nabla\oplus\nabla_0^G.$
Both $\nabla_0^{TM}$ and $\nabla_0^G$ are holomorphic connections along $F$, therefore we
can apply Bott's
Vanishing in the version for virtual bundles and obtain that $c_1(N_{\F,M})\equiv 0$ on $U_0$.

In \u{C}ech-de Rham cohomology relative to the cover $\{U_0,U_1\}$ the first Chern class of
$\N_{\F,M}$ is represented
as a triple $(\omega_0,\omega_1,\sigma_{01})$, where $\omega_0$ is the first Chern class of
$\N_{\F,M}$ on $U_0$,
$\omega_1$ is the first Chern class of $\N_{\F,M}$ on $U_1$ while $\sigma_{01}$ is a $1$-form, the
Bott
difference form, i.e., a $1$-form such that $\omega_1-\omega_0=d\sigma_{01}$ on $U_0\cap U_1$(for
a complete treatment, refer to \cite{Suwa}).
Due to the additivity of the first Chern class, to compute the first Chern class of
$\N_{\F,M}$ we need to
compute the first Chern classes of $G$ and $TM|_S$ on $U_1$ (we already know the first Chern class
of
$\N_{\F,M}$ on $U_0$ is $0$) and the Bott difference forms $c_1(\nabla^{TM}_0,\nabla^{TM}_1)$ and
$c_1(\nabla^{G}_0,\nabla^{G}_1)$. On
$U_1$ we can take, again, as a connection for $G$ the connection which is trivial with respect to
the
generator $1_G$ of $G$:
therefore $c_1(\nabla^{G}_0,\nabla^{G}_1)=0$, since the connections for $G$ on $U_0$ and $U_1$ are
the same.
On $U_1$ we take as $\nabla^{TM}_1$ the $\partial/\partial x$, $\partial/\partial
y$ trivial
connection; then $c_1(\nabla^{TM}_1)=0$ and the problem reduces to compute the Bott difference form
$c_1(\nabla^{TM}_0,\nabla^{TM}_1)$.
To compute it we need the connection matrix for $\nabla_0^{TM}$ with respect to the frame
$\partial/\partial x$, $\partial/\partial y$. First of all we compute the action of $\nabla$ on the
equivalence class
$\nu=[\partial/\partial
x]$ in $N_{\F,M}$.
The generator $v$ of $\F$ is written in coordinates as
\[[A]_2\frac{\partial}{\partial x}+[B]_2\frac{\partial}{\partial y},\]
where $[A]_2$ belongs to $\mathcal{I}_S/\mathcal{I}_S^2$. In the following, we shall denote by $v_S$
the restriction of
$\tilde{v}$ to $S$; in coordinates we have that $v_S=[B]_1\partial/\partial y$.
We compute now the action of $F$ on $\N_{\F,M}$, recalling Corollary
\ref{CorollaryHolomorphicAction}
\[\nabla_{v_S}(\nu)=\pr\bigg(\bigg[[A]_2\frac{\partial}{\partial x}+[B]_2\frac{\partial}{\partial
y},\frac{\partial}{\partial x}\bigg]\bigg|_S\bigg)=-\bigg[\frac{\partial A}{\partial
x}\bigg]_1\nu.\]
We compute now the connection matrix for $\nabla$. Since
\[-\bigg[\frac{\partial A}{\partial x}\bigg]_1=([C]_1\cdot dx+[D]_1\cdot
dy)([B]_1\frac{\partial}{\partial
y})=[D\cdot B]_1,\]
it follows that the connection matrix is nothing else but:
\[\omega=-\bigg[\frac{\partial A}{\partial x}\frac{1}{B}\bigg]_1 dy.\]
We have now all the tools needed to compute the connection matrix for $\nabla_0^{TM}$:
\begin{align*}
\nabla_0^{TM}\bigg(\frac{\partial}{\partial x}\bigg)&=\nabla(\nu)=-\bigg[\frac{\partial A}{\partial
x}\cdot\frac{1}{B}\bigg]_1 dy\otimes\frac{\partial}{\partial x},\\
\nabla_0^{TM}\bigg(\frac{\partial}{\partial y}\bigg)&=\nabla_0^G\bigg(\frac{1}{B}\cdot
v\bigg)=-\bigg[\frac{dB}{B^2}\bigg]_1\cdot
v=-\bigg[\frac{dB}{B}\bigg]_1\otimes\frac{\partial}{\partial y}.
\end{align*}
Thus the connection matrix has the following form:
\[
\begin{bmatrix}
-\bigg[\frac{\partial A}{\partial x}\frac{1}{B}\bigg]_1 dy & 0  \\
0 & -\bigg[\frac{dB}{B}\bigg]_1  \\ 
\end{bmatrix}.
\]
We can compute now the Bott difference form. We consider the bundle $TM\times [0,1]\to M\times
[0,1]$
and the connection $\tilde{\nabla}$ given by $\tilde{\nabla}:=(1-t)\nabla_0^{TM}+t\nabla_1^{TM}$.
The connection matrix for $\tilde{\nabla}$ is given by:
\[(1-t)\cdot
\begin{bmatrix}
-\bigg[\frac{\partial A}{\partial x}\frac{1}{B}\bigg]_1dy & 0  \\
0 & -\bigg[\frac{dB}{B}\bigg]_1  \\ 
\end{bmatrix}.
\]
The curvature matrix for $\tilde{\nabla}$ is:
\[
\begin{bmatrix}
dt\wedge\bigg[\frac{\partial A}{\partial x}\frac{1}{B}\bigg]_1 dy & 0  \\
0 & dt\wedge\bigg[\frac{dB}{B}\bigg]_1  \\ 
\end{bmatrix}.
\]
The Bott difference form is given by $\pi_* (c_1(\tilde{\nabla}))$ where $\pi_{*}$ is integration
along the fibre of the
projection $\pi:M\times [0,1]\to M$.
The Bott difference form is then:
\[\bigg[\frac{1}{B}\frac{\partial A}{\partial x}\bigg]_1 dy+\bigg[\frac{dB}{B}\bigg]_1.\]
So, the residue for $c_1(\N_{\F,M})$ in $0$ is
\[\frac{1}{2\pi \sqrt{-1}}\int_{\{x=0, |y|=\varepsilon\}} \bigg[\frac{1}{B}\bigg(\frac{\partial
A}{\partial x}+\frac{\partial
B}{\partial y}\bigg)\bigg]_1 dy.\]
\begin{Remark}
Already with slightly harder examples the computations of indices turn out to be really
complicated; please note that while Remark
\ref{Rem:Computation} tells us that in higher dimension the computation follows almost directly
from known results it would be interesting to compute the indices when dealing with singularities
that are not isolated.
\end{Remark}

\section{The residue for the simplest transversal case}
Let $(U_1,x,y)$ be a neighborhood of $0$ in $\mathbb{C}^2$, let $S=\{x=0\}$. Let now $v$ be
a holomorphic
section of
$\T_{M,S(1)}$ with an isolated singularity in $0$. As before, we call $U_0:=U_1\setminus\{0\}$ and
$M=U_1$.
Please remark that we drop the hypothesis about $v$ belonging to $\T_{S(1)}$.
We want to compute the variation index for such a foliation. Since the situation is local we can
assume we have a local
$2$ splitting, first order $\F$-faithful outside $0$ and that we are in a chart adapted to it and
therefore we have a
map $\T_{M,S(1)}$ to $\T_{S(1)}$.
Write $\tilde{v}$ in coordinates as:
\[\tilde{v}=[A(x,y)]_2\frac{\partial}{\partial x}+[B(x,y)]_2\frac{\partial}{\partial y}.\]
Now we can write $[A(x,y)]_2=[\tilde{\rho}([A(x,y)]_2)+R(x,y)]_2$, where $\tilde{\rho}$ is the
$\theta_1$ derivation associated to the $1$-splitting induced by the $2$-splitting; then,
\[\sigma^*(\tilde{v})=(\tilde{\rho}([A(x,y)]_2)\partial/\partial
x+B(x,y)\partial/\partial y.\]
Moreover, we have a splitting $\sigma^*:\T_{M,S}\to\T_S$, givings rise on $U_0\cap S$ to an
isomorphism between $\F_S$, the sheaf of germs of
sections of the
foliation generated by $v_S:=v|_S$ and the sheaf of germs of sections of $\F^{\sigma}.$
Now, the vector field
\[w=[\tilde{\rho}([A(x,y)]_2)]_2\partial/\partial x+[B(x,y)]_2\partial/\partial y\]
is a section of $\T_{S(1)}$, giving rise to a foliation of the first infinitesimal neighborhood.
We can now compute the index as in the former section: the residue for $c_1(N_{\F^{\sigma},M})$ is
therefore:
\begin{align}\frac{1}{2\pi \sqrt{-1}}\int_{\{|y|=\varepsilon\}}&
\bigg[\frac{1}{B}\bigg(\frac{\partial
[\tilde{\rho}([A]_2)]_2}{\partial
x}+\frac{\partial
B}{\partial y}\bigg)\bigg]_1 dy\nonumber\\
&=\frac{1}{2\pi \sqrt{-1}}\int_{\{|y|=\varepsilon\}}\bigg[\frac{1}{B}\bigg(\frac{\partial
}{\partial
x}\bigg(\frac{\partial A}{\partial x}\cdot x\bigg)+\frac{\partial
B}{\partial y}\bigg)\bigg]_1 dy\nonumber\\
&=\frac{1}{2\pi \sqrt{-1}}\int_{\{|y|=\varepsilon\}}
\bigg[\frac{1}{B}\bigg(\frac{\partial A}{\partial x}+\frac{\partial
B}{\partial y}\bigg)\bigg]_1 dy.\nonumber
\end{align}
\begin{Remark}
The term 
\[\frac{\partial^2 A}{\partial x^2}\cdot x\]
in the last computation disappears since it belongs to $\mathcal{I}_S$. 
\end{Remark}

\section{A couple of remarks about extendability of foliations}
In this short section we would like to summarize some of the results of this paper, stressing their
importance towards the understanding of the following problem: when is it possible to extend a holomorphic foliation on
a submanifold $S$ of codimension $m$ in a complex manifold $M$ to a neighborhood of $S$?
Thanks to Theorem \ref{TheoAtiyahSplitting} we know that, if there exists a rank $l$ foliation of the first
infinitesimal neighborhood, if we take any symmetric polynomial $\phi$ of degree larger than $n-m-l$ then
$\phi(\N_{\F,M})$ vanishes.
Therefore, given a foliation $\F$ on $S$, the classes $\phi(\N_{\F,M})$ are obstructions to find an extension to the
first infinitesimal neighborhood, where $\phi$ is a symmetric polynomial of degree larger than $n-m-l$.
In the splitting case we have much more information. As a matter of fact, if the sequence
\[0\to\mathcal{V}\to\tilde{\F}\to\tilde{\F}/\mathcal{V}\to 0\]
splits on the first infinitesimal neighborhood of the zero section of $\N_S$ we know that $\F$ can be extended in a non
involutive way. Therefore, if $S$ splits, the characteristic classes $\phi(\N_{\F,M})$ with $\phi$ is a symmetric
polynomial of degree larger than $n-m-l+\lfloor l/2\rfloor$ are obstructions to find an extension of $\F$ as a non
involutive subbundle of $\T_{S(1)}$.
Known this, if the extension is involutive, also the characteristic classes $\phi(\N_{\F,M})$ with $\phi$ a symmetric
polynomial of degree larger $n-m-l$ and smaller than $n-m-l+\lfloor l/2\rfloor$ vanish.
Therefore, in the splitting case, known that there is a non-involutive extension, the classes $\phi(\N_{\F,M})$ where
$\phi$ is a symmetric polynomial of degree larger $n-m-l$ and smaller than $n-m-l+\lfloor l/2\rfloor$ are
obstructions to find an involutive extension.

Another interesting remark can follow from a simple example; we look at the procedure we built in
section \ref{SecExtension} in the case we have a rank $1$ foliation $\F$ of a codimension $1$ submanifold $S$ in a
complex surface $M$.
We take an atlas adapted to $S$ and $\F$ for the normal bundle $\{V_{\alpha}\}$, supposing \eqref{folest} splits and
also an atlas $\{U_{\alpha}\}$ adapted to $S$ and $\F$ for $M$; now, on the normal bundle in such an atlas we take as
local lifts from $\tilde{\F}/\mathcal{V}$ to $\tilde{\F}$ on each $V_{\alpha}$ the maps 
\[\tau_{\alpha}:\partial_{2,\alpha}\mapsto \frac{\partial}{\partial z^2_{\alpha}},\]
therefore, the obstruction to the splitting of the sequence is represented by
$\{V_{\alpha\beta},\tau_{\beta}-\tau_{\alpha}\}$. Since the sequence splits we know there exists a cochain
$\{V_{\alpha},\sigma_{\alpha}\}$ belonging to
$C^0(\{V_{\alpha}\},\Hom(\tilde{\F}/\mathcal{V},\mathcal{V}))$ such that
$\{V_{\alpha\beta},\sigma_{\beta}-\sigma_{\alpha}\}=\{V_{\alpha\beta},\tau_{\beta}-\tau_{\alpha}\}.$
The isomorphism between $\tilde{F}\simeq \mathcal{V}\oplus \tilde{\F}/\mathcal{V}$ on $S_{N(1)}$ (the first
infinitesimal neighborhood of the embedding of $S$ as the zero section of the normal bundle) is then given on each
$V_{\alpha}$ by
\[(v,w)\mapsto v+\tau_{\alpha} (w)-\sigma_{\alpha}(w).\]
Now, if the map $\sigma_{\alpha}$ is given in coordinates as $c_{\alpha} \cdot\omega^2_{\alpha}\otimes \partial/\partial
v^1_{\alpha}$
the image of $\partial_{2,\alpha}$ is nothing else than
\[\frac{\partial}{\partial z^2_{\alpha}}-c_{\alpha} \frac{\partial}{\partial v^1_{\alpha}}.\]
Therefore, the local generators of the extension to the first infinitesimal neighborhood of the foliation $\F$ on $M$
are given on each $U_{\alpha}$ (modulo rescaling) by:
\[\frac{\partial}{\partial z^2_{\alpha}}-c_{\alpha} \frac{\partial}{\partial z^1_{\alpha}}.\]
As expected, recalling the computation of the residue in section \ref{SecCompResTang} we see that, if $\F$ has an
isolated singular point in $U_{\alpha}$, the computation of the residue depends on the function $c_{\alpha}$. 

All these remarks stress how extending a holomorphic foliation is an important global problem: the results of our paper
show how this problem is strictly connected with the residues and the characteristic classes of $\N_{\F,M}$.

\end{document}